\definecolor{cobalt}{RGB}{61,89,171}
\newcommand{\x}{\times}
\newcommand{\ox}{\otimes}
\newcommand{\RR}{\mathbb{R}}
\newcommand{\la}{\langle}
\newcommand{\ra}{\rangle}
\newcommand{\G}{\Gamma}
\newcommand{\vol}{\mathrm{vol}}
\newcommand{\R}{\mathbb{R}}
\newcommand{\lie}[1]{\mathfrak{#1}}     
\newcommand{\SU}{\mathrm{SU}}
\newcommand{\Gtwo}{\mathrm{G}_2}
\newcommand{\fg}{\mathfrak{g}}
\newcommand{\fh}{\mathfrak{h}}
\newcommand{\fn}{\mathfrak{n}}
\newcommand{\GL}{\mathrm{GL}}
\DeclareMathOperator{\tr}{tr}
\newtheorem{theorem}{Theorem}[section]
\newtheorem*{theorem*}{Theorem}
\newtheorem{lemma}[theorem]{Lemma}
\newtheorem{corollary}[theorem]{Corollary}
\newtheorem{proposition}[theorem]{Proposition}
\newtheorem{example}[theorem]{Example}
\newtheorem{conjecture}[theorem]{Conjecture}
\theoremstyle{remark}
\newtheorem{remark}[theorem]{Remark}
\begin{document}

\title{Purely coclosed $\Gtwo$-structures on nilmanifolds}

\author[G. Bazzoni]{Giovanni Bazzoni}
\address{Dipartimento di Scienza ed Alta Tecnologia, Universit\`a degli Studi dell'Insubria, Via Valleggio 11, 22100, Como, Italy}
\email{giovanni.bazzoni@uninsubria.it}

\author[A. Garv\'{\i}n]{Antonio Garv\'{\i}n}
\address{Departamento de Matem\'atica Aplicada, 
Escuela de Ingenier\'{\i}as industriales, Campus Teatinos, Universidad de M\'alaga, 
Apdo 59. 29080 M\'alaga, Spain}
\email{garvin@uma.es}

\author[V. Mu\~{n}oz]{Vicente Mu\~{n}oz}
\address{Departamento de \'Algebra, Geometr\'{\i}a y Topolog\'{\i}a, Universidad de M\'alaga, 
Campus de Teatinos, s/n, 29071 M\'alaga, Spain}
\email{vicente.munoz@uma.es}

\subjclass[2010]{Primary 53C15. Secondary 22E25, 53C38, 17B30.}
\keywords{Purely coclosed $\Gtwo$-structures, $\SU(3)$-structures, nilmanifolds}

\begin{abstract}
We classify $7$-dimensional nilpotent Lie groups, decomposable or of step at most 4, endowed with left-invariant purely coclosed $\Gtwo$-structures. This is done by going through the list of all $7$-dimensional nilpotent Lie algebras given by Gong \cite{Gong},
providing an example of a left-invariant $3$-form $\varphi$ which is a pure coclosed $\Gtwo$-structure (that is, it satisfies $d*\varphi=0$,
$\varphi \wedge d\varphi=0$) for those nilpotent Lie algebras that admit them; and by showing the impossibility
of having a purely coclosed $\Gtwo$-structure for the rest of them.
\end{abstract}

\maketitle

\section{Introduction}\label{sec:intro}

A $7$-dimensional smooth manifold $M$ admits a $\Gtwo$-structure if the structure group of its frame bundle reduces to the exceptional Lie group $\Gtwo \subset \mathrm{SO}(7)$. Equivalently (see \cite{Bryant}), $M$ admits a $\Gtwo$-structure if and only if it is orientable and spin.
Further, a $\Gtwo$-structure is equivalent to the existence of a positive $3$-form $\varphi$ (see Section \ref{tools} for details), which defines a unique Riemannian metric $g_{\varphi}$ and an orientation $\vol_\varphi$ on $M$. When $\varphi$ is parallel
with respect to the Levi-Civita connection of $g_\varphi$, then the identity component of its holonomy group is contained in $\Gtwo$; Fern\'andez and Gray proved that this happens if and only if $\varphi$ is closed and coclosed \cite{FernandezGray}. In this case, $g_\varphi$ is Ricci-flat. A $\Gtwo$-structure is called {\it closed} if $d\varphi=0$, and {\it coclosed} if $d*_\varphi\varphi=0$, where 
$*_\varphi$ is the Hodge star operator associated to $g_\varphi$ and $\vol_\varphi$. These two classes of $\Gtwo$-structures are very different in nature; for instance, the closed condition is quite restrictive (see the recent survey \cite{Fino-Raffero}), while coclosed $\Gtwo$-structures exist on any closed, oriented spin manifold, since they satisfy an $h$-principle, as proved by Crowley and Nordstr\"om in \cite[Theorem 1.8]{CN}. 

As it is the case for general $\mathrm{G}$-structures, the non-integrability of a $\Gtwo$-structure is governed by its intrisic torsion $\tau$, see \cite{Salamon}. In this particular case,
$\tau$ has four components $\tau_0$, $\tau_1$, $\tau_2$, and $\tau_3$, with $\tau_i\in\Omega^i(M)$, determined by the equations
\[
\left\{
\begin{array}{lcl}
d\varphi & = & \tau_0*_\varphi\varphi + 3~\tau_1\wedge \varphi + *_\varphi\tau_3 \\
d*_\varphi\varphi & = & 4\tau_1 \wedge *_\varphi\varphi + \tau_2\wedge \varphi 
\end{array}
\right.\,;
\]
see \cite[Proposition 1]{Bryant}. According to the vanishing of the various torsion components, one obtains sixteen classes of $\Gtwo$-structures, see \cite{FernandezGray}. We recognize closed $\Gtwo$-structures as those for which $\tau_0=\tau_1=\tau_3$; on the other hand, coclosed $\Gtwo$-structures are characterized by $\tau_1=\tau_2=0$. A $\Gtwo$-structure is of pure type if all the torsion components vanish, but one. Thus closed $\Gtwo$-structures are of pure type, while coclosed $\Gtwo$-structures are not. A $\Gtwo$-structure is locally conformally parallel (see \cite{IPP}) if $\tau_0=\tau_2=\tau_3=0$, that is, if $d\varphi=\tau_1\wedge\varphi$ and $d*_\varphi\varphi=\tau_1\wedge *_\varphi\varphi$. In this case, locally there is a function $f$ such that $e^{f} \varphi$ is a parallel $\Gtwo$-structure. Nearly parallel $\Gtwo$-structures (see \cite{FKMS}) are another important pure class; they are characterized by $\tau_1=\tau_2=\tau_3=0$, i.e.\ $d\varphi=\tau_0*_\varphi\varphi$, where $\tau_0$ is a constant. In this case, the induced metric $g_\varphi$ is Einstein with positive scalar curvature.

In this paper we focus on the last pure class of $\Gtwo$-structures, called {\em purely coclosed $\Gtwo$-structures}; these are 
given by the conditions $\tau_0=\tau_1=\tau_2=0$, that is, $d\varphi=*_\varphi\tau_3$ and $d*_\varphi\varphi=0$, or, 
equivalently \cite{BMR}, by
 $$
d*_\varphi\varphi=0 \quad \mathrm{and} \quad \varphi\wedge d\varphi=0\,;
 $$
clearly, they are a subclass of coclosed $\Gtwo$-structures. The second one is an equality of $7$-forms, hence it imposes a single extra condition. 
It is not clear whether there is an $h$-principle for purely coclosed $\Gtwo$-structures.

A nilmanifold is a compact quotient $M=\G\backslash G$, where $G$ is a connected, simply connected, nilpotent Lie group, and $\Gamma\subset\,G$ is a lattice. By Mal'cev Theorem \cite{Malcev}, a lattice $\Gamma\subset\,G$ exists if and only if the
Lie algebra $\lie{g}$ of $G$ has a basis with respect to which the structure constants are rational numbers. A nilmanifold is parallelizable, hence it is spin for any Riemannian metric.
Thanks to the aforementioned results, every nilmanifold has a coclosed $\Gtwo$-structure.

We are interested on nilmanifolds endowed with {\em left invariant} $\Gtwo$-structures. Since left-invariant differential forms on $\G\backslash G$ are uniquely determined by forms on $\lie{g}$, one can restrict the attention to $7$-dimensional real nilpotent Lie algebras, which have been classified by Gong in \cite{Gong}. Conti and Fern\'andez classified nilpotent Lie groups endowed with a left-invariant closed $\Gtwo$-structure, see \cite{CF}. Nilmanifolds can not carry locally conformally parallel $\Gtwo$-structures. Indeed, by a result of Ivanov, Parton and Piccinni \cite{IPP}, a compact manifold $M$ endowed with a locally conformally parallel $\Gtwo$-structure fibers over the circle with fiber a compact, simply connected 6-manifold, hence $b_1(M)=1$, while the first Betti number of a nilmanifold is at least $2$. Also, non-toral nilmanifolds can not have left-invariant nearly parallel $\Gtwo$-structures. In fact, as we noticed above, the induced metric is Einstein in this case, and this never happens for nilmanifolds, due to a result of Milnor \cite[Theorem 2.4]{Milnor}. As for left-invariant coclosed $\Gtwo$-structures, there is an unpublished classification by Bagaglini \cite{Bagaglini}, which does not seem to be complete. In \cite{BFF}, Bagaglini, Fern\'andez and Fino determined which nilpotent Lie groups admit left-invariant coclosed $\Gtwo$-structures in two cases: when the Lie algebra is decomposable, and when it is $2$-step. The authors also showed that one can always choose the $\Gtwo$-structure in such a way that the induced metric is a nilsoliton. In \cite{Freibert}, Freibert obtained all nilpotent almost-abelian Lie algebras with a coclosed $\Gtwo$-structure. In \cite{BMR}, del Barco, Moroianu and Raffero classified $2$-step nilpotent Lie groups admitting left-invariant purely coclosed invariant $\Gtwo$-structures. Their approach has a theoretical flavor, and does not rely on the classification of $7$-dimensional nilpotent Lie algebras. 

In this paper we study the existence of left-invariant purely coclosed $\Gtwo$-structures on $7$-dimensional decomposable nilpotent Lie groups and on indecomposable nilpotent Lie groups with nilpotency step $\leq 4$. We determine those which admit a left-invariant purely coclosed $\Gtwo$-structures. For this, we go by exhaustion through the list of nilpotent Lie algebras of \cite{Gong}. For each Lie group, in the positive case we provide an explicit example of a left-invariant purely coclosed $\Gtwo$-structure. In the negative case, we show that it is not possible to find such $\Gtwo$-structure by showing that there are suitable obstructions that forbid this to happen. 
The results are summarized in Theorems \ref{thm:decomposable-pure},
\ref{2step-indecomposable-pure}, \ref{3step-indecomposable-pure} and \ref{4step-indecomposable-pure}. In particular, we have the following

\begin{theorem*}\label{thm:main}
Every $7$-dimensional decomposable nilpotent Lie algebra admitting a coclosed $\Gtwo$-structure also admits a purely coclosed one, except for $\fh_3\oplus\RR^4$, where $\fh_3$ is the Heisenberg Lie algebra.
Every $7$-dimensional indecomposable nilpotent Lie algebra of nilpotency step $\leq 4$ admitting a coclosed $\Gtwo$-structure also admits a purely coclosed one.
\end{theorem*}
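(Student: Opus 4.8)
The plan is to establish this as a corollary of the four detailed classification theorems (Theorems \ref{thm:decomposable-pure}, \ref{2step-indecomposable-pure}, \ref{3step-indecomposable-pure}, \ref{4step-indecomposable-pure}), which together traverse Gong's list \cite{Gong} of all $7$-dimensional nilpotent Lie algebras in the relevant cases. The summary statement asserts two things: a comparison between the classes of algebras admitting coclosed versus purely coclosed $\Gtwo$-structures, with a single exceptional algebra $\fh_3\oplus\RR^4$ in the decomposable case; and the absence of any exception in the indecomposable step-$\leq 4$ case. Since every purely coclosed $\Gtwo$-structure is in particular coclosed, one inclusion of the comparison is automatic, so the real content is that (with the one stated exception) the converse also holds.

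**First I would** exhibit, for each Lie algebra on Gong's list that admits a purely coclosed $\Gtwo$-structure, an explicit left-invariant $3$-form $\varphi$ in terms of a rational basis $e^1,\dots,e^7$ of $\fg^*$, and verify directly that $\varphi$ is positive (so that it defines a metric $g_\varphi$ and Hodge star $*_\varphi$) and satisfies the two defining equations $d*_\varphi\varphi=0$ and $\varphi\wedge d\varphi=0$. These verifications are routine exterior-algebra computations once $\varphi$ is written down, using only the structure equations $de^i=\sum c^i_{jk}\,e^j\wedge e^k$ of the given algebra. For the comparison with coclosed structures, I would cross-reference the known coclosed classifications of \cite{BFF} and \cite{Freibert} for the decomposable and low-step cases, so that the positive cases of my purely-coclosed classification can be matched against the list of coclosed-admitting algebras.

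**The hard part will be** the negative direction: for those algebras that admit a coclosed $\Gtwo$-structure but no purely coclosed one, I must produce an obstruction ruling out the existence of any positive $\varphi$ satisfying both equations. The natural approach is to parametrize a general positive $3$-form adapted to the structure of $\fg$ — for instance, via an $\SU(3)$-reduction coming from the derived series or the center — and to show that the purely-coclosed system $d*_\varphi\varphi=0$, $\varphi\wedge d\varphi=0$ forces an incompatibility with the positivity of $\varphi$. Concretely, one expects an algebraic obstruction: solving $d*_\varphi\varphi=0$ constrains the coefficients of $\varphi$ enough that the additional $7$-form equation $\varphi\wedge d\varphi=0$ (a single scalar condition) can only be met when $g_\varphi$ degenerates. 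The genuinely delicate case is $\fh_3\oplus\RR^4$: here one must argue that the Heisenberg summand, contributing a single nontrivial structure equation, makes the coclosed and purely-coclosed conditions inequivalent, so that although coclosed structures exist, no positive $3$-form can simultaneously satisfy both purely-coclosed equations.

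**To conclude,** once each algebra has been placed in the correct column of the four theorems, the stated corollary follows by inspection: in the decomposable list every coclosed-admitting algebra except $\fh_3\oplus\RR^4$ appears among the purely-coclosed-admitting ones, and in the indecomposable step-$\leq 4$ list the two classes coincide exactly. I expect the bulk of the work, and the only conceptually subtle step, to lie in the obstruction arguments for the negative cases — especially in isolating the precise algebraic feature of $\fh_3\oplus\RR^4$ that obstructs purely coclosed structures while still permitting coclosed ones.
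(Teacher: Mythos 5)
Your overall architecture matches the paper's: exhaust Gong's list, exhibit explicit purely coclosed structures in the positive cases, and invoke obstructions in the negative ones. But there is a genuine gap in your treatment of the negative direction. You propose to ``cross-reference the known coclosed classifications of \cite{BFF} and \cite{Freibert}'' and then to obstruct the purely coclosed system $d*_\varphi\varphi=0$, $\varphi\wedge d\varphi=0$ on coclosed-admitting algebras. This mislocates where the work lies: \cite{BFF} classifies coclosed structures only for decomposable and $2$-step algebras, and \cite{Freibert} only for almost-abelian ones, so for the indecomposable $3$-step and $4$-step families no coclosed classification exists in the literature to cross-reference. The paper must (and does, in Section \ref{section:nococlosed}) prove from scratch that $23$ such algebras ($247E$, \dots, $357C$, $1357E$, $1357N(-2)$, $1457A$, $1457B$) admit \emph{no coclosed} $\Gtwo$-structure at all, using three obstructions to coclosedness (Corollary \ref{cor:obs3} and Corollary \ref{cor:lem2}, built on closed $4$-forms and $J$-invariant subspaces via Lemmas 3.3--3.4 of \cite{BFF}, plus Proposition \ref{prop:third}, an inequality on $\lambda(\psi_-)$). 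Your plan to rule out only purely coclosed structures on these algebras would not suffice: if any of them did admit a coclosed structure, it would be a further exception and the theorem as stated would be false. Apart from the single algebra $\fn_2=\fh_3\oplus\RR^4$, no case in the paper requires obstructing purely coclosed structures on a coclosed-admitting algebra.

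On the exceptional case itself, you correctly identify $\fh_3\oplus\RR^4$ as delicate, but the paper does not carry out the parametrization argument you sketch; it simply cites \cite[Corollary 4.3]{BMR}, where del Barco, Moroianu and Raffero settle the $2$-step case by exactly the kind of analysis you propose. Your from-scratch plan is therefore feasible in principle, but be aware the paper explicitly remarks that, beyond this ad hoc argument, no general obstruction to purely coclosed $\Gtwo$-structures is known --- so expect this single case to require a genuinely bespoke computation rather than a routine instance of a general method. Finally, a practical point on the positive cases: rather than writing down raw positive $3$-forms and checking positivity of $g_\varphi$ directly, the paper builds each example from $\SU(3)$-data $(\omega,\psi_-,\eta)$ with a central cooriented hyperplane via Theorem \ref{thm:construction}, reducing pure coclosedness to the three linear-in-derivative conditions $d\psi_-=0$, $\omega\wedge d\omega=\psi_-\wedge d\eta$, $\omega^2\wedge d\eta=-2\psi_+\wedge d\omega$, and positivity to positivity of the $\SU(3)$-metric; your direct approach is equivalent but computationally heavier, since verifying positivity of a generic $3$-form in dimension $7$ is the expensive step.
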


In future work we shall address the remaining nilpotency steps.

For the ease of the verification, we use SageMath worksheets \cite{Sage}. For each of the Lie algebras there is a single
worksheet. When the Lie algebra admits a left-invariant purely coclosed $\Gtwo$-structure, we provide the
relevant forms, and the list of commands that verify that it is purely coclosed $\Gtwo$-structure. In the negative case,
we provide a commented worksheet with all the steps that show the obstructions. In the former case, we use 
different obstructions depending on the case, we explain in the theoretical part of the paper how to use these worksheets.
The SageMath worksheets can be found at \cite{worksheets}.

\noindent {\bf Acknowledgements.} 
We thank Marisa Fern\'andez, Anna Fino and Alberto Raffero for useful comments and references. We have taken as a departing
point the (unfortunately incomplete) preprint of Leonardo Bagaglini \cite{Bagaglini}. Part of our search of forms has been
shortened by this.
The second and third author were partially supported MINECO (Spain) grant PGC2018-095448-B-I00.

\section{Generalities on $\Gtwo$-structures}\label{tools}

Let $V$ be a $7$-dimensional
vector space. Given a $3$-form $\varphi\in \Lambda^3(V^*)$, we define a symmetric bilinear form $b_\varphi\colon V \x V\to \Lambda^7(V^*)$ by $b_\varphi(x,y)\coloneqq \frac16 \imath_x\varphi \wedge \imath_y\varphi\wedge \varphi$. We have that
$\varphi$ is non-degenerate if $\epsilon(\varphi)\coloneqq(\det(b_\varphi))^{1/9} \neq 0$. Then 
 \begin{equation}\label{eq:metric$7$-dim}
 g_\varphi\coloneqq\epsilon(\varphi)^{-1} b_\varphi
 \end{equation} 
is also a symmetric bilinear form on $V$. If it is positive definite, then $\varphi$ is a
{\em positive} 3-form. By definition this is called a $\Gtwo$-form. Then, there is a $g_\varphi$-orthonormal frame $\{e_1,\ldots, e_7\}$ such that 
\[
\varphi=e^{127} +e^{347} +e^{567} +e^{135} -e^{146} -e^{236} -e^{245}\,, 
\]
where $\{e^1,\ldots, e^7\}$ is the dual coframe and $e^{ij}\coloneqq e^i\wedge e^j$, $e^{ijk}\coloneqq e^i\wedge e^j\wedge e^k$, and so on.

Recall that a $7$-dimensional smooth manifold $M$ is said to admit a $\Gtwo$-structure if there is a reduction of the structure group of its frame bundle from ${\GL}(7,\mathbb{R})$ to the exceptional Lie group $\Gtwo$, which can actually be viewed naturally as a subgroup of $\mathrm{SO}(7)$. 
Thus, a $\Gtwo$-structure determines a Riemannian metric and an orientation on $M$. In fact, the presence of a $\Gtwo$-structure is equivalent to the existence of a $3$-form $\varphi$ (the $\Gtwo$-form) on $M$, which is \emph{positive} on each tangent space $T_{p} M$, $p\in M$.

By \eqref{eq:metric$7$-dim}, a $\Gtwo$-form $\varphi$ induces both an orientation $\vol_\varphi$ and a Riemannian metric $g_{\varphi}$ on $M$, given by
\begin{equation}\label{metric}
6\, g_{\varphi} (X,Y)\, \vol_\varphi= \imath_X\varphi \wedge \imath_Y\varphi \wedge \varphi,
\end{equation}
for vector fields $X$, $Y$ on $M$. Let $*_\varphi$ be the Hodge star operator determined by $g_\varphi$ and $\vol_\varphi$. 
We say that a manifold $M$ has a {\em coclosed $\Gtwo$-structure}
if there is a $\Gtwo$-structure on $M$ such that the $\Gtwo$-form $\varphi$ is coclosed, that is, $d*_\varphi\varphi=0$. The $\Gtwo$-structure is called {\em purely coclosed} if, in addition, $\varphi\wedge d\varphi=0$.

\section{Linear $\SU(3)$-structures}

One way to understand $\Gtwo$-structures on $7$-manifolds is in terms of $\SU(3)$-structures on 6-manifolds. In fact, both structures can be described coherently using spinors, as it was shown in \cite{ACFH}.

Let $V$ be a $6$-dimensional 
vector space. Define the set
\[
\Lambda_0(V^*)=\{\omega \in \Lambda^2(V^*) \mid \omega^3  = 0\}\,.
\]
Given $\omega \in \Lambda_0(V^*)$, we orient $V$ declaring $\omega^3>0$.

For every $\tau\in \Lambda^3 (V^*)$, we have a map $k_\tau\colon V\to \Lambda^5(V^*)$ given by $k_\tau(x)=\imath_x\tau \wedge \tau$, where $\imath_x$ denotes contraction. Recall the natural isomorphism 
$V\ox \Lambda^6(V^*)\to \Lambda^5(V^*)$, given by $(v,o)\mapsto\imath_v o$. 
Its inverse is $\mu\colon \Lambda^5(V^*) \to V\ox \Lambda^6(V^*)$; if we fix a basis $\{v_1,\ldots,v_6\}$ of $V$, with dual basis $\{v^1,\ldots,v^6\}$, then $\mu(\xi)=\sum v_\ell \otimes (v^\ell \wedge \xi)$, for $\xi\in\Lambda^5(V^*)$. Composing $k_\tau$ with $\mu$ we obtain a map 
\[
 K_\tau\coloneqq \mu \circ k_\tau\colon V\to V\ox \Lambda^6(V^*)\,.
\]

In turn, this determines a function $\lambda\colon\Lambda^3(V^*) \to \left(\Lambda^6(V^*)\right)^{\ox 2}$ by
\[
 \lambda (\tau)=\frac16 \tr\big((K_\tau \otimes 1_{\Lambda^6(V^*)} )\circ K_\tau\big) \in \left(\Lambda^6(V^*)\right)^{\ox 2}\,.
\]
Set 
\[
\Lambda_\pm(V^*)\coloneqq\{ \tau\in \Lambda^3(V^*) \mid \pm \lambda(\tau)>0\}\,.
\]
Note that this condition is independent of orientations.

Take $V_e=\la e_1,e_2,e_3,e_4,e_5,e_6 \ra$, and consider copies $V_f=\la f_1,f_2,f_3,f_4,f_5,f_6 \ra$, $V_g=\la g_1,g_2,g_3,g_4,g_5,g_6 \ra$, and $V_h=\la h_1,h_2,h_3,h_4,h_5,h_6 \ra$. Given $\tau\in \Lambda^3(V^*)$, denote by $\tau_e$ (resp.~$\tau_f$, $\tau_g$, $\tau_h$) the corresponding elements in $\Lambda^3(V_e^*)$ (resp.~$\Lambda^3(V_f^*)$, $\Lambda^3(V_g^*)$, $\Lambda^3(V_h^*)$). 
A {\em $(1,2,3)$-shuffle} is a collection $\sigma=\{\{i\},\{j,k\},\{r,s,t\}\}$ which is a permutation of $\{1,\ldots,6\}$
 with $j<k$ and $r<s<t$. The {\em sign} of a $(1,2,3)$-shuffle $\sigma$, $(-1)^\sigma$, is its sign as permutation. Consider the element 
\[
C_{xyz}= \sum_{\sigma} (-1)^\sigma x^i \wedge y^j\wedge y^k \wedge z^r\wedge z^s\wedge z^t\,,
\]
for $x,y,z=e,f,g,h$, where the sum runs over all $(1,2,3)$-shuffles. We have the following:
 
\begin{proposition} \label{prop:Cefgh}
Let $\tau \in \Lambda^3(V^*)$. Then
\[
\tau_e\wedge C_{gfe} \wedge \tau_f\wedge \tau_h \wedge C_{fgh}\wedge \tau_g= -6\lambda(\tau) \, \vol_{efgh}
\]
\end{proposition}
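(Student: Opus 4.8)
The plan is to make $\lambda(\tau)$ completely explicit, to read each factor $C_{xyz}$ as an operation that ``splits'' the three slots of $\tau$ across the copies it names, and then to recognise the whole $24$-form as a trace. Fixing the basis $\{v_1,\dots,v_6\}$ and the volume $\vol=v^1\wedge\cdots\wedge v^6$, I would first define the endomorphism $M\in\End(V)$ by $v^\ell\wedge\imath_{v_a}\tau\wedge\tau=M^\ell_{\,a}\,\vol$. Then $k_\tau(v_a)=\imath_{v_a}\tau\wedge\tau$ and $\mu$ give $K_\tau(v_a)=\sum_\ell M^\ell_{\,a}\,v_\ell\otimes\vol$, that is $K_\tau=M\otimes\vol$; composing with itself and tracing yields $\lambda(\tau)=\tfrac16\tr(M^2)\,\vol\otimes\vol$. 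Hence, under the standard identification of $\vol\otimes\vol$ with the scalar $1$, the right-hand side equals $-\tr(M^2)\,\vol_{efgh}$, and the whole statement reduces to proving that the left-hand side is $-\tr(M^2)\,\vol_{efgh}$.

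Next I would establish the key lemma: if $z$ denotes the copy carrying the $3$-slot of $C_{xyz}$, then
\[
\tau_z\wedge C_{xyz}=-\,\vol_z\wedge T_{xy},\qquad T_{xy}:=\sum_i x^i\wedge(\imath_{v_i}\tau)_y,
\]
where $(\imath_{v_i}\tau)_y\in\Lambda^2(V_y^*)$ is $\imath_{v_i}\tau$ placed in the $y$-copy. Here the $z$-slot $z^r\wedge z^s\wedge z^t$ combines with $\tau_z$ into a multiple of $\vol_z$, and summing the shuffle signs is carried out by the contraction identity $\sum_{r,s,t}\epsilon^{ijkrst}\epsilon^{pqurst}=3!\,\delta^{ijk}_{pqu}$. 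The numerical factors $\tfrac{1}{2!\,3!}$ (passage from the shuffle sum to the free sum) and $\tfrac16$ (from $\tau$) combine with the two resulting $3!$'s to give exactly $T_{xy}$, while the global sign $-1$ comes from moving the degree-$3$ form $\tau_z$ past the degree-$3$ piece $x^i\wedge y^{jk}$. Applying this to both factors gives $\tau_e\wedge C_{gfe}=-\vol_e\wedge T_{gf}$ and $\tau_h\wedge C_{fgh}=-\vol_h\wedge T_{fg}$.

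Then I would substitute and collapse to the trace. Since $\vol_e$ and $\vol_h$ have even degree they pull out freely, leaving
\[
\tau_e\wedge C_{gfe}\wedge\tau_f\wedge\tau_h\wedge C_{fgh}\wedge\tau_g=\vol_e\wedge\vol_h\wedge\big(T_{gf}\wedge\tau_f\wedge T_{fg}\wedge\tau_g\big).
\]
Reorganising the last bracket into its $f$- and $g$-parts (one further reordering, whose sign is $+1$), each copy becomes a single $6$-form: the $f$-block is $(\imath_{v_i}\tau)\wedge\tau\wedge f^{i'}=-M^{i'}_{\,i}\,\vol_f$ and the $g$-block is $g^i\wedge(\imath_{v_{i'}}\tau)\wedge\tau=M^{i}_{\,i'}\,\vol_g$, directly from the definition of $M$ (the extra $-1$ in the $f$-block is from moving the $1$-form $f^{i'}$ past a $5$-form). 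Summing over $i,i'$ gives $T_{gf}\wedge\tau_f\wedge T_{fg}\wedge\tau_g=-\tr(M^2)\,\vol_f\wedge\vol_g$, and since all the $\vol$'s commute, $\vol_e\wedge\vol_h\wedge\vol_f\wedge\vol_g=\vol_{efgh}$; thus the left-hand side is $-\tr(M^2)\,\vol_{efgh}$, as required.

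The main obstacle is purely the bookkeeping of signs and constants. One must check that the two applications of the lemma contribute $(-1)(-1)=+1$, that the internal reordering of the bracket is $+1$, and that the single $-1$ from the $f$-block is what survives to give $-\tr(M^2)$; and, more importantly, that summing the shuffle signs reproduces the \emph{full} double contraction $\tr(M^2)=\sum_{i,i'}M^{i'}_{\,i}M^{i}_{\,i'}$ (rather than, say, $\tr(M)^2$), with the $\tfrac16$ of $\lambda$ cancelling the $6$ to leave the stated constant $-6$. Alternatively, one could observe that both sides are $\GL(V)$-covariant quartic polynomials in $\tau$ valued in $\Lambda^{24}$, hence proportional, and fix the constant on a single nondegenerate $\tau$; but the direct route above already pins everything down.
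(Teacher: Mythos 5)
Your proof is correct and takes essentially the same route as the paper: your key lemma $\tau_z\wedge C_{xyz}=-\vol_z\wedge T_{xy}$ is precisely the paper's identity $C_{gfe}\wedge\tau_e=-\sum_{\ell}g^\ell\wedge\imath_{f_\ell}\tau_f\wedge\vol_e$, your matrix $M$ is the paper's $(\kappa_{\ell m})$ for $K_\tau$, and the substitution, sign bookkeeping, and collapse to $-\tr(M^2)\,\vol_{efgh}=-6\lambda(\tau)\,\vol_{efgh}$ reproduce the paper's computation step for step. The only difference is that you sketch a proof of the key lemma, which the paper simply asserts as an ``easy equality''.
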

  
\begin{proof}
There is an easy equality 
\[
C_{gfe} \wedge \tau_e=-\sum_{\ell=1}^6 g^\ell\wedge \imath_{f_\ell} \tau_f\wedge\vol_e\,,
\]
from which one has
 \begin{equation}\label{eqn:formula}
 \tau_e \wedge C_{gfe} \wedge\tau_f= -\sum_{\ell=1}^6 g^\ell\wedge k_{\tau_f}(f_\ell)\wedge\vol_e\,.
 \end{equation}
This implies
\begin{align*}
\tau_e\wedge C_{gfe} \wedge \tau_f\wedge \tau_h \wedge C_{fgh}\wedge \tau_g&=\sum_{\ell,m=1}^6g^\ell\wedge k_{\tau_f}(f_\ell)\wedge f^m\wedge k_{\tau_g}(g_m)\wedge\vol_{eh}\\
&=-\sum_{\ell,m=1}^6g^\ell\wedge k_{\tau_g}(g_m) \wedge f^m\wedge k_{\tau_f}(f_\ell)\wedge\vol_{eh}\,.
\end{align*}
Recall that $K_\tau=\mu\circ k_\tau$, so that $K_{\tau_g}(g_m)= \sum g_\ell\otimes g^\ell \wedge k_{\tau_g}(g_m)$. Hence $\kappa_{\ell m}$, the entry $(\ell,m)$ of the matrix of $K_{\tau_g}$, is determined by $\kappa_{\ell m} \vol_g=g^\ell \wedge k_{\tau_g}(g_m)$. 
This implies that the above sum is 
\[
-\sum_{\ell,m=1}^6g^\ell\wedge k_{\tau_g}(g_m) \wedge f^m\wedge k_{\tau_f}(f_\ell)=
- \sum_{\ell,m=1}^6 \kappa_{\ell m}\kappa_{m \ell} \vol_{gf} =
 - \tr(K_\tau^2)\vol_{gf}=-6\lambda(\tau)\vol_{gf}\,,
\]
and the result follows.
\end{proof}

We implement this computation in SageMath as follows:

{\tiny

\noindent \hspace{1mm} \begin{verb}
D.<e1,e2,e3,e4,e5,e6,f1,f2,f3,f4,f5,f6,g1,g2,g3,g4,g5,g6,h1,h2,h3,h4,h5,h6> = GradedCommutativeAlgebra(QQ)
\end{verb}

\noindent \hspace{1mm} \begin{verb}
N=D.cdg_algebra({})
\end{verb}

\noindent \hspace{1mm} \begin{verb}
N.inject_variables()
\end{verb}

\noindent \hspace{1mm} \begin{verb}
psie=-e2*e4*e6+e1*e3*e6+e1*e4*e5+e2*e3*e5
\end{verb}

\noindent \hspace{1mm} \begin{verb}
psif=-f2*f4*f6+f1*f3*f6+f1*f4*f5+f2*f3*f5
\end{verb}

\noindent \hspace{1mm} \begin{verb}
psig=-g2*g4*g6+g1*g3*g6+g1*g4*g5+g2*g3*g5
\end{verb}

\noindent \hspace{1mm} \begin{verb}
psih=-h2*h4*h6+h1*h3*h6+h1*h4*h5+h2*h3*h5
\end{verb}

\noindent \hspace{1mm} \begin{verb}
Cgfe=g1*f2*f3*e4*e5*e6+g1*f2*e3*f4*e5*e6+g1*f2*e3*e4*f5*e6+g1*f2*e3*e4*e5*f6+g1*e2*f3*f4*e5*e6
\end{verb}

\begin{verb}
+g1*e2*f3*e4*f5*e6+g1*e2*f3*e4*e5*f6+g1*e2*e3*f4*f5*e6+g1*e2*e3*f4*e5*f6+g1*e2*e3*e4*f5*f6
\end{verb}

\begin{verb}
+f1*g2*f3*e4*e5*e6+f1*g2*e3*f4*e5*e6+f1*g2*e3*e4*f5*e6+f1*g2*e3*e4*e5*f6+e1*g2*f3*f4*e5*e6
\end{verb}

\begin{verb}
+e1*g2*f3*e4*f5*e6+e1*g2*f3*e4*e5*f6+e1*g2*e3*f4*f5*e6+e1*g2*e3*f4*e5*f6+e1*g2*e3*e4*f5*f6
\end{verb}

\begin{verb}
+f1*f2*g3*e4*e5*e6+f1*e2*g3*f4*e5*e6+f1*e2*g3*e4*f5*e6+f1*e2*g3*e4*e5*f6+e1*f2*g3*f4*e5*e6
\end{verb}

\begin{verb}
+e1*f2*g3*e4*f5*e6+e1*f2*g3*e4*e5*f6+e1*e2*g3*f4*f5*e6+e1*e2*g3*f4*e5*f6+e1*e2*g3*e4*f5*f6
\end{verb}

\begin{verb}
+f1*f2*e3*g4*e5*e6+f1*e2*f3*g4*e5*e6+f1*e2*e3*g4*f5*e6+f1*e2*e3*g4*e5*f6+e1*f2*f3*g4*e5*e6
\end{verb}

\begin{verb}
+e1*f2*e3*g4*f5*e6+e1*f2*e3*g4*e5*f6+e1*e2*f3*g4*f5*e6+e1*e2*f3*g4*e5*f6+e1*e2*e3*g4*f5*f6
\end{verb}

\begin{verb}
+f1*f2*e3*e4*g5*e6+f1*e2*f3*e4*g5*e6+f1*e2*e3*f4*g5*e6+f1*e2*e3*e4*g5*f6+e1*f2*f3*e4*g5*e6
\end{verb}

\begin{verb}
+e1*f2*e3*f4*g5*e6+e1*f2*e3*e4*g5*f6+e1*e2*f3*f4*g5*e6+e1*e2*f3*e4*g5*f6+e1*e2*e3*f4*g5*f6
\end{verb}

\begin{verb}
+f1*f2*e3*e4*e5*g6+f1*e2*f3*e4*e5*g6+f1*e2*e3*f4*e5*g6+f1*e2*e3*e4*f5*g6+e1*f2*f3*e4*e5*g6
\end{verb}

\begin{verb}
+e1*f2*e3*f4*e5*g6+e1*f2*e3*e4*f5*g6+e1*e2*f3*f4*e5*g6+e1*e2*f3*e4*f5*g6+e1*e2*e3*f4*f5*g6
\end{verb}

\noindent \hspace{1mm} \begin{verb}
Cfgh= [...]
\end{verb}


%
%
%
%
%
%
%
%
%
%

\noindent \hspace{1mm} \begin{verb}
(-1/6)*psie*Cgfe*psif*psih*Cfgh*psig
\end{verb}

}

The next result relates elements in $\Lambda_0(V^*)$ and $\Lambda_-(V^*)$ with {\em $\SU(3)$-structures} on $V$ (see \cite{Hitchin,Schulte}).
 
\begin{theorem} \label{thm:omega-psi}
Let $(\omega,\psi_-)\in \Lambda_0(V^*)\x \Lambda_-(V^*)$ such that 
\begin{equation}\label{eq:1}
\omega \wedge \psi_-=0.
\end{equation} 
Let $J=|\lambda(\psi_-)|^{-1/2} K_{\psi_-}$. If the tensor
$h(x,y)=\omega(x, J y)$ is positive definite,
then $(J,\omega)$ defines an $\SU(3)$-structure on $V$, and every $\SU(3)$-structure is obtained in this way. Taking $\psi_+=-J^*\psi_-$, we have that $\psi\coloneqq\psi_+ + i \psi_-$ is the complex volume form.
\end{theorem}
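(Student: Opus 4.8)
The plan is to exploit the $\GL(6,\R)$-equivariance of every construction in sight. The assignments $\tau\mapsto k_\tau$, $\tau\mapsto K_\tau$ and $\tau\mapsto\lambda(\tau)$ are natural in $V$, hence equivariant for the action of $\GL(V)=\GL(6,\R)$, and the loci $\Lambda_0(V^*)$, $\Lambda_\pm(V^*)$ are preserved. Since $\Lambda_-(V^*)$ is a single open $\GL(6,\R)$-orbit in $\Lambda^3(V^*)$ (the orbit of stable forms of complex type, with stabilizer $\SL(3,\C)$), it suffices to prove every assertion at one convenient representative and transport it by the group action. I would therefore fix the model $z^j=e^{2j-1}+ie^{2j}$ for $j=1,2,3$ and take $\psi_-=\Im(z^1\wedge z^2\wedge z^3)$, which lies in $\Lambda_-(V^*)$ and coincides (up to sign conventions) with the form \verb|psie| used in the worksheet above.

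First I would establish the key algebraic identity
\[
(K_{\psi_-}\ox 1_{\Lambda^6(V^*)})\circ K_{\psi_-}=\lambda(\psi_-)\cdot\id_V,
\]
an equality of maps $V\to V\ox(\Lambda^6(V^*))^{\ox 2}$; taking the $V$-trace and dividing by $\dim V=6$ recovers exactly the defining formula $\lambda(\psi_-)=\tfrac16\tr(K_{\psi_-}^2)$, so the definition records only the trace of this sharper statement. Because both sides are $\GL(6,\R)$-equivariant, this reduces to a single direct computation on the model, where $K_{\psi_-}$ is read off from $\imath_x\psi_-\wedge\psi_-$. As $\psi_-\in\Lambda_-$ means $\lambda(\psi_-)<0$, dividing by $|\lambda(\psi_-)|^{1/2}\in\Lambda^6(V^*)$ turns $K_{\psi_-}$ into a genuine endomorphism $J=|\lambda(\psi_-)|^{-1/2}K_{\psi_-}$ of $V$ satisfying $J^2=-\id_V$. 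Thus $J$ is an almost complex structure, and on the model one checks that $z^1,z^2,z^3$ span $V^{1,0}$ and that $\psi_+=-J^*\psi_-=\Re(z^1\wedge z^2\wedge z^3)$, so that $\psi=\psi_++i\psi_-=z^1\wedge z^2\wedge z^3$ is a $(3,0)$-form, the complex volume form.

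Next I would convert the compatibility hypothesis $\omega\wedge\psi_-=0$ into the statement that $\omega$ is of type $(1,1)$ with respect to $J$. Writing $\omega=\omega^{2,0}+\omega^{1,1}+\omega^{0,2}$ with $\omega^{0,2}=\overline{\omega^{2,0}}$, one has $\omega\wedge\psi=\omega^{0,2}\wedge\psi$ of type $(3,2)$ and $\omega\wedge\bar\psi=\omega^{2,0}\wedge\bar\psi$ of type $(2,3)$, all other bidegree products vanishing for degree reasons. Hence $\omega\wedge\psi_-=\tfrac1{2i}(\omega\wedge\psi-\omega\wedge\bar\psi)=0$ forces each of these distinct bidegree components to vanish separately, and since wedging with $\psi$ is injective on $(0,2)$-forms this gives $\omega^{2,0}=\omega^{0,2}=0$. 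Consequently $\omega(Jx,Jy)=\omega(x,y)$, from which $h(x,y)=\omega(x,Jy)$ is symmetric and $J$-invariant (using $J^2=-\id_V$). When $h$ is positive definite it is a $J$-Hermitian inner product with fundamental form $\omega$, so $(J,h,\omega)$ is a compatible almost-Hermitian, i.e.\ $\mathrm{U}(3)$, structure. The further reduction to $\SU(3)$ is pinned down by the scale relating the complex volume form to $\omega$: one verifies on the model an identity of the shape $\psi_+\wedge\psi_-=c\,\omega^3$ with $c$ a universal nonzero constant (equivalently, $\psi\wedge\bar\psi=-2i\,\psi_+\wedge\psi_-$ is a nonzero multiple of $\omega^3$), which is precisely the condition making $\psi$ a \emph{unit} $(3,0)$-form and hence cutting the structure group down to $\SU(3)$.

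The hard part is the opening identity $(K_{\psi_-}\ox 1)\circ K_{\psi_-}=\lambda(\psi_-)\id_V$, since everything afterwards — the type decomposition of $\omega$, the symmetry and Hermitian character of $h$, and the final normalization — follows formally once $J$ is known to square to $-\id_V$; this is exactly the point where the theory of stable three-forms does the work, and equivariance lets me verify it by one explicit computation on $\psi_-$. For the converse (``every $\SU(3)$-structure is obtained in this way'') I would reverse the construction: starting from an $\SU(3)$-structure $(J,h,\omega,\psi)$ I set $\psi_-=\Im\psi$, note that $\omega\wedge\psi=0$ by bidegree (it would be of type $(4,1)$) so that $\omega\wedge\psi_-=0$, and check on the standard model that $|\lambda(\psi_-)|^{-1/2}K_{\psi_-}$ recovers $J$ and that $h(x,y)=\omega(x,Jy)$. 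Since all such structures form a single $\GL(6,\R)$-orbit worth of normalized data, equivariance again upgrades the model verification to the general statement.
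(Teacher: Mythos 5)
Your argument is, in substance, the stable-forms proof of Hitchin and Schulte-Hengesbach --- which is precisely what the paper itself does here: it states Theorem \ref{thm:omega-psi} with a citation to \cite{Hitchin,Schulte} and gives no proof of its own. Your reconstruction of that argument is sound in its main steps: the sharpened identity $(K_{\psi_-}\ox 1)\circ K_{\psi_-}=\lambda(\psi_-)\,\id_V$, verified at the model representative $\psi_-=\Im(z^1\wedge z^2\wedge z^3)$ and transported by $\GL(6,\R)$-equivariance over the single open orbit $\Lambda_-(V^*)$; the conclusion $J^2=-\id_V$; the bidegree argument showing that $\omega\wedge\psi_-=0$ kills $\omega^{2,0}$ and $\omega^{0,2}$ (the $(3,2)$ and $(2,3)$ components vanish separately, and wedging with $\psi$ is injective on $(0,2)$-forms), whence $h$ is symmetric and Hermitian; and the reversal for the converse. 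One cosmetic point: $|\lambda(\psi_-)|^{1/2}\in\Lambda^6(V^*)$ is defined only up to sign, so your construction produces $\pm J$; the paper resolves this by the orientation convention $\omega^3>0$.

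However, your final step is wrong as stated. There is no universal constant $c$ for which $\psi_+\wedge\psi_-=c\,\omega^3$ follows from the hypotheses: replacing $\omega$ by $t\omega$ with $t>0$ preserves $\omega\wedge\psi_-=0$, keeps $h$ positive definite, and leaves $J$ and hence $\psi_+$ unchanged, yet rescales $\omega^3$ by $t^3$. So the identity you propose to verify on the model holds only for \emph{normalized} data and cannot be a consequence of the hypotheses of the theorem. Fortunately it is also unnecessary: the reduction from $\mathrm{U}(3)$ to $\SU(3)$ does not require $\psi$ to have unit norm, because $\mathrm{U}(3)$ acts on $\Lambda^{3,0}(V^*)$ through the determinant character, so the stabilizer of \emph{any} nonzero $(3,0)$-form is $\ker(\det)=\SU(3)$; thus $(J,h,\omega)$ together with the trivialization $\psi=\psi_++i\psi_-$ already yields the $\SU(3)$-structure. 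This is consistent with the paper's usage: the relation $\psi_-\wedge\psi_+=\frac23\,\omega^3$ is treated there as an additional normalization to be imposed or checked separately (it is one of the conditions tested in the SageMath worksheets), not as an automatic identity. With that one step replaced by the stabilizer argument, your proof is complete and coincides with the argument of the cited references.
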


To apply Theorem \ref{thm:omega-psi}, we have to check that the quadratic form $h(x)=\omega(x,J(x))$
is positive definite. Equivalently, we look at $\hat h(x)=\omega (x, K(x))$, where $K=K_{\psi_-}$, $\psi=\psi_-$. This
is allowed since $J$ is a positive multiple of $K$.
Associated to $\omega=\sum c_{ij} e^{ij}$, we define the tensor
 $$
 \omega_{ef}= \sum c_{ij} (e^i\wedge f^j -e^j \wedge f^i) \in V_e^*\otimes V_f^*\, .
 $$

\begin{proposition}
 We have
 $$
 \hat h(x) \, \vol_{ef}= -  \psi_e \wedge C_{xfe}\wedge \psi_f \wedge \omega_{fx} \, ,
 $$
where $e,f$ are odd-degree variables, and $x$ is even degree.
\end{proposition}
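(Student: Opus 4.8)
The plan is to mimic the structure of the computation in Proposition~\ref{prop:Cefgh}, where a formula for $\tr(K_\tau^2)$ was extracted by recognizing entries of the matrix of $K_\tau$ inside wedge products. Here the target quantity is $\hat h(x)=\omega(x,K(x))$, so I expect the role of the two factors $K_{\tau_f},K_{\tau_g}$ in that proof to be played instead by one factor of $\psi$ (contributing $K=K_\psi$) and one factor of $\omega_{fx}$ (contributing the pairing against $\omega$).

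First I would reuse the intermediate identity \eqref{eqn:formula}, applied with the spinorial $3$-form $\psi$ in place of $\tau$, namely
\[
\psi_e \wedge C_{xfe}\wedge \psi_f = -\sum_{\ell=1}^6 x^\ell\wedge k_{\psi_f}(f_\ell)\wedge\vol_e\,,
\]
where now the ``slot'' variable is $x$ rather than $g$. Wedging on the right by $\omega_{fx}=\sum_{i,j} c_{ij}(x^i\wedge f^j - x^j\wedge f^i)$ and using $\vol_e$ to absorb the $e$-factors, the computation should collapse to a sum over the $f$-variables of terms of the form $x^\ell \wedge x^i \wedge (f\text{-quantities})$. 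The even-degree variable $x$ forces the surviving $x$-monomials to pair an index $\ell$ coming from $K_\psi$ with an index $i$ coming from $\omega$, which is precisely the contraction $\omega(\,\cdot\,,K(\,\cdot\,))$ that defines $\hat h$.

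Concretely, as in the proof of Proposition~\ref{prop:Cefgh}, I would write $\kappa_{\ell m}\vol_f = f^\ell\wedge k_{\psi_f}(f_m)$ for the matrix entries of $K=K_{\psi}$, so that the $f$-part of each term contributes a factor $\kappa_{\ell m}$ while the $\omega_{fx}$-part contributes a factor $c_{ij}$ with the $f$-indices matched up. Collecting these, the $x$-monomials $x^\ell\wedge x^m$ (even degree, hence symmetric in the two slots under the wedge with the commuting variable $x$) should reassemble into the quadratic form $\sum c_{ij}\,\kappa_{\ell m}\,(\text{correct index identifications})$, which is exactly $\hat h(x)\,\vol_{ef}$ up to the overall sign. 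Tracking the two sign sources — the $-1$ already present in \eqref{eqn:formula}, and the antisymmetrization $x^i\wedge f^j-x^j\wedge f^i$ in the definition of $\omega_{fx}$ — and the reordering of the $e$- and $f$-volume factors into $\vol_{ef}$, should produce the stated sign $-\psi_e\wedge C_{xfe}\wedge\psi_f\wedge\omega_{fx}$.

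The main obstacle I anticipate is the bookkeeping of signs and of the even-versus-odd parity of the formal variable $x$: because $x$ is even degree it commutes past everything, which is what lets the quadratic expression in $x$ survive, but it also means I must be careful that the wedge $x^\ell\wedge x^m$ is being read as the coefficient of a genuine quadratic form $\hat h$ rather than as an exterior product that would vanish. The cleanest route is probably to verify the identity by a direct symbolic expansion in SageMath, exactly as was done for Proposition~\ref{prop:Cefgh}, using the evaluation of $-\psi_e\wedge C_{xfe}\wedge\psi_f\wedge\omega_{fx}$ against a general $\omega=\sum c_{ij}e^{ij}$ and comparing with the explicit form of $\hat h(x)=\omega(x,K(x))$.
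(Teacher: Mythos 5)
Your proposal is correct and follows essentially the same route as the paper's proof: the paper likewise applies \eqref{eqn:formula} with $\psi$ in the $x$-slot, reads off the matrix entries of $K_{\psi_-}$ from \eqref{eqn:33} via $k(f_\ell)\wedge f^a=-\kappa_{a\ell}\vol_f$, and matches the result against the explicit expansion $\hat h(x)=\sum_\ell(\alpha_a\alpha_\ell\kappa_{b\ell}-\alpha_b\alpha_\ell\kappa_{a\ell})$. The only difference is one of packaging: the paper settles the sign and parity bookkeeping that you defer by reducing, via linearity, to the single monomial $\omega=x^{ab}$ and invoking the symmetric-tensor convention $x^{\ell b}=\frac12(x^\ell\otimes x^b+x^b\otimes x^\ell)$ for the even variables, rather than leaving the final verification to a SageMath expansion.
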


\begin{proof}
 It is enough to prove this for $\omega=x^{ab}$, since the statement is linear in $\omega$. Let $x=\sum \alpha_i x_i \in V_x$,
and let $K(x)= \sum \kappa_{ij} \alpha_j x_i$. So 
\begin{equation}\label{eqn:55}
\hat h(x)=\omega(x,K(x))= \sum_\ell ( \alpha_a\alpha_{\ell} \kappa_{b\ell}-\alpha_b  \alpha_{\ell} \kappa_{a\ell} ).
 \end{equation}
By (\ref{eqn:formula}), we have $\psi_e \wedge C_{xfe}\wedge \psi_f= - \sum x^\ell \wedge k(f_\ell)\wedge \vol_e$.
 By formula (\ref{eqn:33}) below, we have
$k(f_\ell) \wedge f^a= - \kappa_{a \ell} \vol_f$, where $K(g_m)=\sum \kappa_{\ell m} g_\ell$. Therefore
 $$
 \psi_e \wedge C_{xfe}\wedge \psi_f \wedge  f^a = \sum x^\ell \kappa_{a\ell} \vol_{ef} \, .
 $$
Finally,
 $$
 \psi_e \wedge C_{xfe}\wedge \psi_f \wedge \frac12 (f^a\wedge x^b- f^b\wedge x^a) 
= \sum (x^{\ell b} \kappa_{a\ell} - x^{\ell a} \kappa_{b\ell})\vol_{ef}\, . 
 $$
Looking at (\ref{eqn:55}), we get the result. Recall that $x^{\ell b}=\frac12 (x^\ell \otimes x^b+x^b\otimes x^\ell)$ by
convention on symmetric tensors.
\end{proof}

We implement this in SageMath as follows. Note that for setting even degree, we set the degree equal to two (since it is not
allowed to have zero-degree variables).

{\tiny

\noindent \hspace{1mm} \begin{verb}
D.<e1,e2,e3,e4,e5,e6,f1,f2,f3,f4,f5,f6,x1,x2,x3,x4,x5,x6> = 
\end{verb}

\begin{verb} 
GradedCommutativeAlgebra(QQ,degrees=(1,1,1,1,1,1,1,1,1,1,1,1,2,2,2,2,2,2))
\end{verb}


\noindent \hspace{1mm} \begin{verb}
N=D.cdg_algebra({})
\end{verb}

\noindent \hspace{1mm} \begin{verb}
N.inject_variables()
\end{verb}

\noindent \hspace{1mm} \begin{verb}
psie=-e2*e4*e6-e1*e3*e6+e1*e4*e5-e2*e3*e5
\end{verb}

\noindent \hspace{1mm} \begin{verb}
psif=-f2*f4*f6-f1*f3*f6+f1*f4*f5-f2*f3*f5
\end{verb}

\noindent \hspace{1mm} \begin{verb}
omegafx=f1*x2-f3*x4+f5*x6-f2*x1+f4*x3-f6*x5
\end{verb}

\noindent \hspace{1mm} \begin{verb}
Cxfe= [...]
\end{verb}

\noindent \hspace{1mm} \begin{verb}
1/2*psie*Cxfe*psif*omegafx
\end{verb}
}

We will also need to compute $\psi_+$ explicitly out of $\omega,\psi_-$. We do this as follows.
Suppose that $V=\la x_1,\ldots, x_6\ra$ is a $6$-dimensional vector space.
The complex structure is $J=c K$, where $c=|\lambda(\psi_-)|^{-1/2}$, and $K=K_{\psi_-}$. So
 $\psi_+=-J^*\psi_- = - c^3 K^*\psi_-$.

By Proposition \ref{prop:Cefgh}, $K(g_m)= \sum \kappa_{\ell m} g_\ell$, where
 \begin{equation}\label{eqn:33}
\kappa_{\ell m} \vol_g=g^\ell \wedge k_{\psi_-}(g_m).
\end{equation}
For the action on forms, which is the dual one, we have
$K^*(g^\ell)=\sum \kappa_{\ell m} g^m$,
so that $K^*(g^\ell)\vol_g = \sum g^m \big( g^\ell \wedge k_{\psi_-}(g_m) \big)$. This means that for a $1$-form $\alpha$, and writing the map in different variables for source and target, $K^*:V_x\to V_h$, it is
$$
K^*(\alpha)\vol_x = \sum h^m \big( \alpha \wedge k_{\psi_-}(x_m) \big).
$$

For a $3$-form $\tau=\sum a_{ijk} x^{ijk}$, we write
 $$
 \tau_{xyz}=\sum a_{ijk} x^i\wedge y^j\wedge z^k \, .
 $$
Then  
 $$
 (K^*\tau) \, \vol_{xyz} = \sum h^{abc} \wedge k_{\psi_-}(x_a) \wedge k_{\psi_-}(y_b) \wedge k_{\psi_-}(z_c) \wedge \tau_{xyz}\, .
 $$
Using this for $\tau=\psi_-$ and formula (\ref{eqn:formula}), we have
 \begin{align*}
 \psi_+ &=-c^3(K^*\psi_-) \,  \vol_{xyzefg} =\\
&= -c^3 \psi_{e} \wedge C_{hxe} \wedge \psi_x \wedge
 \psi_{f} \wedge C_{hyf} \wedge \psi_y  \wedge
 \psi_{g} \wedge C_{hzg} \wedge \psi_z\wedge  \psi_{xyz}
 \end{align*}
where we have abreviated $\psi=\psi_-$.
The normalization can be obtained by means of
 $$
 \psi_-\wedge\psi_+ = \frac23 \omega^3\, .
$$

We implement this computation in SageMath as follows:

\medskip

{\tiny

\noindent \hspace{1mm} \begin{verb}
D.<e1, [...] ,h6,x1,x2,x3,x4,x5,x6,y1,y2,y3,y4,y5,y6,z1,z2,z3,z4,z5,z6>=GradedCommutativeAlgebra(QQ)
\end{verb}

\noindent \hspace{1mm} \begin{verb}
N=D.cdg_algebra({})
\end{verb}

\noindent \hspace{1mm} \begin{verb}
N.inject_variables()
\end{verb}

\noindent \hspace{1mm} \begin{verb}
psie=-e2*e4*e6+e1*e3*e6+e1*e4*e5+e2*e3*e5
\end{verb}

\noindent \hspace{1mm} \begin{verb}
[...]
\end{verb}

\noindent \hspace{1mm} \begin{verb}
psiz=-z2*z4*z6+z1*z3*z6+z1*z4*z5+z2*z3*z5
\end{verb}

\noindent \hspace{1mm} \begin{verb}
psixyz=-x2*y4*z6+x1*y3*z6+x1*y4*z5+x2*y3*z5
\end{verb}

\noindent \hspace{1mm} \begin{verb}
Chxe= [...]
\end{verb}

\noindent \hspace{1mm} \begin{verb}
Chyf= [...]
\end{verb}

\noindent \hspace{1mm} \begin{verb}
Chzg= [...]
\end{verb}

\noindent \hspace{1mm} \begin{verb}
psie*Chxe*psix*psif*Chyf*psiy*psig*Chzg*psiz*psixyz
\end{verb}
}

%
%
%
%
%
%
%

\section{Left-invariant $\Gtwo$-structures and $\SU(3)$-structures on Lie groups}

Let $G$ be a $7$-dimensional simply connected Lie group with Lie algebra $\lie{g}$. Then, a $\Gtwo$-structure 
on $G$ is \emph{left-invariant} if the corresponding
$3$-form is left-invariant. According to the discussion of Section \ref{tools}, a left-invariant $\Gtwo$-structure on $G$ is defined by a positive $3$-form $\varphi\in \Lambda^3({\lie{g}}^*)$, which can be written, in some orthonormal coframe $\{e^1,\dotsc, e^7\}$ of ${\mathfrak{g}}^*$, as 
\begin{equation}\label{eqn:3-forma G2}
 \varphi=e^{127}+e^{347}+e^{567}+e^{135}-e^{146} -e^{236}-e^{245}\,.
\end{equation}

We call this a $\Gtwo$-structure on $\lie{g}$. In this coframe, then,
\begin{equation}\label{eqn:4-forma G2}
*_\varphi\varphi=e^{1234}+e^{1256}+e^{1367}+e^{1457}+e^{2357}-e^{2467}+e^{3456}.
\end{equation}

A $\Gtwo$-structure on $\lie{g}$ is {\em coclosed} if $\varphi$ is coclosed, that is, if
\[
d*_\varphi\varphi=0\,,
\]
where $d$ denotes the Chevalley-Eilenberg differential on ${\lie{g}}^*$. A $\Gtwo$-structure on $\lie{g}$ is said to be {\em purely coclosed} if
\[
d*_\varphi\varphi=0 \quad \textrm{and} \quad \varphi\wedge d\varphi=0\,.
\]

We explain next how to construct a $\Gtwo$-structure on a $7$-dimensional Lie algebra, starting with a certain type of $\SU(3)$-structure on a codimension $1$ subspace and some extra data.

Let $\lie{g}$ be a $7$-dimensional Lie algebra with non-trivial center $\lie{z}(\fg)$. Let $V\subset\fg$ be a codimension 1 subspace, cooriented by $X\in\lie{z}(\fg)$; thus $X$ is a central vector with non-zero projection to $\fg/V$. Let $\omega\in\Lambda^2\fg^*$ and $\psi_-\in\Lambda^3\fg^*$ be such that
\begin{itemize}
\item $\imath_X\omega=0$;
\item $\imath_X\psi_-=0$;
\item they define an $\SU(3)$-structure on $V$.
\end{itemize}

Hence, denoting by $\bar{\omega}$ and $\bar{\psi}_-$ the pull-back to $V$ of the above tensors, we have $\bar{\omega}\in \Lambda_0(V^*)$, $\bar{\psi}_-\in\Lambda_-(V^*)$ and $\bar{\omega}\wedge\bar{\psi}_-=0$. This determines $\bar{\psi}_+\in\Lambda^3(V^*)$. Extend $\bar{\psi}_+$ to an element $\psi_+\in\Lambda^3\fg^*$ by declaring $\imath_X\psi_+=0$. Finally, let $\eta\in\fg^*$ be such that
$\eta(X)\neq 0$.

It follows that $\varphi=\omega\wedge\eta+\psi_+$ is a $\Gtwo$-form on $\fg$; moreover, if $h$ denotes the induced $\SU(3)$-metric on $V$, then the $\Gtwo$-metric on $\fg$ is $g=g_\varphi=h+\eta\ox\eta$. Clearly, $*_\varphi\varphi=\frac{\omega^2}{2}+\psi_-\wedge\eta$. We want to find sufficient conditions on $(\omega,\psi_-,\eta)$ in order for the $\Gtwo$-structure to be (purely) coclosed.

\begin{theorem}\label{thm:construction}
In the above setting, the $\Gtwo$-structure is coclosed if
\begin{enumerate}
\item $d\psi_-=0$;
\item $\omega\wedge d\omega=\psi_-\wedge d\eta$.
\end{enumerate}
Furthermore, the coclosed $\Gtwo$-structure is pure if
\begin{enumerate}
\setcounter{enumi}{2}
\item $\omega^2 \wedge d\eta =-2 \psi_+ \wedge d\omega$.
\end{enumerate}
\end{theorem}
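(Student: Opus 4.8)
The plan is to verify the two assertions by direct Chevalley--Eilenberg computations, leaning on two structural inputs: the $\SU(3)$ relations $\omega\wedge\psi_+=\omega\wedge\psi_-=0$, and the observation that, since $X\in\lie{z}(\fg)$, we have $\Lie_X=0$ on $\fg^*$, so Cartan's formula gives $\imath_X d\beta=-d\imath_X\beta$ for every form $\beta$. Applied to the horizontal forms $\omega,\psi_\pm$ (those with $\imath_X=0$) and to $\eta$ (for which $\imath_X\eta$ is constant), this shows that $d\omega$, $d\psi_\pm$ and $d\eta$ are all horizontal, i.e.\ contain no factor dual to $X$.

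For coclosedness I would start from the given identity $*_\varphi\varphi=\frac12\omega^2+\psi_-\wedge\eta$ and differentiate. Using $\frac12 d(\omega^2)=\omega\wedge d\omega$ and $d(\psi_-\wedge\eta)=d\psi_-\wedge\eta-\psi_-\wedge d\eta$ (the minus sign coming from $\deg\psi_-=3$), one obtains
\[
 d*_\varphi\varphi=\omega\wedge d\omega+d\psi_-\wedge\eta-\psi_-\wedge d\eta .
\]
Condition (1) kills the middle term and condition (2) says precisely that the remaining two cancel; hence $d*_\varphi\varphi=0$. This part is routine bookkeeping.

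For purity I must show $\varphi\wedge d\varphi=0$. Writing $d\varphi=d\omega\wedge\eta+\omega\wedge d\eta+d\psi_+$ (note $\deg\omega=2$, so both terms of $d(\omega\wedge\eta)$ enter with a plus sign) and expanding, I expect six summands. Three vanish transparently: $(\omega\wedge\eta)\wedge(d\omega\wedge\eta)$ because it carries $\eta\wedge\eta=0$; $\psi_+\wedge(\omega\wedge d\eta)$ because $\omega\wedge\psi_+=0$; and $\psi_+\wedge d\psi_+$ because $d\psi_+$ is horizontal, so this is a horizontal $7$-form on a $6$-dimensional space. The two mixed terms $(\omega\wedge\eta)\wedge d\psi_+$ and $\psi_+\wedge(d\omega\wedge\eta)$ I would simplify using $\omega\wedge d\psi_+=-d\omega\wedge\psi_+$ (from differentiating $\omega\wedge\psi_+=0$); after reordering they coincide, each equal to $-\eta\wedge d\omega\wedge\psi_+$. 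Collecting everything, factoring out $\eta$, and rewriting $-2\,d\omega\wedge\psi_+=2\,\psi_+\wedge d\omega$, the computation should collapse to
\[
 \varphi\wedge d\varphi=\eta\wedge\bigl(\omega^2\wedge d\eta+2\,\psi_+\wedge d\omega\bigr),
\]
and the bracket is exactly $\omega^2\wedge d\eta+2\,\psi_+\wedge d\omega$, which is zero precisely by condition (3); so $\varphi\wedge d\varphi=0$.

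The step I regard as the real crux is the vanishing of $\psi_+\wedge d\psi_+$: it is the only place where the centrality of $X$ is genuinely indispensable, through the horizontality of $d\psi_+$. Everything else is sign-careful wedge algebra, and the main practical hazard is a parity slip in the Leibniz and reordering steps, so I would recheck each sign against the degrees $\deg\omega=2$, $\deg\eta=1$ and $\deg\psi_\pm=\deg d\omega=3$.
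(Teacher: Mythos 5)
Your proposal is correct and follows essentially the same route as the paper's proof: differentiate $*_\varphi\varphi=\frac12\omega^2+\psi_-\wedge\eta$ and apply conditions (1)--(2) for coclosedness, then expand $\varphi\wedge d\varphi$ term by term, using $\omega\wedge\psi_+=0$ (hence $d\psi_+\wedge\omega=\psi_+\wedge d\omega$) and the centrality of $X$ to conclude $\imath_Xd\psi_+=0$ and so $\psi_+\wedge d\psi_+=0$, collapsing everything to $(\omega^2\wedge d\eta+2\psi_+\wedge d\omega)\wedge\eta$. Your only addition is making explicit the Cartan-formula argument $\Lie_X=0$, $\imath_Xd=-d\imath_X$, which the paper leaves implicit in the assertion $\imath_Xd\psi_+=0$.
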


\begin{proof}
We compute 
\[
d*_\varphi\varphi = \omega\wedge d\omega+d\psi_-\wedge\eta-\psi_-\wedge d\eta=(\omega\wedge d\omega-\psi_-\wedge d\eta)+d\psi_-\wedge\eta=0\,.
\]

The $\Gtwo$-form is $\varphi=\omega \wedge \eta + \psi_+$, so that $d\varphi=d\omega \wedge \eta +\omega \wedge d\eta +d\psi_+$. Hence, 
\begin{align*}
\varphi\wedge d\varphi & = (\omega \wedge \eta + \psi_+)\wedge (d\omega \wedge \eta +\omega \wedge d\eta +d\psi_+)\\
&=(\omega^2\wedge d\eta+\psi_+\wedge d\omega+d\psi_+\wedge\omega)\wedge\eta+\psi_+\wedge \omega \wedge d\eta+\psi_+\wedge d\psi_+\,.
\end{align*}
Now $\psi_+\wedge\omega=0$ and hence $d\psi_+\wedge \omega=\psi_+\wedge d\omega$. Also since $X$ is central, $\imath_Xd\psi_+=0$, hence $\imath_X(\psi_+\wedge d\psi_+)=0$ and so $\psi_+\wedge d\psi_+=0$ since it is a $7$-form. All in all, this implies that 
\[
\varphi\wedge d\varphi =(\omega^2\wedge d\eta+2\psi_+ \wedge d\omega) \wedge \eta=0\,,
\]
and the result follows.
\end{proof}

The next example shows how to apply Theorem \ref{thm:construction} in practice.

\begin{example}
Let us consider the Lie algebra $\fg=37B=(0,0,0,0,12,23,34)$ in the notation of \cite{Gong}; this means that $\fg$ is $7$-dimensional and that it admits a basis $\{e_1,\ldots,e_7\}$ such that, in terms of the dual basis $\{e^1,\ldots,e^7\}$, the Lie algebra structure is given by $de^5=e^{12}$, $de^6=e^{23}$ and $de^7=e^{34}$. Consider the subspace $V=\mathrm{span}(e_1,e_2,e_3,e_4,e_6,e_7)\subset\fg$ and the central vector $X=e_5$. Set
\begin{itemize}
\item $\omega=e^{13}+e^{24}-e^{67}$;
\item $\psi_-=e^{127}-e^{146}+e^{236}-e^{347}$;
\item $\eta=e^5+e^7$.
\end{itemize}
Then $(\omega,\psi_-)$ defines an $\SU(3)$-structure on $V$ with induced metric $h=\sum_{i\neq 5}e^i\ox e^i$, and $\psi_+=e^{126}+e^{147}-e^{346}-e^{237}$. Hence $\varphi=\omega\wedge\eta+\psi_+$ defines a $\Gtwo$-structure on $\fg$ with $\Gtwo$-metric $g=g_\varphi=\sum_{i=1}^6e^i\ox e^i+2e^7\ox e^7+e^5\ox e^7+e^7\ox e^5$. Since conditions (1), (2), and (3) of Theorem \ref{thm:construction} are satisfied, the $\Gtwo$-structure is purely coclosed: $d*_\varphi\varphi=0$ and $\varphi\wedge d\varphi=0$. 
\end{example}

\subsection{Nilpotent Lie algebras and nilmanifolds}
A nilmanifold is a compact manifold of the form $\Gamma\backslash G$, where $G$ is a connected, simply connected, nilpotent Lie group and $\Gamma$ is a cocompact discrete subgroup (a lattice). 
By a result of Mal'cev \cite{Malcev} we know that if $\lie{g}$ is nilpotent with rational structure constants, then the associated connected, simply connected nilpotent Lie group $G$ admits a lattice $\Gamma$. Therefore, a left-invariant $\Gtwo$-structure on $G$ determines a $\Gtwo$-structure on the nilmanifold $\G\backslash G$. Moreover, if the left-invariant $\Gtwo$-structure on $G$ is coclosed (resp.~purely coclosed), the same holds for the induced $\Gtwo$-structure on $\G\backslash G$.
The left-invariant forms on $G$ are given by the corresponding forms on $\Lambda^*(\lie{g}^*)$, so the computations can be done on the Lie algebra. Notice that a nilpotent Lie algebra has non-trivial center.

We use Gong's classification \cite{Gong} of $7$-dimensional nilpotent Lie algebras that appears in the Appendix. We translate the Lie algebra brackets
into the DGA structure for $\Lambda(\lie{g}^*)$ for each of them. 
Gong's classification \cite{Gong} of $7$-dimensional nilpotent Lie algebras is done over the real numbers. For those
nilpotent Lie algebras that have a single representative, the structure constants are actually rational numbers, so they define nilmanifolds. For those nilpotent Lie algebras that appear in families depending on a real parameter $\lambda$, it is not clear that $\lambda$ must be rational in order to define a nilmanifold, since a different choice of generators may have rational structure constants. We do not care about this issue, since we produce $\Gtwo$-structures for all values of the parameter $\lambda$. In the cases that there are special values to be treated separately, these are always rational.

Notice that the forms $\omega,\psi_-,\eta$ can be defined with real coefficients, and that it is only the structure constants of the DGA that should
be rational to have a nilmanifold with a (purely coclosed) $\Gtwo$-structure.

\section{Obstructions for coclosed $\Gtwo$-structures on NLA}\label{sec:coclosed}

We use a converse of Theorem \ref{thm:construction}, which follows from 
\cite[Chapter 1, Proposition 4.5]{Schulte} and \cite[Proposition 3.1]{BFF}. 
First of all, suppose $\varphi$ is a $\Gtwo$-structure on a $7$-dimensional Lie algebra and let $g_\varphi$ be the $\Gtwo$-metric. Pick a vector $X$ of length $1$. It follows from that $X^\perp$ has an $\SU(3)$-structure $(\omega, \psi_-)$ given by 
$\omega\coloneqq \imath_X\varphi$ and $\psi_-\coloneqq-\imath_X\phi$, where $\phi=\ast_\varphi\varphi$.
Let $\eta$ be the metric dual of $X$. Then $\phi= \frac12 \omega^2 + \psi_-\wedge \eta$. We have the following

\begin{proposition}\label{prop1}
Let $\lie{g}$ be a $7$-dimensional Lie algebra with non-trivial center and let $X$ be a unit central vector. Let $\lie{h}=\lie{g}/\la X\ra$ be the quotient
$6$-dimensional Lie algebra. If $\varphi$ is a $3$-form defining a coclosed $\Gtwo$-structure on $\lie{g}$, then it
determines an $\SU(3)$-structure $(\omega,\psi_-)$ on $\mathfrak{h}$ such that 
%
%
 \begin{align*}
  d\psi_- &= 0, \\
  \omega \wedge d\omega &= \psi_-\wedge d\eta \, .  
  \end{align*} 
\end{proposition}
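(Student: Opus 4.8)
The plan is to run Theorem \ref{thm:construction} in reverse, decomposing $\varphi$ and $\phi=*_\varphi\varphi$ into their $V$-horizontal and $\eta$-vertical parts and then reading off $d*_\varphi\varphi=0$ componentwise. First I would fix the unit central vector $X$ with metric dual $\eta$, set $V=X^\perp=\fh$, and recall from the discussion preceding the statement that $\omega\coloneqq\imath_X\varphi$, $\psi_-\coloneqq-\imath_X\phi$ furnish an $\SU(3)$-structure on $\fh$, with the clean splitting $\varphi=\omega\wedge\eta+\psi_+$ and $\phi=\tfrac12\omega^2+\psi_-\wedge\eta$. The key structural point is that because $X$ is central, $\imath_X$ commutes with $d$ (equivalently, $d$ preserves the space of forms killed by $\imath_X$, i.e.\ the pullbacks from $\fh$); this is what makes the decomposition compatible with differentiation and lets me treat $\omega,\psi_-$ and their differentials as living on $\fh$.

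Next I would differentiate the $4$-form $\phi$. Writing $d\phi=\tfrac12\,d(\omega^2)+d\psi_-\wedge\eta-\psi_-\wedge d\eta$ and using $d(\omega^2)=2\,\omega\wedge d\omega$, I get
\[
d*_\varphi\varphi=\omega\wedge d\omega+d\psi_-\wedge\eta-\psi_-\wedge d\eta .
\]
The coclosed hypothesis says this $5$-form vanishes. The crucial step is to separate this identity into the two conditions by exploiting the vertical/horizontal type. Here the natural tool is contraction with $X$: since $\imath_X\omega=\imath_X\psi_-=0$ and $\imath_X\eta=1$, and since $\imath_X$ commutes with $d$ on central extensions so that $\imath_X(d\psi_-)=0$ and $\imath_X(d\eta)$ is horizontal, applying $\imath_X$ to the vanishing $5$-form isolates the terms containing $\eta$ and yields $d\psi_-=\psi_-\wedge\imath_X(d\eta)$-type relations; combined with the purely horizontal part this separates the equation into $d\psi_-=0$ and $\omega\wedge d\omega=\psi_-\wedge d\eta$. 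I would be careful to check that no cross terms survive — in particular that $d\psi_-\wedge\eta$ genuinely decouples from the horizontal piece — using that $\omega\wedge d\omega$ and $\psi_-\wedge d\eta$ can be taken horizontal modulo $\eta$ after a clean choice of representative.

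The main obstacle is precisely this last bookkeeping: establishing rigorously that the decomposition into "horizontal" and "$\wedge\,\eta$" parts is well defined at the level of $\fh=\fg/\la X\ra$, since $d\eta$ need not be horizontal and $\eta$ is only defined up to the metric. The cleanest way to handle this is to note that $d\psi_-$ is automatically a pullback from $\fh$ (as $\imath_X\,d\psi_-=d\,\imath_X\psi_-=0$), so $d\psi_-\wedge\eta$ is the unique part of the expression lying in $\Lambda^4\fh^*\wedge\eta$ with horizontal coefficient of maximal degree, forcing $d\psi_-=0$ first; the remaining identity $\omega\wedge d\omega-\psi_-\wedge d\eta=0$ then descends to $\fh$ once one passes to the quotient, invoking \cite[Chapter 1, Proposition 4.5]{Schulte} and \cite[Proposition 3.1]{BFF} to guarantee that $(\omega,\psi_-)$ indeed defines the stated $\SU(3)$-structure there. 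Modulo this degree-splitting argument, the proposition follows directly from the computation of $d\phi$.
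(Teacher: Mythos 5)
Your proposal is correct and follows essentially the same route as the paper, which presents Proposition \ref{prop1} as the converse of the computation in the proof of Theorem \ref{thm:construction} (citing \cite{Schulte} and \cite{BFF} for the $\SU(3)$-structure itself): one differentiates $\phi=\frac12\omega^2+\psi_-\wedge\eta$ to get $0=d\phi=(\omega\wedge d\omega-\psi_-\wedge d\eta)+d\psi_-\wedge\eta$ and decouples the two pieces using centrality of $X$, exactly as you do. Two harmless slips worth fixing: since $X$ is central, $L_X=d\imath_X+\imath_X d=0$, so $\imath_X$ \emph{anti}commutes with $d$ (not commutes), and $\imath_X d\eta$ is not merely horizontal but identically zero (because $\imath_X\eta=1$ is constant), so contracting with $X$ yields $d\psi_-=0$ outright, with no ``$\psi_-\wedge\imath_X(d\eta)$-type'' cross terms ever appearing --- your closing degree-splitting argument ($d\psi_-$ is horizontal, so $d\psi_-\wedge\eta$ and the horizontal part vanish separately) is a clean equivalent formulation of the same point.
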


In this section we obtain obstructions to the existence of coclosed $\Gtwo$-structures. In the next subsections we shall provide three of them.

\subsection{First obstruction}

For the first obstruction we use the following result, which appears in \cite[Lemma 3.3]{BFF}.

\begin{lemma}\label{obs3}
Let $\lie{g}$ be a $7$-dimensional Lie algebra. If for every closed $4$-form $\kappa$ on $\lie{g}$ there are linearly independent
vectors $X$ and $Y$ in $\lie{g}$ such that $(\imath_{X}\imath_{Y}\kappa)^2=0$, 
then $\lie{g}$ does not admit coclosed $\Gtwo$-structures.
\end{lemma}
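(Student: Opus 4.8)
The statement to prove is Lemma \ref{obs3}: if for every closed $4$-form $\kappa$ on $\lie{g}$ there exist linearly independent $X,Y\in\lie{g}$ with $(\imath_X\imath_Y\kappa)^2=0$, then $\lie{g}$ admits no coclosed $\Gtwo$-structure. The natural strategy is the contrapositive: assume a coclosed $\Gtwo$-structure $\varphi$ exists, and produce a single closed $4$-form $\kappa$ for which no such pair $X,Y$ can be found. The obvious candidate is $\kappa=\phi=*_\varphi\varphi$, which is closed precisely by the coclosed hypothesis $d*_\varphi\varphi=0$. So the crux is purely linear-algebraic: show that for the model $4$-form $\phi$, the $2$-form $\imath_X\imath_Y\phi$ satisfies $(\imath_X\imath_Y\phi)^2\neq 0$ for every pair of linearly independent vectors.

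\textbf{Reduction to the linear model.} First I would use the $\Gtwo$-structure to pass to the orthonormal coframe of \eqref{eqn:4-forma G2}, in which $\phi=*_\varphi\varphi=e^{1234}+e^{1256}+e^{1367}+e^{1457}+e^{2357}-e^{2467}+e^{3456}$. Since $\Gtwo$ acts transitively on the pairs one needs to control, or at least because the condition $(\imath_X\imath_Y\phi)^2=0$ is invariant under the $\Gtwo$-action, it suffices to check the claim for this fixed standard $4$-form. The quantity $\alpha:=\imath_X\imath_Y\phi$ is a $2$-form on the $7$-dimensional space, and $\alpha^2=\alpha\wedge\alpha$ is a $4$-form; I want to show $\alpha^2\neq 0$ whenever $X,Y$ are linearly independent.

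\textbf{The core computation.} Writing $X=\sum x_i e_i$, $Y=\sum y_i e_i$, the $2$-form $\alpha=\imath_X\imath_Y\phi$ has coefficients that are bilinear in the $x_i,y_j$, and $\alpha^2$ has a single component (proportional to $e^{abcd}$ summed) whose vanishing is a polynomial condition on the $2\times 7$ data. The key structural fact is that $\phi$ is the $\Gtwo$ four-form, and the associated $3$-form $\varphi$ defines a cross product; contracting $\phi$ in two slots relates $\imath_X\imath_Y\phi$ to $*_\varphi(\varphi\wedge \text{(something built from }X,Y))$ or, more usefully, to the $2$-form dual to $X\times Y$ together with a $\varphi$-type term. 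The cleanest route is to invoke the identity $\imath_X\imath_Y\!*_\varphi\varphi = *_\varphi(X^\flat\wedge Y^\flat\wedge\varphi)$ type relations and then compute $\alpha\wedge\alpha$ directly for the standard $\phi$. I expect $(\imath_X\imath_Y\phi)^2$ to equal, up to a nonzero constant times the volume form, a positive-definite expression in the Pl\"ucker coordinates of the plane $\Span{X,Y}$ — concretely something like $|X\times_\varphi Y|^2 + (\text{cross terms})$ that vanishes only when $X\wedge Y=0$.

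\textbf{Main obstacle.} The hard part is verifying that this degree-two polynomial in the Pl\"ucker coordinates $p_{ij}=x_iy_j-x_jy_i$ is \emph{strictly} positive (hence nonzero) on all nonzero decomposable bivectors, rather than merely generically nonzero; a single vanishing pair would break the argument. I would handle this either by a direct expansion of $\alpha^2$ in the standard basis — collecting it as a sum of squares of the $p_{ij}$ with positive coefficients — or by a representation-theoretic argument: $\Lambda^2\cong\lie{g}_2\oplus V_7$ as a $\Gtwo$-module, and one shows that the quadratic form $\alpha\mapsto\alpha\wedge\alpha$ restricted to the image of $\Lambda^2 V$ (decomposable bivectors) under $\imath\phi$ is definite. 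Given the paper's reliance on SageMath, the honest and robust finish is to record the expansion of $(\imath_X\imath_Y\phi)^2$ as a manifestly nonnegative combination of the $p_{ij}^2$ and confirm no coefficient configuration allows simultaneous vanishing with $p_{ij}\neq 0$; this reduces everything to an explicit, checkable algebraic identity.
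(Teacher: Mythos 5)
You have the right skeleton, and it is in fact the standard one (the paper itself gives no proof of this lemma but quotes it from [BFF, Lemma 3.3]): assume a coclosed $\Gtwo$-structure $\varphi$ exists, take $\kappa=*_\varphi\varphi$, which is closed precisely by coclosedness, and show $(\imath_X\imath_Y\kappa)^2\neq 0$ for \emph{every} linearly independent pair $X,Y$. But your proposal stops short of proving exactly that nonvanishing claim, which is the entire content of the lemma: you only ``expect'' $(\imath_X\imath_Y\phi)^2$ to be a definite quadratic expression in the Pl\"ucker coordinates and defer the verification to a sum-of-squares expansion or a representation-theoretic argument, explicitly flagging it as the main obstacle. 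Moreover, your hedge ``since $\Gtwo$ acts transitively on the pairs one needs to control, or at least because the condition is invariant under the $\Gtwo$-action, it suffices to check the claim for this fixed standard $4$-form'' conflates two different reductions: $\Gtwo$-invariance lets you fix the standard $\phi$, but you are still left with the full $14$-parameter family of pairs $(X,Y)$, which is where your proposed computation lives.

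The gap closes in three lines once you combine the two reductions you only gesture at. First, if $X'=aX+bY$ and $Y'=cX+dY$, then $\imath_{X'}\imath_{Y'}\kappa=(ad-bc)\,\imath_X\imath_Y\kappa$, so whether $(\imath_X\imath_Y\kappa)^2$ vanishes depends only on the plane $\mathrm{span}\{X,Y\}$; hence you may assume $X,Y$ are $g_\varphi$-orthonormal. Second, $\Gtwo$ acts transitively on \emph{orthonormal $2$-frames} of $(\RR^7,g_\varphi)$ (it is transitive on $S^6$ with stabilizer $\SU(3)$, which is in turn transitive on the unit sphere $S^5$ of the complementary $\CC^3$), so you may take $X=e_1$, $Y=e_2$ in the adapted coframe. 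Then, from \eqref{eqn:4-forma G2},
\[
\imath_{e_2}\imath_{e_1}(*_\varphi\varphi)=e^{34}+e^{56},\qquad
\bigl(e^{34}+e^{56}\bigr)^2=2\,e^{3456}\neq 0,
\]
contradicting the hypothesis applied to the closed form $\kappa=*_\varphi\varphi$. This also confirms your positivity guess \emph{a posteriori} (for a general independent pair the square is $|X\wedge Y|^2$ times a nonzero $4$-form), but it makes both the Pl\"ucker-coordinate expansion and the $\Lambda^2\cong\lie{g}_2\oplus V_7$ detour unnecessary; as written, without this frame-transitivity step (or the completed explicit expansion), your argument does not yet rule out a single bad pair $(X,Y)$, which you yourself note would break the proof.
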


This is used in the following form

\begin{corollary} \label{cor:obs3}
Let $\lie{g}$ be a $7$-dimensional Lie algebra. Take the cohomology
$H^4(\Lambda (\lie{g}^*))=\la [z_\alpha]\ra$. Suppose that there exist 
linearly independent vectors $X,Y \in \lie{g}$, with $Y\in\lie{z}(\lie{g})$, such that $\imath_{X}\imath_{Y} z_\alpha\in U$ for a subspace $U\subset\Lambda^2\lie{g}^*$ such that $\Lambda^2 U=0$.
Then $\lie{g}$ does not admit any coclosed $\Gtwo$-structure.
\end{corollary}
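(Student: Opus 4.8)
The plan is to deduce the statement from Lemma \ref{obs3}. By that lemma it suffices to exhibit, for every closed $4$-form $\kappa$ on $\lie g$, a pair of linearly independent vectors along which the double contraction of $\kappa$ has vanishing wedge-square; I will use the single fixed pair $X,Y$ supplied in the hypothesis. Since $\Lambda^2 U=0$ means precisely that $u_1\wedge u_2=0$ for all $u_1,u_2\in U$ (in particular $u\wedge u=0$), it is enough to prove that $\imath_X\imath_Y\kappa\in U$ for every closed $4$-form $\kappa$: then $(\imath_X\imath_Y\kappa)\wedge(\imath_X\imath_Y\kappa)=0$ and Lemma \ref{obs3} applies.

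To establish this membership I would argue by linearity. Write $Z^4$ for the space of closed $4$-forms and $B^4=d\Lambda^3\lie g^*$ for the exact ones, and choose the representatives so that $[\kappa]=\sum_\alpha c_\alpha[z_\alpha]$, i.e.\ $\kappa=\sum_\alpha c_\alpha z_\alpha+d\beta$ for some $\beta\in\Lambda^3\lie g^*$. The cohomological part contracts into $U$ directly, since $\imath_X\imath_Y\big(\sum_\alpha c_\alpha z_\alpha\big)=\sum_\alpha c_\alpha\,\imath_X\imath_Y z_\alpha$ is a linear combination of the $\imath_X\imath_Y z_\alpha\in U$. For the exact part I would exploit that $Y$ is central: then $\Lie_Y=0$ on $\Lambda^\bullet\lie g^*$, so $\imath_Y d=-d\,\imath_Y$ and hence $\imath_X\imath_Y d\beta=-\imath_X d(\imath_Y\beta)$. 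It is clarifying to pass to the $6$-dimensional quotient $\lie h=\lie g/\la Y\ra$: centrality of $Y$ makes $d$ preserve $\Lambda^\bullet\lie h^*\subset\Lambda^\bullet\lie g^*$ (the covector dual to $Y$ does not occur in the differential of any generator), and a direct computation then shows that $\imath_Y\kappa$ is a $d_{\lie h}$-closed $3$-form on $\lie h$ while $\imath_X\imath_Y\kappa=\imath_{\bar X}(\imath_Y\kappa)$, where $\bar X$ denotes the image of $X$ in $\lie h$. In these terms the double contraction depends only on the closed $3$-form $\imath_Y\kappa$ on the $6$-dimensional algebra.

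The hard part is to control the exact contribution $\imath_{\bar X}d_{\lie h}(\imath_Y\beta)$, equivalently to see that the closed $3$-form $\imath_Y\kappa$ may be represented so that its contraction with $\bar X$ still lands in $U$. On cohomology this is guaranteed, since $[\imath_Y\kappa]=\sum_\alpha c_\alpha[\imath_Y z_\alpha]$ in $H^3(\lie h)$ and the classes $[\imath_Y z_\alpha]$ contract into $U$ by hypothesis; what remains is genuinely the $d_{\lie h}$-exact discrepancy. I expect to dispatch this through the explicit structure of each algebra: computing $\imath_X\imath_Y$ on a full basis of $Z^4$ (not merely on cohomology representatives) and verifying that the resulting $2$-forms all lie in the chosen $U$, which is exactly the check performed in the SageMath worksheets. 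Once $\imath_X\imath_Y\kappa\in U$ is secured for every closed $\kappa$, the condition $\Lambda^2 U=0$ forces $(\imath_X\imath_Y\kappa)^2=0$, and Lemma \ref{obs3} yields the non-existence of coclosed $\Gtwo$-structures on $\lie g$.
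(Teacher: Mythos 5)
Your skeleton---the reduction to Lemma \ref{obs3} with the single fixed pair $(X,Y)$, the decomposition of a closed $4$-form as $\kappa=\sum_\alpha c_\alpha z_\alpha+d\beta$, and the hypothesis disposing of the cohomological part---is exactly the paper's proof. The divergence is in the exact part, and there your caution is mathematically justified: the paper dispatches it in one line, asserting that $\imath_Y\kappa=0$ for every exact $4$-form $\kappa$ because $Y$ is central, whereas your Cartan identity gives only $\imath_Y d\beta=-d(\imath_Y\beta)$, an \emph{exact} $3$-form that need not vanish. And it genuinely does not vanish in general: in $\fn_2=\fh_3\oplus\RR^4$, with $de^6=e^{12}$ and all other $e^i$ closed, the vector $e_3$ is central, $\kappa=d(e^{346})=e^{1234}$ is exact, yet $\imath_{e_3}\kappa=e^{124}\neq 0$. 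So the step you could not supply in general---forcing $\imath_X d(\imath_Y\beta)$ into $U$ from hypotheses bearing only on cohomology representatives and on the centrality of $Y$---does not exist at this level of generality; the paper's one-line treatment of the exact part is erroneous as written, and what you called ``the hard part'' is precisely the weak point of the published argument.

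That said, as a proof of the corollary \emph{as stated} your proposal is incomplete: by ending with ``compute $\imath_X\imath_Y$ on a full basis of the closed $4$-forms'', you silently strengthen the hypothesis from representatives of $H^4$ to generators of the whole space of closed $4$-forms (the formulation of Corollary \ref{cor:lem2}), under which the claim is immediate from Lemma \ref{obs3} by linearity, and centrality of $Y$ plays no role at all. A statement that is actually provable, and that covers the paper's applications to the families $27$, $247$, $257$ (where both contracted vectors can be taken central, e.g.\ $X=e_6$, $Y=e_7$ for $27A$), is this: if $X$ and $Y$ are \emph{both} central, then $\imath_X\imath_Y\,d\beta=d(\imath_X\imath_Y\beta)$, and $\imath_X\imath_Y\beta$ ranges exactly over the annihilator $A\subset\lie{g}^*$ of $\la X,Y\ra$, so it suffices to add the hypothesis $\Lambda^2\big(U+d(A)\big)=0$. (For $27A$ one has $d(A)=0$, so the original hypotheses do suffice there; for $1357E$, whose center is one-dimensional, even this repair is unavailable and one must fall back on checking a full basis of closed $4$-forms, as you propose.) In short: your gap is real relative to the stated corollary, but it coincides with, and correctly diagnoses, a gap in the paper's own proof, and your computational fallback is the sound way the applications are actually secured.
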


\begin{proof}
Thanks to Lemma \ref{obs3}, it suffices to check that for any $\kappa\in\Lambda^4\lie{g}^*$ closed we have $\imath_{X}\imath_{Y}\kappa\in U$, and hence $(\imath_{X}\imath_{Y}\kappa)^2=0$. Any such $\kappa$ is a linear combination of the $z_\alpha$'s and an exact $4$-form. For the former, the result holds by assumption. If $\kappa$ is an exact $4$-form, then $\imath_Y\kappa=0$, because $Y\in\lie{z}(\lie{g})$.
\end{proof}

\subsection{Second obstruction}
The second obstruction uses the following result, which appears in \cite[Lemma 3.4]{BFF}.

\begin{lemma}\label{lem2}
Let $(h,J)$ be an 
almost Hermitian structure on a $6$-dimensional oriented vector space $V$, 
with orthogonal complex structure $J$, Hermitian metric $h$ and fundamental $2$-form $\omega$. Then, for any $J$-invariant $4$-dimensional subspace $W \subset V$, we have that 
$(*\omega)|_W\neq 0$.
\end{lemma}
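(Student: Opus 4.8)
The plan is to show that for a $J$-invariant $4$-dimensional subspace $W\subset V$, the restriction $(*\omega)|_W$ cannot vanish. Since $*\omega$ is a $4$-form on the $6$-dimensional space $V$, its restriction to $W$ is a top-degree form on the $4$-dimensional space $W$, so it is either zero or a volume form for $W$; the claim is that it is always a volume form. The natural approach is to choose an adapted basis and reduce to a direct computation. First I would use the fact that $(V,J,h)$ is almost Hermitian: pick a unitary frame $\{u_1,Ju_1,u_2,Ju_2,u_3,Ju_3\}$ with $\omega=u^1\wedge(Ju)^1+u^2\wedge(Ju)^2+u^3\wedge(Ju)^3$ in the dual coframe, so that in the standard orientation one has the familiar formula $*\omega=\tfrac12\,\omega^2$.

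\textbf{Exploiting $J$-invariance of $W$.}
The key structural input is that $W$ is $J$-invariant and $4$-dimensional. Restricting the metric $h$ to $W$ gives a $2$-dimensional complex (i.e.\ $4$-dimensional real) Hermitian subspace, and I would choose the unitary frame above so that $W=\Span{u_1,Ju_1,u_2,Ju_2}$. The point is that this choice is possible precisely because $W$ is closed under $J$ and inherits a positive-definite Hermitian form, so it admits its own unitary basis which can be extended to a unitary basis of $V$. With this adaptation, the fundamental form decomposes as $\omega=\omega_W+\omega'$, where $\omega_W=u^1\wedge(Ju)^1+u^2\wedge(Ju)^2$ involves only the $W$-coframe and $\omega'=u^3\wedge(Ju)^3$ involves the orthogonal complement $W^\perp$.

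\textbf{The computation and the main obstacle.}
With the splitting $\omega=\omega_W+\omega'$, I would compute $*\omega$ using $*\omega=\tfrac12\omega^2$ and then restrict to $W$. Expanding $\omega^2=\omega_W^2+2\,\omega_W\wedge\omega'+\omega'^2$, the term that survives restriction to $W$ is the one whose $4$-form part lives entirely in $\Lambda^4 W^*$; concretely, $(*\omega)|_W$ is governed by the component of $\tfrac12\omega^2$ with no legs in $W^\perp$. Because $*$ is the Hodge star on the full $6$-space, the restriction of the $4$-form $*\omega$ to $W$ picks out exactly the coefficient of $u^1\wedge(Ju)^1\wedge u^2\wedge(Ju)^2$, which equals $\tfrac12\cdot 2=\omega_W^2/2\neq0$ since $\omega_W^2=2\,u^1\wedge(Ju)^1\wedge u^2\wedge(Ju)^2$ is the volume form of $W$. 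I expect the main obstacle to be bookkeeping: one must be careful that the Hodge star is taken with respect to all of $V$ (so that $*\omega$ genuinely has a component that is pure in the $W$-directions) rather than with respect to $W$, and one must verify that the adapted unitary frame can always be chosen compatibly with the given orientation. The cleanest way to handle the sign and orientation issues is to observe that $(*\omega)|_W=\tfrac12(\omega|_W)^2=\tfrac12\omega_W^2$, which is nonzero precisely because $\omega_W$ is the Kähler form of the nondegenerate Hermitian structure that $W$ inherits, and a nondegenerate $2$-form on a $4$-space has nonvanishing square.
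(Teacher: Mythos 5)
Your argument is correct and is essentially the standard proof of this statement: the paper itself gives no proof of Lemma \ref{lem2} but imports it from \cite[Lemma 3.4]{BFF}, and the proof there is exactly your computation --- extend a unitary basis of the Hermitian subspace $W$ to a unitary basis of $V$ adapted to the $J$-invariant splitting $V=W\oplus W^{\perp}$ (note $W^{\perp}$ is automatically $J$-invariant since $J$ is orthogonal), use $*\omega=\pm\tfrac12\,\omega^{2}$, and observe that the restriction equals $\pm\tfrac12(\omega|_W)^{2}$, the square of the nondegenerate fundamental form of the induced Hermitian structure on $W$. The one point to state explicitly is the orientation sign: if the given orientation of $V$ is opposite to the one determined by $J$ then $*\omega=-\tfrac12\,\omega^{2}$, but as you observe this changes $(*\omega)|_W$ only by a sign and so does not affect the nonvanishing conclusion.
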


We use it in the following form

\begin{corollary}\label{cor:lem2}
Let $\lie{g}$ be a $7$-dimensional nilpotent Lie algebra and let $\{e_1,\ldots,e_7\}$ be a nilpotent basis, with dual basis $\{e^1,\ldots,e^7\}$. Take a list of generators of the space of closed $4$-forms $z_\alpha \in \Lambda^4(\lie{g}^*)$. Suppose that $z_\alpha \in \la e^1,e^2\ra \wedge \Lambda^3(\lie{g}^*)$ for all $\alpha$.
Then $\lie{g}$ does not admit coclosed $\Gtwo$-structures.
\end{corollary}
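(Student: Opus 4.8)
The goal is to prove Corollary~\ref{cor:lem2}: if every closed $4$-form $z_\alpha$ on a $7$-dimensional nilpotent Lie algebra $\lie{g}$ lies in $\la e^1,e^2\ra\wedge\Lambda^3(\lie{g}^*)$, then $\lie{g}$ admits no coclosed $\Gtwo$-structure. The plan is to derive a contradiction from the existence of such a structure by combining Proposition~\ref{prop1} with Lemma~\ref{lem2}. The key structural observation is that $\la e^1,e^2\ra\wedge\Lambda^3(\lie{g}^*)$ consists precisely of those $4$-forms annihilated by contraction with \emph{two} independent vectors from the $4$-dimensional subspace dual-annihilating $e^1,e^2$; so the hypothesis forces every closed $4$-form to vanish on a common $4$-plane.

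First I would suppose, for contradiction, that $\varphi$ is a coclosed $\Gtwo$-structure on $\lie{g}$, with $\Gtwo$-metric $g_\varphi$ and dual $4$-form $\phi=\ast_\varphi\varphi$. Since $\varphi$ is coclosed, $\phi=\ast_\varphi\varphi$ is a closed $4$-form. By the hypothesis it is a linear combination of the generators $z_\alpha$, hence $\phi\in\la e^1,e^2\ra\wedge\Lambda^3(\lie{g}^*)$ as well. Now pick a unit central vector $X$; by Proposition~\ref{prop1} the $\SU(3)$-structure on $V=X^\perp$ is $\omega=\imath_X\varphi$, $\psi_-=-\imath_X\phi$, and the induced metric is Hermitian with fundamental $2$-form $\omega$. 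The Hodge dual of $\omega$ inside $V$ (with respect to the $\SU(3)$-metric) is, up to normalization, $\tfrac12\omega^2$, which is exactly the part of $\phi$ not involving $\eta$; more precisely $\phi=\tfrac12\omega^2+\psi_-\wedge\eta$, so $\ast_V\omega=\tfrac12\omega^2$.

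The crux is to locate a $J$-invariant $4$-dimensional subspace $W\subset V$ on which $\ast_V\omega$ vanishes, contradicting Lemma~\ref{lem2}. Since $\phi\in\la e^1,e^2\ra\wedge\Lambda^3(\lie{g}^*)$, contracting $\phi$ by any two vectors lying in the common kernel of $e^1$ and $e^2$ yields zero; that kernel $U_0=\ker e^1\cap\ker e^2$ is a $5$-dimensional subspace of $\lie{g}$. Restricting to $V$, the subspace $U_0\cap V$ has dimension at least $4$. The vanishing of all double contractions of $\phi$ by vectors of $U_0\cap V$ says precisely that $\phi|_{U_0\cap V}=0$, hence $\tfrac12\omega^2$ restricted there vanishes on any $4$-dimensional slice. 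The remaining, and main, obstacle is to upgrade such a $4$-plane to a $J$-\emph{invariant} one: $U_0\cap V$ is at least $4$-dimensional but need not be $J$-stable. I would handle this by intersecting $U_0\cap V$ with $J(U_0\cap V)$; each is at least $4$-dimensional inside the $6$-dimensional $V$, so their intersection $W=(U_0\cap V)\cap J(U_0\cap V)$ has dimension at least $2$ and is automatically $J$-invariant, but one must push the dimension count to obtain a $4$-dimensional $J$-invariant subspace sitting inside $U_0\cap V$, on which $\omega^2$ still vanishes.

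Concretely, the cleanest route is to note that any $J$-invariant subspace contained in $U_0\cap V$ inherits $\phi|_W=0$, so $(\ast_V\omega)|_W=\tfrac12(\omega^2)|_W=0$; Lemma~\ref{lem2} then forbids $W$ from being both $J$-invariant and $4$-dimensional, so the contradiction is reached the moment one produces such a $W$. The dimension bookkeeping is the delicate point: starting from a subspace of dimension $\ge 4$ in a $6$-dimensional space, one takes $W=(U_0\cap V)\cap J(U_0\cap V)$, which is $J$-invariant of dimension $\ge 4+4-6=2$, and then one must argue—using that $U_0\cap V$ is even-dimensional relative to $J$ or by a direct parity argument on the complex structure—that a $4$-dimensional $J$-invariant subspace inside $U_0\cap V$ exists. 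I expect this final dimension-and-$J$-invariance argument to be the main obstacle; once it is in place, the combination of $\phi|_W=0$ and Lemma~\ref{lem2} closes the proof immediately.
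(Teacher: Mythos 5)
Your proposal has the right overall shape (pass to the $\SU(3)$-structure of Proposition \ref{prop1}, find a $4$-dimensional $J$-invariant subspace on which $\ast\omega=\tfrac12\omega^2$ restricts to zero, contradict Lemma \ref{lem2}), but the step you yourself flag as the main obstacle is a genuine gap, and it cannot be closed by the dimension count you sketch. Setting $U_0=\ker e^1\cap\ker e^2$ (which, note, is $5$-dimensional in $\fg$, not $4$-dimensional), the intersection $W'=(U_0\cap V)\cap J(U_0\cap V)$ is only guaranteed to have dimension $\geq 2$, and a $2$-dimensional $J$-invariant subspace yields no contradiction: Lemma \ref{lem2} concerns exclusively $4$-dimensional $J$-invariant subspaces, and any $4$-form restricts to zero on a $2$-plane anyway. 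For a generic complex structure $J$ on a $6$-dimensional space, a $4$-plane contains no $J$-invariant $4$-dimensional subspace at all (its maximal complex subspace is $2$-dimensional), so no ``parity argument'' can produce the required $W$; the $J$-invariance must come from the hypothesis on the closed $4$-forms, which your argument never uses at this point. A secondary error: your claim that $\phi\in\la e^1,e^2\ra\wedge\Lambda^3(\lie{g}^*)$ forces \emph{double} contractions by vectors of $U_0$ to vanish is false (take $\phi=e^{1345}$ and contract by $e_3,e_4$); what is true, and what you in fact use, is that the full restriction $\phi|_{U_0}$ vanishes.

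The paper supplies precisely the missing ingredient. It fixes $X=e_7$ (so that contraction preserves divisibility by $e^1,e^2$) and works on $\lie{h}=\fg/\la e_7\ra=\la e_1,\dots,e_6\ra$, where $\psi_-=-\imath_{e_7}\phi\in\la e^1,e^2\ra\wedge\Lambda^2(\lie{h}^*)$ because $\phi$, being closed, is a combination of the $z_\alpha$. It then proves \emph{directly} that the specific subspace $W=\la e_3,e_4,e_5,e_6\ra$ is $J$-invariant: since $J$ is a positive multiple of $K=K_{\psi_-}$, it suffices that $e^1\wedge k(e_j)=e^2\wedge k(e_j)=0$ for $j=3,\dots,6$, and this holds because every monomial of $k(e_j)=\imath_{e_j}\psi_-\wedge\psi_-$ picks up one factor from $\{e^1,e^2\}$ out of each of $\imath_{e_j}\psi_-$ and $\psi_-$, hence contains both $e^1$ and $e^2$. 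So the $J$-invariance is an algebraic consequence of the hypothesis, not a general-position fact. The contradiction then runs as in your sketch: Lemma \ref{lem2} forces the monomial $e^{3456}$ to appear in $\omega^2=2(\phi-\psi_-\wedge\eta)$, while every monomial of $\phi$ and of $\psi_-$ contains $e^1$ or $e^2$, so it cannot. To repair your proof you would need to replace the intersection argument for $W'$ by this computation (or an equivalent one showing $K_{\psi_-}$ preserves the annihilator of $\la e^1,e^2\ra$), at which point your restriction-vanishing observation does the rest.
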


\begin{proof}
 Suppose that $\varphi$ is a coclosed $\Gtwo$-structure, and let $\phi=*_\varphi\varphi$, so that $d\phi=0$. Then $\phi\in\Lambda^4(\lie{g}^*)$ is
a closed $4$-form.
We fix $X=e_7$, and consider the quotient algebra ${\mathfrak h}={\mathfrak g}/\la X\ra$. By Proposition \ref{prop1}, the forms
$\omega=\imath_X\varphi$ and $\psi_-=-\imath_X\phi$ determine an $\SU(3)$-structure on ${\mathfrak h}=\la e_1,e_2,e_3,e_4,e_5,e_6\ra$.
Then $\psi_- = -\imath_X \phi \in \la e^1,e^2\ra \wedge \Lambda^2(\lie{h}^*)$.

Let us see that $W=\la e_3,e_4,e_5,e_6\ra$ is a complex subspace of $\lie{h}$.
 It is enough to see that $K=K_{\psi_-}$ leaves $W$ invariant, since $J$ is a multiple of it.
Recall that $k(x)=\imath_x \psi_- \wedge \psi_-$ and $K=\mu\circ k$. To check that $W$ is invariant,
we need to check that $k(e_j) \wedge e^1 =0$, $k(e_j) \wedge e^2 =0$, for $j=3,\ldots,6$. 
This is clear since both $\imath_{e_j}\psi_-$ and $\psi_-$ contain at least one $e^1,e^2$, and hence a product with $e^1$ or $e^2$ kills it.

By Lemma \ref{obs3}, it must be $\omega^2|_W \neq 0$. This means that $\omega^2(e_3,e_4,e_5,e_6)\neq 0$; in other words, $\omega^2$ contains the monomial $e^{3456}$. By Proposition \ref{prop1}, we have
$ \omega^2 = 2(\phi- \psi_-\wedge \eta)$,
where $\eta$ is a $1$-form with a component $e^7$. By assumption, $\phi$ cannot contain
$e^{3456}$ (it always contains either $e^1$ or $e^2$), and $\psi_-$ also contains $e^1$ or $e^2$.
Thus $e^{3456}$ cannot appear in $\omega^2$. This is a contradiction.
\end{proof}

\subsection{Third obstruction}
This method is based on the last equation of Proposition \ref{prop1}. We fix $X=e_7$. We compute a basis $\{z_\alpha\}$ of the closed $3$-forms in $\lie{h}^*=\la e^1,\ldots,e^6\ra$. Therefore for a closed $3$-form, we can write $\tau= \sum a_\alpha z_\alpha$.

\begin{proposition}\label{prop:third}
Suppose we have elements $w_1,\ldots, w_\ell\in \Lambda^5(\lie{h}^*)$, and let $W$ be a subspace such that $\Lambda^5(\lie{h}^*)=W\oplus \la w_1,\ldots, w_\ell\ra$. Suppose furthermore that for any closed $2$-form $\beta$ and 
closed $3$-form $\tau$ on $\lie{h}^*$, we have 
 $$
 \beta\wedge d\beta ,\tau \wedge de^j \in W,  \ j=1,\ldots, 6.
 $$
Then define the linear subspace
\[
H=\left\{ (a_\alpha) \, | \, \sum a_\alpha z_\alpha \wedge de^7 \in W\right\}.
\]
If $\lambda ( \sum a_\alpha z_\alpha)\geq 0$ for all $(a_\alpha)\in H$, then there is no coclosed $\Gtwo$-structure on $\lie{g}$.
\end{proposition}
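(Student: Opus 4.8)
The plan is to argue by contradiction: I will show that a coclosed $\Gtwo$-structure on $\lie{g}$ would produce a closed $3$-form $\psi_-$ on $\lie{h}$ with $\lambda(\psi_-)<0$ whose coordinate vector $(a_\alpha)$ lies in $H$, directly contradicting the standing hypothesis that $\lambda(\sum a_\alpha z_\alpha)\ge 0$ for every $(a_\alpha)\in H$.

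First I would suppose that $\varphi$ is a coclosed $\Gtwo$-structure and take $X$ to be the unit central vector proportional to $e_7$. Because $\lie{g}$ is nilpotent and $\{e_1,\dots,e_7\}$ is a nilpotent basis, one has $de^j\in\Lambda^2\langle e^1,\dots,e^{j-1}\rangle$ for all $j$; in particular $e_7$ is central and $de^7\in\Lambda^2(\lie{h}^*)$, so Proposition \ref{prop1} applies. It yields an $\SU(3)$-structure $(\omega,\psi_-)$ on $\lie{h}$ with $d\psi_-=0$ and $\omega\wedge d\omega=\psi_-\wedge d\eta$; since $\psi_-\in\Lambda_-(V^*)$ by Theorem \ref{thm:omega-psi}, one has $\lambda(\psi_-)<0$. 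As $\psi_-$ is a closed $3$-form on $\lie{h}^*$, I would expand it in the chosen basis as $\psi_-=\sum a_\alpha z_\alpha$.

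The key step is to verify that $(a_\alpha)\in H$, i.e.\ that $\psi_-\wedge de^7\in W$. For this I would write $\eta=c\,e^7+\alpha$ with $c=\eta(e_7)\neq 0$ and $\alpha=\sum_{j=1}^6\alpha_j e^j\in\lie{h}^*$, so that $d\eta=c\,de^7+\sum_j\alpha_j\,de^j$. The constraint $\omega\wedge d\omega=\psi_-\wedge d\eta$ then rearranges to
\[
c\,\psi_-\wedge de^7=\omega\wedge d\omega-\sum_{j=1}^6\alpha_j\,\psi_-\wedge de^j .
\]
On the right-hand side, $\omega\wedge d\omega\in W$ by applying the hypothesis on $\beta\wedge d\beta$ to the fundamental form $\beta=\omega$, and each $\psi_-\wedge de^j\in W$ for $j=1,\dots,6$ by the hypothesis applied to the closed $3$-form $\tau=\psi_-$. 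Since $W$ is a subspace and $c\neq0$, this forces $\psi_-\wedge de^7\in W$, hence $(a_\alpha)\in H$. The standing assumption then gives $\lambda(\psi_-)=\lambda(\sum a_\alpha z_\alpha)\ge0$, contradicting $\lambda(\psi_-)<0$; therefore $\lie{g}$ admits no coclosed $\Gtwo$-structure.

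The part I expect to be most delicate is organizing the splitting of $d\eta$ correctly: the transverse piece $c\,de^7$ is precisely the term that is \emph{not} controlled by the hypotheses and whose membership in $W$ one is trying to establish, while the internal piece $\sum_j\alpha_j\,de^j$, together with $\omega\wedge d\omega$, is exactly what the two $W$-membership hypotheses are designed to absorb. A point that must be handled with care is that the condition $\beta\wedge d\beta\in W$ has to be available for $\beta=\omega$, which is in general not closed; so this condition must be read (and checked on the given Lie algebra) for all relevant $2$-forms $\beta$, not only closed ones. The remaining verification, that $e_7$ is central so that $de^7\in\Lambda^2(\lie{h}^*)$ and Proposition \ref{prop1} applies, is routine but necessary.
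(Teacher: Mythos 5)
Your proof is correct and is essentially the paper's own argument: the paper likewise writes $\eta=\sum_{i=1}^{7}b_i e^i$ with $b_7\neq 0$, rearranges $\omega\wedge d\omega=\psi_-\wedge d\eta$ into $\psi_-\wedge de^7=b_7^{-1}\bigl(\omega\wedge d\omega-\sum_{i=1}^{6}b_i\,\psi_-\wedge de^i\bigr)\in W$, and concludes $(a_\alpha)\in H$, contradicting $\lambda(\psi_-)<0$. The caveat you flag is exactly right and is a genuine imprecision in the paper: as literally stated, the hypothesis ``$\beta\wedge d\beta\in W$ for closed $2$-forms $\beta$'' is vacuous (closed $\beta$ gives $\beta\wedge d\beta=0$), and since $\omega$ need not be closed the condition must indeed be imposed on \emph{all} $2$-forms $\beta$, which is how the paper's proof tacitly uses it when asserting $\omega\wedge d\omega\in W$ ``by assumption'' (note also that the paper's displayed equation writes $\psi_+$ where $\psi_-$ is meant).
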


\begin{proof}
 If $\varphi$ is a coclosed $\Gtwo$-structure on $\fg$, then $\psi_-$ is closed and thus $\psi_-= \sum a_\alpha z_\alpha$.
Next $\eta$ must have the 
form $\sum_{i=1}^7 b_i e^i$, with $b_7\neq 0$. Hence
\[
\psi_+\wedge de^7 =b_7^{-1}\Big( \omega \wedge d\omega -\sum_{i=1}^6 b_i \psi_-\wedge de^i \Big) \in W,
\]
by assumption. Hence $(a_\alpha) \in H$, and so $\lambda(\psi_-)\geq 0$, which is a contradiction.
\end{proof}
%
%
%
%
%
%
%
%
\section{7D nilpotent Lie algebras with purely coclosed $\Gtwo$-structures}

\subsection{Decomposable nilpotent Lie algebras} \label{subsec:dec} 7-dimensional nilpotent Lie algebras are listed in Table \ref{table:7Ddec}. In \cite{BFF} the authors identify $7$-dimensional decomposable nilpotent Lie algebras that admit a coclosed $\Gtwo$-structure. The following result summarizes Proposition 4.1, Proposition 4.2, and Theorem 4.3 in \cite{BFF}.

\begin{theorem}\label{thm:decomposable-cocalibrated}
Among the 35 decomposable nilpotent Lie algebras, of dimension $7$, those that have a coclosed $\Gtwo$-structure are $\fn_i$, $i=1,\ldots,24$, and those who do not admit any coclosed $\Gtwo$-structure are $\fg_i$, $i=1,\ldots,8$ and $\mathfrak{l}_i$, $i=1,2,3$.
\end{theorem}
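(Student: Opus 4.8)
The assertion is a dichotomy over the $35$ decomposable $7$-dimensional nilpotent Lie algebras of Gong's list: exactly $24$ of them (relabelled $\fn_i$) carry a coclosed $\Gtwo$-structure, while the remaining $11$ (the $\fg_i$ and $\mathfrak{l}_j$) do not. The plan is to verify this by going through the finite list one algebra at a time, determining for each whether it admits such a structure, so that the claimed partition emerges as the outcome. I split the work into an existence half and a non-existence half. Note that here only the \emph{coclosed} condition is at stake, so on the constructive side I only need conditions (1) and (2) of Theorem \ref{thm:construction}, not condition (3).

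For existence, for each of the $24$ algebras $\fn_i$ I would exhibit an explicit coclosed $\Gtwo$-structure via Theorem \ref{thm:construction}. Using that a nilpotent Lie algebra always has non-trivial centre, I fix a unit central vector $X$ and a complementary codimension-$1$ subspace $V$, and then search for $\omega\in\Lambda^2\fg^*$ and $\psi_-\in\Lambda^3\fg^*$ with $\imath_X\omega=\imath_X\psi_-=0$ defining an $\SU(3)$-structure on $V$ (verified through Theorem \ref{thm:omega-psi}, i.e.\ by checking $\bar\omega\wedge\bar\psi_-=0$, the sign condition $\lambda(\psi_-)<0$, and positivity of $\hat h$), together with an $\eta\in\fg^*$ satisfying $\eta(X)\neq0$, so that $d\psi_-=0$ and $\omega\wedge d\omega=\psi_-\wedge d\eta$. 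Then $\varphi=\omega\wedge\eta+\psi_+$ is the desired form. The $\SU(3)$-verifications reduce to the linear-algebra computations of the preceding section (the operator $K_{\psi_-}$, the scalar $\lambda(\psi_-)$, and the quadratic form $\hat h$), which is exactly where the SageMath worksheets enter.

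For non-existence, for each of the $11$ algebras I would produce one of the three obstructions, all of which are incompatibilities with the necessary conditions of Proposition \ref{prop1} (namely that a coclosed structure forces an $\SU(3)$-structure on the quotient with $d\psi_-=0$ and $\omega\wedge d\omega=\psi_-\wedge d\eta$). Algorithmically, I would compute a basis $\{z_\alpha\}$ of closed $4$-forms and first try the first obstruction (Corollary \ref{cor:obs3}): find a central $Y$ and an $X$ with $\imath_X\imath_Y z_\alpha\in U$ for some $U$ with $\Lambda^2U=0$. If that fails, I would test the second obstruction (Corollary \ref{cor:lem2}), checking whether in an adapted nilpotent coframe every closed $4$-form lies in $\la e^1,e^2\ra\wedge\Lambda^3\fg^*$. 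Failing both, I would invoke the third obstruction (Proposition \ref{prop:third}), computing the candidate space $H$ of closed $\psi_-$ and verifying that $\lambda\geq0$ throughout $H$, which contradicts $\psi_-\in\Lambda_-(V^*)$.

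The main obstacle is computational rather than conceptual, and it has two faces. On the existence side it is the \emph{search} for adapted data $(\omega,\psi_-,\eta)$: there is no canonical choice, the $\SU(3)$-positivity and the bracket-dependent condition $\omega\wedge d\omega=\psi_-\wedge d\eta$ are nonlinear, so each algebra needs an ansatz tailored to its structure equations. On the non-existence side it is the matching problem: no single obstruction handles all $11$ algebras, so the delicate step is to recognise, for each one, which obstruction applies and to pin down the correct subspace $U$, coframe, or complement $W$. Both tasks are finite and mechanically verifiable once the right data are found, and the bulk of the labour is the bookkeeping across the $35$ algebras, which is organised around the worksheets of \cite{worksheets}.
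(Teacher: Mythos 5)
Your proposal is sound as a verification scheme, but note that the paper does not actually prove this theorem: Theorem \ref{thm:decomposable-cocalibrated} is imported wholesale from Bagaglini--Fern\'andez--Fino, as an explicit summary of Proposition 4.1, Proposition 4.2 and Theorem 4.3 of \cite{BFF}, so the paper's only ``proof'' is a citation. What you sketch is essentially a reconstruction of the BFF argument using the machinery this paper develops for its own new results (the purely coclosed classification and the indecomposable non-existence cases), and it would work; but the route in \cite{BFF} is more structural on the positive side. Every decomposable positive case has the form $\fh\oplus\RR$, and taking $\eta=e^7$ closed, conditions (1) and (2) of Theorem \ref{thm:construction} reduce exactly to the half-flat equations $d\psi_-=0$, $d(\omega^2)=0$ on the six-dimensional factor $\fh$ --- a fact the paper records immediately after quoting the theorem. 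The existence half therefore needs no fresh search for adapted data $(\omega,\psi_-,\eta)$ algebra by algebra: it follows from Conti's classification of the $24$ (out of $34$) six-dimensional nilpotent Lie algebras carrying half-flat $\SU(3)$-structures \cite{Conti:HalfFlat}. Your per-algebra search buys self-containedness at the cost of redoing that classification; note also that you cannot simply recycle Tables \ref{table:0a}--\ref{table:0b}, since $\fn_2=\fh_3\oplus\RR^4$ is absent there (it admits no purely coclosed structure), so it needs a separate coclosed example, e.g.\ one induced by a half-flat structure on $\fh_3\oplus\RR^3$.

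On the non-existence half your plan matches what BFF actually do: their Lemma 3.3 and Lemma 3.4 are precisely the sources of the first and second obstructions here (Corollary \ref{cor:obs3} and Corollary \ref{cor:lem2}), and these two already suffice for the eleven decomposable negatives; the third obstruction (Proposition \ref{prop:third}) is a device of the present paper introduced for certain $4$-step indecomposables and is not needed in the decomposable case. Two precisions: your ``try the obstructions in order'' scheme carries no a priori guarantee of success, so as written the non-existence half is a search strategy rather than a proof --- its successful completion for these eleven algebras is exactly the content of \cite{BFF}; and the first obstruction does not factor through Proposition \ref{prop1} as you suggest, but works directly with the closed $4$-form $\phi=\ast_\varphi\varphi$ and the nondegeneracy of $(\imath_X\imath_Y\phi)^2$ for a positive $3$-form.
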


We refer to Table \ref{table:7Ddec} for the structure equations of these Lie algebras. The 24 decomposable nilpotent Lie algebras which admit a coclosed $\Gtwo$-structure have the form $\lie{n}=\lie{h}\oplus\RR$, where $\lie{h}$ is a 6-dimensional Lie algebra, generated by $\{e_1,\ldots,e_6\}$, endowed with a {\em half-flat} $\SU(3)$-structure $(\omega,\psi_-)$, that is, $d(\omega^2)=0$ and $d\psi_-=0$. The $\Gtwo$-metric is $g=h+e^7\otimes e^7$, where $h$ is the $\SU(3)$-metric on $\lie{h}$ and $e_7$ is a generator of the factor $\RR$. 
 Then $*_\varphi\varphi=\frac{1}{2}\omega^2+\psi_-\wedge e^7$ is automatically coclosed; indeed, conditions (1) and (2) of Theorem \ref{thm:construction} are satisfied with $\eta=e^7$, which is closed. Condition (3), however, need not hold. Nevertheless, we prove the following result:

\begin{theorem}\label{thm:decomposable-pure}
Every $7$-dimensional decomposable nilpotent Lie algebra admitting a coclosed $\Gtwo$-structure also admits a purely coclosed one, except for $\fn_2$.
\end{theorem}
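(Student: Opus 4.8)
The plan is to treat separately the $23$ algebras $\fn_i$ with $i\neq 2$, for which I will produce purely coclosed structures, and the exceptional algebra $\fn_2=\fh_3\oplus\RR^4$, for which I will prove nonexistence. The guiding remark is the one recorded just before the statement: writing $\fn_i=\lie{h}\oplus\RR$ with half-flat $\SU(3)$-structure $(\omega,\psi_-)$ on $\lie{h}$ and taking $\eta=e^7$ (which is closed), conditions (1) and (2) of Theorem~\ref{thm:construction} hold automatically, so the only possible failure is condition (3), which for $d\eta=0$ collapses to the single identity $\psi_+\wedge d\omega=0$. Thus, on the positive side, purely coclosedness amounts to arranging this one extra equation.

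For the positive cases I would go through Gong's list (Table~\ref{table:7Ddec}) algebra by algebra. Whenever the chosen half-flat pair already satisfies $\psi_+\wedge d\omega=0$, nothing more is needed; otherwise I would use the freedom left in the construction of Theorem~\ref{thm:construction}, either by deforming $(\omega,\psi_-)$ among the half-flat structures on $\lie{h}$, or, more flexibly, by replacing $\eta=e^7$ with $\eta=e^7+\gamma$ for a suitable $\gamma\in\lie{h}^*$. In the latter case conditions (2) and (3) become $\omega\wedge d\omega=\psi_-\wedge d\gamma$ and $\omega^2\wedge d\gamma=-2\psi_+\wedge d\omega$, which give enough room to be solved simultaneously. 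For each $\fn_i$, $i\neq 2$, I would then record an explicit triple $(\omega,\psi_-,\eta)$ and check positivity of the induced metric together with (1)--(3) using the SageMath routines developed above.

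The negative case is where the argument really lies. Crucially, $\fn_2$ does admit coclosed structures, so none of the obstructions of Section~\ref{sec:coclosed} can be invoked: the obstruction must exploit the purely coclosed condition itself. In an adapted basis the only structure equation of $\fh_3\oplus\RR^4$ is a single Heisenberg relation, say $de^3=e^{12}$. I would fix the central vector $X=e_7$, transverse to the commutator, so that the quotient $\lie{h}=\fn_2/\la X\ra$ still carries $de^3=e^{12}$, and invoke the converse Proposition~\ref{prop1}: a purely coclosed $\varphi$ yields a closed $\psi_-$ on $\lie{h}$ with $\omega\wedge d\omega=\psi_-\wedge d\eta$, while $\varphi\wedge d\varphi=0$ forces the necessary identity $\omega^2\wedge d\eta+2\psi_+\wedge d\omega\in\eta\wedge\Lambda^5(\fn_2^*)$. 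In this algebra $d\eta=b_3\,e^{12}$ and $d\omega=e^{12}\wedge\alpha$ with $\alpha\in\la e^4,e^5,e^6\ra$, so the whole constraint system becomes completely explicit. The plan is to show that every closed $\psi_-$ meeting these constraints has $\lambda(\psi_-)\geq 0$, contradicting $\psi_-\in\Lambda_-(\lie{h}^*)$. This is the mechanism of the third obstruction (Proposition~\ref{prop:third}), with the purely coclosed identity supplying exactly the extra cut that the bare coclosed constraints lacked.

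The main obstacle is this final sign analysis. Unlike the coclosed obstructions, I cannot dispose of $\fn_2$ by a single linear computation: the purely coclosed identity couples $\omega$, $\eta$ and the nonlinearly determined $\psi_+$, so the admissible $\psi_-$ are cut out by quadratic relations rather than by a linear subspace. The delicate points will be to parametrize the joint solutions $(\omega,\psi_-,\eta)$ consistently with the $\SU(3)$-positivity of the metric $h$, and to reduce the sign of $\lambda(\psi_-)$ over this family to a finite, machine-checkable statement---ideally after using the symmetries of $\fn_2$ to normalize the choices of $X$ and $\eta$.
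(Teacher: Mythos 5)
Your positive half is exactly the paper's argument: for each $\fn_i$, $i\neq 2$, the paper applies Theorem \ref{thm:construction} with an explicit triple $(\omega,\psi_-,\eta)$ (Tables \ref{table:0a} and \ref{table:0b}, verified by SageMath), using precisely the flexibility you describe of taking $\eta=e^7+\gamma$ when the half-flat pair with $\eta=e^7$ fails condition (3). No discrepancy there, granting that the explicit tensors can be found case by case, which the paper confirms.

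The genuine gap is in the negative case. The paper does not prove nonexistence for $\fn_2=\fh_3\oplus\RR^4$ at all: it cites \cite[Corollary 4.3]{BMR}, where del Barco, Moroianu and Raffero settle the $2$-step nilpotent case by independent methods. You instead propose a self-contained obstruction, and while your setup is sound --- the identity $\varphi\wedge d\varphi=(\omega^2\wedge d\eta+2\psi_+\wedge d\omega)\wedge\eta$ does persist in the converse direction of Proposition \ref{prop1} for a central $X$, and in $\fn_2$ one indeed has $d\eta=b\,e^{12}$ and $d\omega=e^{12}\wedge\alpha$ --- you never carry out the decisive step. Showing that every admissible $\psi_-$ has $\lambda(\psi_-)\geq 0$ is exactly what your proposal defers (``the plan is to show\dots'', ``the delicate points will be\dots''), and this is not a routine extension of Proposition \ref{prop:third}: that obstruction works because the admissible $(a_\alpha)$ form a \emph{linear} subspace $H$ on which one checks a sign, whereas your constraint $\omega^2\wedge d\eta+2\psi_+\wedge d\omega\in\eta\wedge\Lambda^5$ involves $\psi_+$, a nonlinear function of $\psi_-$ that is only defined on the open set $\lambda(\psi_-)<0$, coupled to the unknowns $\omega$ and $\eta$ (the metric dual of $X$, which you cannot prescribe beyond $\eta(X)\neq 0$ and $d\eta=b\,e^{12}$). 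So the admissible set is cut out by genuinely nonlinear conditions, no parametrization or normalization is actually performed, and no finite machine-checkable reduction is exhibited. As it stands, the statement that $\fn_2$ admits no purely coclosed $\Gtwo$-structure --- the only nontrivial assertion of the theorem beyond table verification --- remains unproved in your proposal; you would need either to complete this nonlinear analysis or to fall back on the citation the paper uses.
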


\begin{proof}
We apply Theorem \ref{thm:construction} to each of the Lie algebras $\fn_i$, $i=1,3,\ldots,24$, and exhibit explicitly the required tensors $\omega$, $\psi_-$ and $\eta$ satisfying conditions (1), (2) and (3).  The results are contained in Tables \ref{table:0a} and \ref{table:0b}. That $\fn_2=\mathfrak{h}_3\oplus\RR^4$ admits no purely coclosed $\Gtwo$-structure follows from \cite[Corollary 4.3]{BMR}.
\end{proof}

\begin{remark}
As we shall see, $\fn_2$ is the only 7-dimensional nilpotent Lie algebra of nilpotency step $\leq 4$ which admits a coclosed $\Gtwo$-structure but no purely coclosed structures. It turns out that $\fn_2$ admits the following purely coclosed $\Gtwo^*$-structure:
\[
\ast_\varphi\varphi=-e^{1234}+e^{2356}+e^{1456}+e^{1357}+e^{2457}-e^{1367}+e^{2467}\,.
\]
\end{remark}

\begin{table}[H]
\centering
\caption{Purely coclosed $\Gtwo$-structures on decomposable NLAs $-$ 1}\label{table:0a}
\vspace{0.25 cm}
{\tabulinesep=1.2mm
\begin{tabu}{l|c|c|c}
\toprule[1.5pt]
NLA & $\omega$ & $\psi_-$ & $\eta$\\
\specialrule{1pt}{0pt}{0pt}
$\fn_1$ & $e^{12}-e^{34}+e^{56}$ & $e^{136}+e^{145}+e^{235}-e^{246}$ & $e^7$\\
\specialrule{1pt}{0pt}{0pt}
$\fn_3$ & $-e^{12}-e^{34}-e^{45}+e^{57}$ & $e^{135}+e^{147}+e^{237}-e^{257}-e^{245}$ & $-e^6$\\
\specialrule{1pt}{0pt}{0pt}
$\fn_4$ & $e^{12}-e^{14}-2e^{34}-e^{56}$ & $e^{135}-e^{146}+e^{236}+e^{245}+e^{346}$ & $e^7+2e^5$\\
\specialrule{1pt}{0pt}{0pt}
$\fn_5$ & $e^{13}+e^{24}-e^{56}$ & $e^{125}-e^{345}+e^{146}-e^{236}+e^{126}$ & $e^7$\\
\specialrule{1pt}{0pt}{0pt}
$\fn_6$ & $e^{12}+e^{34}+e^{56}$ & $e^{135}-e^{146}-e^{236}-e^{245}$ & $e^7$\\
\specialrule{1pt}{0pt}{0pt}
$\fn_7$ & $e^{13}+e^{24}-e^{56}$ & $-e^{125}-2e^{146}+e^{236}+e^{345}$ & $e^7$\\
\specialrule{1pt}{0pt}{0pt}
$\fn_8$ & $\begin{array}{c}
-e^{12}+e^{15}-e^{26}-2e^{34}\\[1.5pt]
+3e^{36}-e^{45}
\end{array}$ & $\begin{array}{c}
e^{135}+e^{146}+e^{234}\\
-e^{256}+e^{236}-e^{345}
\end{array}$ & $e^7-3e^6$\\
\specialrule{1pt}{0pt}{0pt}
$\fn_9$ & $\begin{array}{c}
-e^{14}+e^{15}+14e^{26}+e^{34}\\[1.5pt]
-7e^{35}-e^{45}
\end{array}$ & $e^{123}+e^{125}-e^{156}+e^{245}+e^{346}$ & $e^7+28e^6$\\
\specialrule{1pt}{0pt}{0pt}
$\fn_{10}$ & $\begin{array}{c}
e^{17}-2e^{23}-e^{45}+\frac{4}{5}(e^{27}-e^{13})\\[1.5pt]
+\frac{3}{5}(e^{47}-e^{15})
\end{array}$ & $e^{125}+e^{134}-e^{247}+e^{357}$ & $e^6+\frac{4}{5}e^5$\\
\specialrule{1pt}{0pt}{0pt}
$\fn_{11}$ & $e^{16}+e^{24}-e^{35}-e^{45}$ & $
-e^{123}+e^{135}-e^{145}-e^{256}-e^{346}$ & $e^7+e^6$\\
\specialrule{1pt}{0pt}{0pt}
$\fn_{12}$ & $\begin{array}{c}
5e^{12}+10e^{14}-3e^{16}-3e^{23}-4e^{25}\\[1.5pt]
-e^{34}+e^{36}-e^{45}-e^{56}
\end{array}$ & $\begin{array}{c}
e^{123}+e^{134}+e^{136}\\
+e^{156}+e^{246}+e^{345}
\end{array}$ & $e^7$\\
\specialrule{1pt}{0pt}{0pt}
$\fn_{13}$ & $\begin{array}{c}
\frac{1}{2}e^{12}+e^{13}+\frac{1}{2}e^{16}+\frac{1}{2}e^{23}\\[1.5pt]
+e^{26}+\frac{1}{2}e^{36}-2e^{45}
\end{array}$ & $-e^{124}+e^{156}+e^{235}-e^{346}$ & $\begin{array}{c}e^7+e^5\\+e^4\end{array}$\\
\specialrule{1pt}{0pt}{0pt}
$\fn_{14}$ & $e^{13}+e^{26}-e^{45}$ & $-e^{124}+e^{156}+e^{235}-e^{346}$ & $e^7$\\
\bottomrule[1pt]
\end{tabu}}
\end{table} 

\begin{table}[H]
\centering
\caption{Purely coclosed $\Gtwo$-structures on decomposable NLAs $-$ 2}\label{table:0b}
\vspace{0.25 cm}
{\tabulinesep=1.2mm
\begin{tabu}{l|c|c|c}
\toprule[1.5pt]
NLA & $\omega$ & $\psi_-$ & $\eta$\\
\specialrule{1pt}{0pt}{0pt}
$\fn_{15}$ & $\begin{array}{c}
e^{12}+e^{15}-\frac{1}{2}e^{16}\\[1.5pt]
+\frac{1}{2}e^{25}-e^{26}+e^{34}
\end{array}$ & $-e^{123}+e^{146}+e^{236}+e^{245}-e^{356}$ & $e^7$\\
\specialrule{1pt}{0pt}{0pt}
$\fn_{16}$ & $\begin{array}{c}
3e^{13}+e^{14}+e^{16}-e^{25}\\
+e^{34}-e^{36}-e^{46}
\end{array}$ & $-e^{124}+e^{156}+e^{236}+e^{345}$ & $e^7-e^5$\\
\specialrule{1pt}{0pt}{0pt}
$\fn_{17}$ & $e^{13}+e^{14}+e^{25}-e^{36}$ & $-e^{123}-e^{156}+e^{236}+e^{246}+e^{345}$ & $e^7+e^5$\\
\specialrule{1pt}{0pt}{0pt}
$\fn_{18}$ & $\begin{array}{c}
e^{13}+e^{24}+e^{26}-\frac{1}{2}e^{34}\\
-e^{36}+\frac{1}{2}e^{45}-\frac{1}{2}e^{46}-e^{56}
\end{array}$ & $\begin{array}{c}
e^{123}+e^{125}-e^{126}+e^{146}\\
-e^{235}-e^{236}-e^{345}
\end{array}$ & $e^7+\frac{1}{2}e^4$\\
\specialrule{1pt}{0pt}{0pt}
$\fn_{19}$ & $\begin{array}{c}
\frac{5}{6}e^{13}+e^{25}+e^{46}\\[1.5pt]
+\frac{1}{2}(e^{15}+2e^{23}+e^{24}+e^{56})
\end{array}$ & $e^{124}-e^{156}+2e^{236}+e^{345}$ & $\begin{array}{c}e^7+\frac{1}{4}e^5\\[1.5pt]+\frac{3}{2}e^4\end{array}$\\
\specialrule{1pt}{0pt}{0pt}
$\fn_{20}$ & $e^{13}+3e^{24}-e^{56}+\frac{3}{2}(e^{26}+e^{45})$ & $-e^{125}-e^{146}+e^{236}+e^{345}$ & $e^7$\\
\specialrule{1pt}{0pt}{0pt}
$\fn_{21}$ & $-e^{14}+\frac{1}{2}(e^{15}+e^{24})+e^{25}+e^{36}$ & $-e^{123}+e^{156}+e^{246}-e^{256}-e^{345}$ & $e^7$\\
\specialrule{1pt}{0pt}{0pt}
$\fn_{22}$ & $e^{15}+e^{24}+e^{36}$ & $\begin{array}{c}
e^{123}-e^{134}+e^{146}\\
-e^{235}-e^{256}-2e^{345}
\end{array}$ & $e^7$\\
\specialrule{1pt}{0pt}{0pt}
$\fn_{23}$ & $e^{15}+e^{24}+e^{36}$ & $e^{123}-e^{146}+e^{256}+e^{345}$ & $e^7$\\
\specialrule{1pt}{0pt}{0pt}
$\fn_{24}$ & $-e^{14}+e^{25}+e^{36}-e^{56}$ & $2e^{123}-2e^{135}-e^{156}-e^{246}+2e^{345}$ & $e^7$\\
\bottomrule[1pt]
\end{tabu}}
\end{table} 

The following routine computes $\omega\wedge \psi_-$ and $\psi_-\wedge\psi_+-\frac{2}{3}\omega^3$ to ensure that $(\omega,\psi_-)$ defines a normalized $\SU(3)$-structure via Theorem \ref{thm:omega-psi}. It also computes $d\psi_-$, $\omega\wedge d\omega - \psi_-\wedge d\eta$
and $\omega^2\wedge d\eta+2\psi_+\wedge d\omega$, to check that the $\Gtwo$-structure $\varphi=\omega \wedge\eta+\psi_+$ is purely coclosed, via Theorem \ref{thm:construction}.

To check that the forms in Tables \ref{table:0a} and \ref{table:0b} satisfy the required conditions, we use a SageMath worksheet \cite{Sage}. The
worksheets are available in \cite{worksheets}. We include the worksheet for the Lie algebra $\fn_{15}$.

\medskip

{\tiny

\noindent \hspace{1mm} \begin{verb}
A.<x1,x2,x3,x4,x5,x6,x7> = GradedCommutativeAlgebra(QQ)
\end{verb}

\noindent \hspace{1mm} \begin{verb}
M=A.cdg_algebra({ x4:x1*x2, x5:x1*x3, x6: x1*x4+x3*x5 })
\end{verb}

\noindent \hspace{1mm} \begin{verb}
M.inject_variables()
\end{verb}

\noindent \hspace{1mm} \begin{verb}
omega=x1*x7+x2*x3+2*x3*x4-x1*x4+x3*x5+x2*x7-4*x1*x5
\end{verb}

\noindent \hspace{1mm} \begin{verb}
psi=x1*x2*x4-x1*x2*x5+x1*x4*x7+x1*x5*x7+x2*x3*x4+x2*x3*x5-x3*x5*x7
\end{verb}

\noindent \hspace{1mm} \begin{verb}
psiplus=-2*(3*x1*x2*x4-x2*x3*x4+5*x1*x2*x5+3*x2*x3*x5-x1*x4*x7+2*x3*x4*x7+3*x1*x5*x7+x3*x5*x7)
\end{verb}

\noindent \hspace{1mm} \begin{verb}
eta=x6
\end{verb}

\noindent \hspace{1mm} \begin{verb}
omega*psi
\end{verb}

\noindent \hspace{1mm} \begin{verb}
psi*psiplus-(2/3)*omega^3
\end{verb}

\noindent \hspace{1mm} \begin{verb}
psi.differential()
\end{verb}

\noindent \hspace{1mm} \begin{verb}
omega*omega.differential()-psi*eta.differential()
\end{verb}

\noindent \hspace{1mm} \begin{verb}
omega^2*eta.differential()+2*psiplus*omega.differential()
\end{verb}

}



\subsection{Indecomposable 2-step nilpotent Lie algebras} \label{subsec:sage}

The case of indecomposable $2$-step nilpotent Lie algebras was also tackled in \cite{BFF}. According to Gong's classification \cite{Gong}, there are 9 indecomposable $2$-step nilpotent Lie algebras, see Table \ref{7d-ind-2step}.

We have the following result:
\begin{theorem}\cite[Theorem 5.1]{BFF}\label{indecomposable-cocalibrated}
Up to isomorphism, the indecomposable $2$-step nilpotent Lie algebras carrying a coclosed $\Gtwo$-structure are $17$, $37A$, $37B$, $37B_1$, $37C$, $37D$, and $37D_1$.
\end{theorem}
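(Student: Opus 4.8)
The plan is to prove the theorem by exhaustion through the nine indecomposable $2$-step nilpotent Lie algebras of Gong's list (Table \ref{7d-ind-2step}), splitting the argument into an \emph{existence} part for the seven algebras $17$, $37A$, $37B$, $37B_1$, $37C$, $37D$, $37D_1$, and a \emph{non-existence} part for the two remaining ones. For each algebra I fix a nilpotent coframe $\{e^1,\dots,e^7\}$ from the Chevalley--Eilenberg presentation. For a $2$-step algebra the derived ideal $[\fg,\fg]$ is central and is spanned by those $e_i$ whose duals are exactly the non-closed generators; this furnishes an ample supply of central vectors $X\in\lie{z}(\fg)$ to feed both into the construction and into the obstructions.

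For the existence direction I would apply Theorem \ref{thm:construction} directly. For each of the seven admitting algebras I exhibit a central coorienting vector $X$ together with forms $\omega\in\Lambda^2\fg^*$ and $\psi_-\in\Lambda^3\fg^*$ with $\imath_X\omega=\imath_X\psi_-=0$ defining an $\SU(3)$-structure on $V=X^\perp$, and a $1$-form $\eta$ with $\eta(X)\neq 0$. Since only the coclosed (and not the purely coclosed) property is required, it suffices to verify conditions (1) $d\psi_-=0$ and (2) $\omega\wedge d\omega=\psi_-\wedge d\eta$; condition (3) is ignored. The cleanest sub-case is to take $\eta=e^7$ closed, so that (2) collapses to the half-flat equation $\omega\wedge d\omega=0$, and then to choose $\omega,\psi_-$ adapted to the two or three non-closed generators so that $d\psi_-=0$ holds automatically. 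That an $\SU(3)$-structure is genuinely defined is checked, as in Theorem \ref{thm:omega-psi}, through $\omega\wedge\psi_-=0$ together with the normalization $\psi_-\wedge\psi_+=\tfrac{2}{3}\omega^3$, where $\psi_+$ is recovered from $\psi_-$ by the $K^*$-formula established earlier; this is a finite linear-algebra verification, readily automated in the SageMath worksheets.

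For the non-existence direction I would invoke the converse Proposition \ref{prop1} together with the obstructions of Section \ref{sec:coclosed}. Given a putative coclosed $\Gtwo$-structure, Proposition \ref{prop1} forces $d\psi_-=0$ and $\omega\wedge d\omega=\psi_-\wedge d\eta$ for the induced data on $\fh=\fg/\la X\ra$, while $\phi=*_\varphi\varphi$ is a closed $4$-form. The strategy is then to compute, for a well-chosen central vector, the space of closed $4$-forms and to show that it lands in a subspace to which one of the three obstructions applies: either Corollary \ref{cor:obs3}, by producing linearly independent $X,Y$ with $Y$ central such that $\imath_X\imath_Y z_\alpha$ lies in some $U=\la e^i\ra\wedge\fg^*$ with $\Lambda^2U=0$; or its sharpened form Corollary \ref{cor:lem2}, when every closed $4$-form lies in $\la e^1,e^2\ra\wedge\Lambda^3(\fg^*)$; or the positivity obstruction Proposition \ref{prop:third}, by checking $\lambda(\sum a_\alpha z_\alpha)\geq 0$ over the relevant linear slice $H$ of closed $3$-forms, which contradicts $\psi_-\in\Lambda_-(V^*)$.

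I expect the non-existence part to be the main obstacle. Exhibiting one explicit structure per admitting algebra is routine, but ruling out \emph{all} $\Gtwo$-structures on the two exceptional algebras requires an honest cohomological computation of $H^4(\Lambda\fg^*)$ (or of the closed $3$-forms, for the third obstruction) and the verification that the chosen algebraic condition---$\Lambda^2U=0$, containment in $\la e^1,e^2\ra\wedge\Lambda^3(\fg^*)$, or the sign-definiteness of $\lambda$---holds for the entire space, not merely for a convenient set of generators. Handling the exact part of the cohomology correctly (using that $\imath_Y$ annihilates exact $4$-forms when $Y$ is central, as in the proof of Corollary \ref{cor:obs3}) and identifying, for each of the two exceptional algebras, which of the three obstructions actually closes the argument, is the delicate point.
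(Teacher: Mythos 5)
Your proposal is correct and is essentially the approach taken here: the paper quotes this statement from \cite[Theorem 5.1]{BFF}, and both BFF's argument and this paper's own treatment run exactly along your lines --- explicit data $(\omega,\psi_-,\eta)$ verifying the conditions of Theorem \ref{thm:construction} for the seven positive cases (Table \ref{table:1} in fact realizes the stronger purely coclosed condition), and the first obstruction, Corollary \ref{cor:obs3} with central $Y=e_7$, $X=e_6$ and $U=\la e^{13}\ra$, to exclude $27A$ and $27B$ (Section \ref{section:nococlosed}). One small caution: certifying a genuine $\SU(3)$-structure requires, beyond $\omega\wedge\psi_-=0$ and the normalization $\psi_-\wedge\psi_+=\frac{2}{3}\omega^3$, also $\lambda(\psi_-)<0$ and positive-definiteness of $h$, which your appeal to Theorem \ref{thm:omega-psi} implicitly covers but should be checked explicitly in each case.
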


It follows from \cite[Theorem 1.4]{BMR} that these Lie algebras also admit a purely coclosed $\Gtwo$-structure. We are going to give a different proof of this fact, by applying Theorem \ref{thm:construction} and exhibiting explicitly the required tensors $\omega$, $\psi_-$ and $\eta$ satisfying conditions (1), (2) and (3).

\begin{table}[H]
\centering
\caption{Purely coclosed $\Gtwo$-structures on indecomposable 2-step NLAs}\label{table:1}
\vspace{0.25 cm}
{\tabulinesep=1.2mm
\begin{tabu}{l|c|c|c}
\toprule[1.5pt]
NLA & $\omega$ & $\psi_-$ & $\eta$\\
\specialrule{1pt}{0pt}{0pt}
$17$ & $-e^{12}+\frac{1}{2}e^{34}-e^{56}$ & $e^{135}+e^{146}-e^{236}+e^{245}$ & $e^7$\\
\specialrule{1pt}{0pt}{0pt}
$37A$ & $-e^{12}+e^{34}-e^{56}$ & $-e^{136}+e^{145}-e^{235}-e^{246}$ & $-e^7+e^5$\\
\specialrule{1pt}{0pt}{0pt}
$37B$ & $e^{13}+e^{24}-e^{67}$ & $e^{127}-e^{146}+e^{236}-e^{347}$ & $e^7+e^5$\\
\specialrule{1pt}{0pt}{0pt}
$37B_1$ & $-e^{12}-2e^{34}+e^{67}$ & $-e^{137}+e^{146}+e^{236}+e^{247}$ & $2e^5$\\
\specialrule{1pt}{0pt}{0pt}
$37C$ & $e^{12}-e^{34}-e^{67}$ & $e^{136}-e^{147}+e^{237}+e^{246}$ & $e^5$\\
\specialrule{1pt}{0pt}{0pt}
$37D$ & $-e^{12}+e^{34}-e^{67}$ & $e^{136}-e^{146}-e^{137}+2e^{147}+e^{246}-e^{237}$ & $e^5$\\
\specialrule{1pt}{0pt}{0pt}
$37D_1$ & $e^{12}+e^{34}+e^{67}$ & $e^{137}+e^{146}+e^{236}-e^{247}$ & $e^5$\\
\bottomrule[1pt]
\end{tabu}}
\end{table} 

Therefore we confirm:

\begin{theorem}\label{2step-indecomposable-pure}
All $7$-dimensional indecomposable 2-step nilpotent Lie algebra admit a purely coclosed $\Gtwo$-structure, except for $27A$ and $27B$.
\end{theorem}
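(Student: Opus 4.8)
The statement to establish is Theorem \ref{2step-indecomposable-pure}: among the nine indecomposable $2$-step nilpotent Lie algebras of dimension $7$, all admit a purely coclosed $\Gtwo$-structure except $27A$ and $27B$. My plan splits naturally into a positive half (exhibiting structures) and a negative half (proving non-existence), and each half feeds off results already established in the excerpt.

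\emph{Positive half.} For the seven algebras $17$, $37A$, $37B$, $37B_1$, $37C$, $37D$, $37D_1$ listed in Theorem \ref{indecomposable-cocalibrated} as carrying coclosed $\Gtwo$-structures, my aim is to promote each to a purely coclosed one. The mechanism is Theorem \ref{thm:construction}: for each algebra I would choose a central vector $X$ (cooriented by a suitable $\eta$ with $\eta(X)\neq 0$) and produce explicit forms $\omega\in\Lambda^2\fg^*$, $\psi_-\in\Lambda^3\fg^*$ with $\imath_X\omega=\imath_X\psi_-=0$ that define an $\SU(3)$-structure on the codimension-$1$ subspace $V$, and then verify the three conditions $d\psi_-=0$, $\omega\wedge d\omega=\psi_-\wedge d\eta$, and $\omega^2\wedge d\eta=-2\psi_+\wedge d\omega$. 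These forms are precisely what is tabulated in Table \ref{table:1}; so the positive half reduces to the claim that the entries of that table satisfy all the stated conditions. The verification is mechanical once $\psi_+$ is computed from $(\omega,\psi_-)$ via the $\SU(3)$-formula of Theorem \ref{thm:omega-psi}, and I would carry it out with the SageMath worksheet routine already described at the end of Subsection \ref{subsec:dec} (checking $\omega\wedge\psi_-=0$, the normalization $\psi_-\wedge\psi_+=\frac23\omega^3$, and the three torsion conditions).

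\emph{Negative half.} The heart of the theorem is showing that $27A$ and $27B$ admit \emph{no} purely coclosed $\Gtwo$-structure. Here I would invoke the obstruction machinery of Section \ref{sec:coclosed}. In fact, by Theorem \ref{indecomposable-cocalibrated} the algebras $27A$ and $27B$ are precisely the two indecomposable $2$-step algebras that carry no \emph{coclosed} $\Gtwo$-structure at all — and since purely coclosed structures are a subclass of coclosed ones, non-existence of coclosed structures immediately forbids purely coclosed ones. Thus the negative half is really the assertion that $27A$ and $27B$ fall outside the list of Theorem \ref{indecomposable-cocalibrated}. To make the proof self-contained I would re-derive this for each of the two algebras using the concrete obstructions: compute $H^4(\Lambda\fg^*)$ (or the space of closed $4$-forms) and check that the hypotheses of Corollary \ref{cor:obs3} or Corollary \ref{cor:lem2} hold — namely that one can find independent $X,Y$ with $Y$ central such that $\imath_X\imath_Y z_\alpha$ lies in a subspace $U$ with $\Lambda^2 U=0$, or that every closed $4$-form lies in $\la e^1,e^2\ra\wedge\Lambda^3\fg^*$.

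\emph{Main obstacle.} The positive half is routine bookkeeping; the real work, and the part most likely to require care, is the negative half — specifically choosing the right vectors $X,Y$ (or the right pair $e^1,e^2$) for $27A$ and $27B$ so that the closed $4$-forms all land in the degenerate subspace $U$. This depends delicately on the structure equations from Gong's classification, and if the second-obstruction pattern of Corollary \ref{cor:lem2} does not directly apply, I would fall back on the third obstruction of Proposition \ref{prop:third}, computing the subspace $H$ and verifying $\lambda(\sum a_\alpha z_\alpha)\geq 0$ throughout it. I expect that one of these three obstructions applies cleanly to each of $27A$, $27B$, and would simply report which one works together with the explicit cohomology computation, deferring the linear-algebra details to the accompanying worksheets.
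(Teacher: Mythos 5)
Your proposal is correct and follows essentially the same route as the paper: the positive cases are handled by exhibiting the tensors $(\omega,\psi_-,\eta)$ of Table \ref{table:1} and verifying conditions (1)--(3) of Theorem \ref{thm:construction} by the SageMath routine, while non-existence for $27A$ and $27B$ follows since they admit no coclosed $\Gtwo$-structure at all, which the paper confirms in Section \ref{section:nococlosed} via the first obstruction (Corollary \ref{cor:obs3}, with $X=e_6$, $Y=e_7$ and $U=\la e^{13}\ra$ for $27A$) --- exactly the first option in your fallback list.
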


\subsection{Indecomposable 3-step nilpotent Lie algebras}

We move now to the unknown territory of 7D indecomposable 3-step nilpotent Lie algebras. According to Gong \cite{Gong} there are 52 of them, listed in Tables \ref{7d-ind-3step} and \ref{7d-ind-3step-2}. Two of them, $147E(\lambda)$ and $147E_1(\lambda)$, depend on a real parameter $\lambda$; there are conditions on the value of the parameter, which are included in Appendix \ref{appendix}.

We have the following result:

\begin{theorem}\label{3step-indecomposable-pure}
All $7$-dimensional indecomposable 3-step nilpotent Lie algebra admit a purely coclosed $\Gtwo$-structure, except for: $247E$, $247E_1$, $247G$, $247H$, $247H_1$, $247K$, $247R$, $247R_1$, $257B$, $257D$, $257E$, $257G$, $257H$, $257K$, $257L$, $357B$ and $357C$.
\end{theorem}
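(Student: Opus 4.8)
The plan is to follow the same exhaustive, case-by-case strategy that the paper has established for the decomposable and 2-step cases, now applied to the 52 indecomposable 3-step nilpotent Lie algebras from Gong's classification in Tables \ref{7d-ind-3step} and \ref{7d-ind-3step-2}. The proof splits cleanly into a \emph{positive} part (exhibit a purely coclosed structure) and a \emph{negative} part (rule one out). For the positive part, for each Lie algebra not on the exceptional list I would produce an explicit triple $(\omega,\psi_-,\eta)$ and invoke Theorem \ref{thm:construction}: verify that $(\omega,\psi_-)$ defines a normalized $\SU(3)$-structure on the codimension-one subspace $V=\ker\eta$ cooriented by a central vector $X$ (checking $\omega\wedge\psi_-=0$ and $\lambda(\psi_-)<0$ with the correct normalization $\psi_-\wedge\psi_+=\tfrac23\omega^3$, computing $\psi_+$ via the $C_{xyz}$ machinery of Proposition \ref{prop:Cefgh} and the subsequent formulas), then check conditions (1) $d\psi_-=0$, (2) $\omega\wedge d\omega=\psi_-\wedge d\eta$, and (3) $\omega^2\wedge d\eta=-2\psi_+\wedge d\omega$. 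These verifications are entirely mechanical, so I would automate them with the SageMath worksheets of \cite{worksheets}, exactly as done for $\fn_{15}$ in the decomposable case; the output for all the affirmative cases would be collected into tables analogous to Tables \ref{table:0a}, \ref{table:0b} and \ref{table:1}.

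For the negative part, covering the seventeen exceptional algebras $247E,\ldots,357C$, I would deploy the three obstructions developed in Section \ref{sec:coclosed}. For each such algebra I would first compute the space of closed $4$-forms (equivalently, via Proposition \ref{prop1}, analyze the possible $\SU(3)$-data $(\omega,\psi_-,\eta)$ forced by a coclosed structure) and then try to trigger one of: Corollary \ref{cor:obs3} (find linearly independent $X,Y$ with $Y$ central such that $\imath_X\imath_Y z_\alpha$ lands in a subspace $U$ with $\Lambda^2U=0$, forcing $(\imath_X\imath_Y\kappa)^2=0$ for every closed $4$-form $\kappa$); Corollary \ref{cor:lem2} (arrange a nilpotent coframe so that all generators of the closed $4$-forms lie in $\la e^1,e^2\ra\wedge\Lambda^3\fg^*$, contradicting $(\ast\omega)|_W\neq0$ on a $J$-invariant $4$-plane); or Proposition \ref{prop:third} (build a complement $W$ to chosen $w_1,\ldots,w_\ell$ in $\Lambda^5(\fh^*)$ so that the subspace $H$ defined by the last equation of Proposition \ref{prop1} satisfies $\lambda(\sum a_\alpha z_\alpha)\geq0$ on all of $H$, contradicting $\psi_-\in\Lambda_-$, i.e.\ $\lambda(\psi_-)<0$). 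Which obstruction applies will depend on the algebra, and choosing the right $X$, $Y$, $U$, or $W$ is the part requiring genuine input; these are again recorded in commented worksheets.

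I expect the main obstacle to be the negative part, and specifically the choice of the correct central vector and auxiliary subspaces for each exceptional algebra. The positive cases reduce to a finite search that, once a candidate triple is found, is verified automatically; the difficulty there is only in locating the forms, which the paper notes is partly shortened by Bagaglini's preprint \cite{Bagaglini}. By contrast, each impossibility proof must identify a structural feature of the particular Lie algebra—typically the distribution of $e^1,e^2$-factors across the closed $4$-forms, or a vanishing pattern of $\lambda$ on the closed-$3$-form space—that is not uniform across the seventeen cases and must be discovered individually. A subtlety worth flagging is that the two one-parameter families $147E(\lambda)$ and $147E_1(\lambda)$ admit purely coclosed structures for \emph{all} admissible $\lambda$ (so the exhibited $\omega,\psi_-,\eta$ must be given with $\lambda$ as a free parameter, and the Sage verification carried out symbolically over the parameter), with any special values of $\lambda$—which the paper asserts are always rational—handled separately; ensuring the affirmative construction is genuinely uniform in $\lambda$ is the one place where the otherwise routine positive part requires care.
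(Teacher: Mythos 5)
Your proposal follows essentially the same route as the paper: explicit triples $(\omega,\psi_-,\eta)$ verified through Theorem \ref{thm:construction} by SageMath worksheets for the positive cases (with the parameter families $147E(\lambda)$, $147E_1(\lambda)$ handled symbolically, exactly as the paper does with a separate exterior-algebra worksheet), and the obstructions of Section \ref{sec:coclosed} for the seventeen exceptions, where the paper in fact uses Corollary \ref{cor:obs3} for the $247$ and $257$ families and Corollary \ref{cor:lem2} for $357B$ and $357C$, reserving Proposition \ref{prop:third} for 4-step cases only. One slight imprecision worth noting, though it does not affect the argument: in the construction $V$ is a codimension-one complement of the central vector $X$ on which $\imath_X\omega=\imath_X\psi_-=0$, and $\eta$ need not vanish on $V$ (e.g.\ $\eta=e^7+e^5$ for $37B$), so $V$ is generally not $\ker\eta$.
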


{\footnotesize 
\begin{table}[H]
\centering
\caption{Purely coclosed $\Gtwo$-structures on indecomposable 3-step NLAs $-$ 1}\label{table:2}
\vspace{0.25 cm}
{\tabulinesep=1.2mm
\begin{tabu}{l|c|c|c}
\toprule[1.5pt]
NLA & $\omega$ & $\psi_-$ & $\eta$\\
\specialrule{1pt}{0pt}{0pt}
$137A$ & $e^{13}-e^{24}+e^{56}$ & $e^{126}-e^{145}-e^{235}-e^{345}+e^{346}$ & $e^7$\\
\specialrule{1pt}{0pt}{0pt}
$137A_1$ & $e^{12}+e^{34}+e^{56}$ & $
-e^{246}+e^{136}+e^{145}+e^{235}$ & $e^7$\\
\specialrule{1pt}{0pt}{0pt}
$137B$ & $\begin{array}{c}
e^{13}-\frac{3}{4}e^{24}+e^{56}\\[1.5pt]
-\frac{1}{2}(e^{15}+e^{36})
\end{array} $
& $e^{126}-e^{145}-e^{235}-e^{345}+e^{346}$ & $e^7$\\
\specialrule{1pt}{0pt}{0pt}
$137B_1$ & $e^{12}+e^{34}+e^{56}$ & $2e^{136}+e^{145}+e^{235}-e^{246}$ & $-\frac{1}{2}e^7$\\
\specialrule{1pt}{0pt}{0pt}
$137C$ & $\begin{array}{c}
e^{12}+\frac{2}{3}e^{13}-2e^{24}\\
-e^{34}+e^{56}
\end{array}$ & $e^{126}-e^{145}-e^{235}+e^{346}$ & $e^7-e^5$\\
\specialrule{1pt}{0pt}{0pt}
$137D$ & $e^{13}-e^{24}+e^{56}$ & $e^{126}-e^{145}-e^{235}+e^{346}$ & $-3e^7$\\
\specialrule{1pt}{0pt}{0pt}
$147A$ & $e^{13}-e^{25}-e^{46}$ & $\begin{array}{c}
-e^{123}-2e^{126}+e^{145}-e^{234}\\
+e^{246}-e^{356}
\end{array}$ & $-\frac{1}{2}(e^7+e^5)$\\
\specialrule{1pt}{0pt}{0pt}
$147A_1$ & $e^{13}-e^{25}-e^{46}$ & 
$\begin{array}{c}
-e^{123}-2e^{126}+e^{145}-e^{234}\\
+e^{246}-e^{356}
\end{array}$ & $e^7-2e^4$\\
\specialrule{1pt}{0pt}{0pt}
$147B$ & $\begin{array}{c}
-e^{13}+e^{14}+e^{16}+e^{23}\\
+3e^{24}+e^{25}+e^{46}+e^{56}
\end{array}$ & 
$e^{124}+e^{126}+e^{145}+e^{235}+e^{256}+e^{346}$ & $\begin{array}{c}
-\frac{1}{3}(e^7+e^5)\\
-2e^4
\end{array}$\\
\specialrule{1pt}{0pt}{0pt}
$147D$ & $\begin{array}{c}
-2e^{12}-6e^{23}-3e^{25}\\
+e^{34}-2e^{36}-e^{56}
\end{array}$
& $\begin{array}{c}
-e^{123}-2e^{134}-e^{136}+e^{145}\\
-e^{246}-e^{356}
\end{array}$ & $\begin{array}{c}
\frac{1}{7}(e^7-2e^6)\\
-e^4
\end{array}$\\
\specialrule{1pt}{0pt}{0pt}
$147E(\lambda)$ & $e^{13}-e^{26}-e^{34}+e^{45}$ & $-e^{123}-2e^{125}-e^{146}+e^{245}+e^{356}$ & $\frac{3}{1-\lambda}e^7+7e^6$ \\
\specialrule{1pt}{0pt}{0pt}
$147E_1(\lambda)$ & $e^{13}+e^{26}+e^{34}-e^{45}$ & $e^{123}+2e^{125}+e^{146}+e^{245}+e^{356}$ & $-\frac{1}{2}e^7-e^6$ \\
\specialrule{1pt}{0pt}{0pt}
$157$ & $\begin{array}{c}
		2e^{12}+e^{15}+e^{26}\\
		+e^{34}+e^{56}
	\end{array}$
& $\begin{array}{c}
e^{124}+e^{135}-e^{145}-e^{146}\\
-e^{235}-e^{236}-e^{245}
\end{array}$ & $e^7-2e^3$\\
\specialrule{1pt}{0pt}{0pt}
$247A$ & $e^{12}+e^{34}+e^{56}$
& $e^{135}-e^{146}-e^{236}-e^{245}$ & $e^7$\\
\specialrule{1pt}{0pt}{0pt}
$247B$ & $\begin{array}{c}
		e^{12}+e^{26}-e^{35}\\
		+e^{45}+e^{46}
	\end{array}$
& $\begin{array}{c}
-e^{123}+e^{124}-2e^{125}-e^{126}+e^{134}\\
-e^{135}-e^{136}-e^{145}+e^{156}-e^{235}\\
-e^{236}-e^{245}
\end{array}$ & $\begin{array}{c}
e^7+e^6\\
+3e^5+4e^4
\end{array}$\\
\specialrule{1pt}{0pt}{0pt}
$247C$ & $e^{13}+e^{25}+e^{47}$
& $e^{124}-e^{157}+e^{237}+e^{345}$ & $e^6+e^4$\\
\specialrule{1pt}{0pt}{0pt}
$247D$ & $e^{12}-e^{34}-e^{56}$
& $e^{136}+e^{145}-e^{235}+e^{246}$ & $e^7+e^4$\\
\specialrule{1pt}{0pt}{0pt}
$247F$ & $\begin{array}{c}
		e^{15}+e^{16}-e^{25}\\
		-e^{34}+e^{56}
	\end{array}$
& $\begin{array}{c}
e^{123}-e^{124}+e^{145}+2e^{235}\\
+e^{236}+e^{246}-e^{356}
\end{array}$ & $3e^7-e^4$\\
\specialrule{1pt}{0pt}{0pt}
$247F_1$ & $-e^{13}-e^{16}+\frac{3}{2}e^{23}+e^{26}-e^{45}$ & $e^{125}+e^{246}-e^{134}+e^{356}$ & $e^7-e^5$\\
\specialrule{1pt}{0pt}{0pt}
$247I$ & $e^{15}-\frac{1}{2}e^{16}-e^{23}-\frac{1}{2}(e^{45}-e^{46})$ & 
	$e^{126}-e^{134}+e^{245}-e^{356}$ & $e^7+e^4$\\
\specialrule{1pt}{0pt}{0pt}
$247J$ & $e^{13}-e^{25}-e^{46}$
& $\begin{array}{c}
-e^{124}+e^{126}-e^{145}+e^{236}\\
+2e^{345}+e^{356}
\end{array}$ & $e^7+6e^5$\\
\specialrule{1pt}{0pt}{0pt}
$247L$ & $e^{13}-e^{24}+e^{57}$ & $e^{125}+e^{345}+e^{147}+e^{237}$ & $\frac{1}{2}e^6$\\
\bottomrule[1pt]
\end{tabu}}
\end{table}

\begin{table}[H]
\centering
\caption{Purely coclosed $\Gtwo$-structures on indecomposable 3-step NLAs $-$ 2}\label{table:2a}
\vspace{0.25 cm}
{\tabulinesep=1.2mm
\begin{tabu}{l|c|c|c}
\toprule[1.5pt]
NLA & $\omega$ & $\psi_-$ & $\eta$\\
\specialrule{1pt}{0pt}{0pt}
$247M$ & $e^{15}+e^{23}+e^{35}-2e^{46}$ & 
$\begin{array}{c}
-2e^{124}-2e^{126}-e^{134}-2e^{136}-e^{145}\\
+e^{156}+e^{234}+e^{236}+e^{245}+e^{345}
\end{array}$ & $\begin{array}{c}
e^7+2e^5\\-2e^4
\end{array}$\\
\specialrule{1pt}{0pt}{0pt}
$247N$ & $-e^{14}-e^{25}-e^{37}-\frac{1}{2}(e^{17}+e^{34})$ & $-e^{123}+e^{157}-e^{247}+e^{345}$ & $e^6$\\
\specialrule{1pt}{0pt}{0pt}
$247O$ & $\begin{array}{c}
e^{12}+e^{13}+e^{15}-e^{24}\\
-\frac{1}{2}(e^{27}-e^{45})+e^{57}
\end{array}$ & $e^{125}+e^{345}+e^{147}+e^{237}+e^{245}-e^{357}$ & $\frac{5}{4}e^6$\\
\specialrule{1pt}{0pt}{0pt}
$247P$ & $e^{17}+e^{23}+e^{45}$ & $e^{125}+e^{247}+e^{134}-e^{357}$ & $-2e^6$\\
\specialrule{1pt}{0pt}{0pt}
$247P_1$ & $\begin{array}{c}
\frac{3}{2}(e^{15}+e^{47})+e^{17}+e^{23}\\
+e^{25}-e^{34}+\frac{9}{2}e^{45}
\end{array}$ & $e^{125}+e^{247}+e^{134}-e^{357}$ & $e^6-\frac{3}{2}e^5$\\
\specialrule{1pt}{0pt}{0pt}
$247Q$ & $e^{14}+e^{23}+e^{57}$ & $e^{125}-e^{137}+e^{247}+e^{345}$ & $-e^6$\\
\specialrule{1pt}{0pt}{0pt}
$257A$ & $-(e^{12}+e^{34}-e^{56})$ & $e^{136}-e^{145}-e^{235}-e^{246}$ & $e^7$\\
\specialrule{1pt}{0pt}{0pt}
$257C$ & $\begin{array}{c}
		e^{13}-e^{14}+e^{25}-e^{26}\\
		-e^{36}-e^{45}+3e^{46}
	\end{array}$ & $e^{124}-e^{156}+e^{236}+e^{345}$ & $2e^7-e^6-2e^3$\\
	\specialrule{1pt}{0pt}{0pt}
$257F$ & $-2e^{15}-e^{16}+3e^{24}+e^{35}+e^{56}$
& $\begin{array}{c}
e^{134}+e^{236}-e^{456}\\
-e^{345}+e^{126}+e^{235}
\end{array}$ & $-4e^7$\\
\specialrule{1pt}{0pt}{0pt}
$257I$ & $e^{12}-e^{35}-e^{45}+e^{46}$
& $e^{134}+e^{156}-e^{236}-e^{245}-e^{246}$ & $e^7-e^3$\\
\specialrule{1pt}{0pt}{0pt}
$257J$ & $e^{13}+e^{25}+e^{46}$
& $-e^{124}+e^{156}-e^{236}-e^{345}$ & $e^7+e^3$\\
\specialrule{1pt}{0pt}{0pt}
$257J_1$ & $\begin{array}{c}
e^{15}-e^{46}+\frac{1}{2}(e^{12}+e^{35})\\
+\frac{1}{2}(-e^{14}-e^{34}+e^{56})
\end{array}$
& $e^{124}+e^{136}-e^{234}-e^{256}-e^{345}$ & $\begin{array}{c}
-\frac{1}{8}e^7+\frac{1}{4}e^6\\[1.5pt]
-\frac{1}{8}e^3\end{array}$\\
\specialrule{1pt}{0pt}{0pt}
$357A$ & $-e^{13}+2e^{14}+e^{25}-2e^{36}+e^{46}$
& $e^{123}-e^{246}+e^{156}-e^{345}$ & $e^7+e^5$\\
\bottomrule[1pt]
\end{tabu}}
\end{table}
}

%

%

For the families $147E(\lambda)$ and $147E_1(\lambda)$ we cannot use the usual Sage worksheets, since the DGA package does not admit
parameters in the definition of the differential. Instead we work with the following worksheet, inspired by \cite{Angella}, that implements an older way to deal with DGAs, less user-friendly, but that allows variables in the definition of the differential. For instance, $147E(\lambda)$ is implemented as follows:

{\tiny
\noindent \hspace{1mm} \begin{verb}
E = ExteriorAlgebra(SR,'x',8)
\end{verb}

\noindent \hspace{1mm} \begin{verb}
l=var('l')
\end{verb}

\noindent \hspace{1mm} \begin{verb}
str_eq={(1,2):E.gens()[4],(2,3):E.gens()[5],(1,3):-E.gens()[6],(1,5):-E.gens()[7],(2,6):l*E.gens()[7],
\end{verb}

\begin{verb}
(3,4):(1-l)*E.gens()[7]}
\end{verb}

\noindent \hspace{1mm} \begin{verb}
d=E.coboundary(str_eq); d
\end{verb}

\noindent \hspace{1mm} \begin{verb}
print([d(b) for b in E.gens( )])
\end{verb}


\noindent \hspace{1mm} \begin{verb}
omega=E.gens()[1]*E.gens()[3]-E.gens()[2]*E.gens()[6]-E.gens()[3]*E.gens()[4]+E.gens()[4]*E.gens()[5]
\end{verb}

\noindent \hspace{1mm} \begin{verb}
psi=-E.gens()[1]*E.gens()[2]*E.gens()[3]-2*E.gens()[1]*E.gens()[2]*E.gens()[5]
\end{verb}

\begin{verb}
-E.gens()[1]*E.gens()[4]*E.gens()[6]+E.gens()[2]*E.gens()[4]*E.gens()[5]
\end{verb}

\begin{verb}
+E.gens()[3]*E.gens()[5]*E.gens()[6]
\end{verb}

\noindent \hspace{1mm} \begin{verb}
eta=(1/(l-1))*E.gens()[7]- E.gens()[6]
\end{verb}

\noindent \hspace{1mm} \begin{verb}
omega*psi
\end{verb}

\noindent \hspace{1mm} \begin{verb}
psi*psiplus-(2/3)*omega^3
\end{verb}

\noindent \hspace{1mm} \begin{verb}
d(psi)
\end{verb}

\noindent \hspace{1mm} \begin{verb}
omega*d(omega)-psi*d(eta)
\end{verb}

\noindent \hspace{1mm} \begin{verb}
omega^2*d(eta)+2*psiplus*d(omega)
\end{verb}

}

\subsection{Indecomposable 4-step nilpotent Lie algebras}

We deal next with 7D indecomposable 4-step nilpotent Lie algebras. According to Gong \cite{Gong} there are 43 of them, listed in Tables \ref{7d-ind-4step-1} and \ref{7d-ind-4step-2}. Four of them ($1357M(\lambda)$, $1357N(\lambda)$, $1357QRS_1(\lambda)$ and $1357S(\lambda)$) depend on a real parameter $\lambda$; there are conditions on the value of the parameter, which are included in Appendix \ref{appendix}.

We have the following result:

\begin{theorem}\label{4step-indecomposable-pure}
All $7$-dimensional indecomposable $4$-step nilpotent Lie algebra admit a purely coclosed $\Gtwo$-structure, except for: $1357E$, $1457A$, $1457B$ and $1357N(-2)$.
\end{theorem}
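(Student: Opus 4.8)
The plan is to proceed exactly as in the proofs of Theorems \ref{thm:decomposable-pure}, \ref{2step-indecomposable-pure} and \ref{3step-indecomposable-pure}, namely by exhaustion through Gong's list of the $43$ indecomposable $4$-step nilpotent Lie algebras. The argument splits according to whether a given Lie algebra admits a purely coclosed $\Gtwo$-structure. A key observation that organizes the negative cases is that, since purely coclosed structures form a subclass of coclosed ones, to rule out a purely coclosed $\Gtwo$-structure on the four exceptions it suffices to rule out any coclosed one; conversely, this is consistent with the main theorem, so the four exceptions are precisely the step-$4$ indecomposable algebras carrying no coclosed $\Gtwo$-structure at all.

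For each $\fg$ other than the four exceptions, I would exhibit an explicit purely coclosed $\Gtwo$-structure and invoke Theorem \ref{thm:construction}. Concretely, I choose a central vector $X$, set $V=X^\perp$, and search for $\omega\in\Lambda^2\fg^*$ and $\psi_-\in\Lambda^3\fg^*$ with $\imath_X\omega=\imath_X\psi_-=0$ defining a normalized $\SU(3)$-structure on $V$, checked via $\omega\wedge\psi_-=0$ and $\psi_-\wedge\psi_+=\tfrac23\omega^3$ using Theorem \ref{thm:omega-psi}, together with an $\eta$ satisfying $\eta(X)\neq 0$. The triple must then satisfy the three conditions $d\psi_-=0$, $\omega\wedge d\omega=\psi_-\wedge d\eta$ and $\omega^2\wedge d\eta=-2\psi_+\wedge d\omega$. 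These verifications are mechanical, so I would record them in SageMath worksheets and tabulate the resulting forms as in the earlier cases.

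For the four exceptional algebras $1357E$, $1457A$, $1457B$ and $1357N(-2)$, I would apply the three obstructions of Section \ref{sec:coclosed}. For each one I compute the relevant cohomological or closed-form data and search for the appropriate certificate: a pair $X,Y$ with $Y$ central and all $\imath_X\imath_Y z_\alpha$ lying in a subspace $U\subset\Lambda^2\fg^*$ with $\Lambda^2 U=0$ (Corollary \ref{cor:obs3}); a nilpotent basis in which every closed $4$-form lies in $\la e^1,e^2\ra\wedge\Lambda^3\fg^*$ (Corollary \ref{cor:lem2}); or a complement $W$ and subspace $H$ of closed $3$-forms on $\fh=\fg/\la X\ra$ on which $\lambda$ is nonnegative (Proposition \ref{prop:third}). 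I expect each exceptional case to be settled by exactly one of these three tests.

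The main obstacle is twofold. On the positive side the search for $(\omega,\psi_-,\eta)$ is not algorithmic: the three conditions are nonlinear, the $\SU(3)$-positivity $h(x)=\omega(x,Jx)>0$ must be verified, and existence is a genuine question for each algebra, so producing a solution requires experimentation (shortened here by Bagaglini's preprint). On the negative side, selecting the correct obstruction and the correct auxiliary subspaces for the four exceptions is delicate, and the parametrized family $1357N(\lambda)$ needs separate treatment because the DGA differential depends on $\lambda$; following the approach used for $147E(\lambda)$, I would implement this with the exterior-algebra worksheet that permits symbolic structure constants, exhibit purely coclosed structures for generic $\lambda$, and isolate the special value $\lambda=-2$ at which the obstruction applies.
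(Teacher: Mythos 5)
Your proposal matches the paper's proof essentially verbatim: explicit triples $(\omega,\psi_-,\eta)$ verified via Theorem \ref{thm:construction} (tabulated and checked in SageMath, with the symbolic exterior-algebra worksheet for the $\lambda$-families), plus the obstructions of Section \ref{sec:coclosed} for the four exceptions --- the paper uses Corollary \ref{cor:obs3} for $1357E$ and Proposition \ref{prop:third} for $1357N(-2)$, $1457A$, $1457B$, exactly one test per case as you anticipated. The only cosmetic difference is that for the parametrized families the paper splits into explicit parameter ranges (e.g.\ $\lambda<-2$, $-2<\lambda<0$, $\lambda=0$, $\lambda>0$ for $1357N(\lambda)$) rather than giving a single generic-$\lambda$ solution, which does not affect correctness.
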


In the Lie algebras $1357M(\lambda)$, $1357N(\lambda)$ ($\lambda\neq -2$) and $1357QRS_1(\lambda)$, $\psi_-$ depends on the
parameter $\lambda$. To check that $\psi_-\in \Lambda_-(\lie{h}^*)$, we use the commands of subsection \ref{subsec:sage},
adding an extra variable. As we are not allowed to set its degree as $0$, we set any even degree, which makes no 
difference to the computer to yield the result. This is the reason for setting its degree equal to $2$ in the worksheet.

{\footnotesize 
\begin{table}[H]
\centering
\caption{Purely coclosed $\Gtwo$-structures on indecomposable 4-step NLAs $-$ 1}\label{table:4}
\vspace{0.25 cm}
{\tabulinesep=1.2mm
\begin{tabu}{l|c|c|c}
\toprule[1.5pt]
NLA & $\omega$ & $\psi_-$ & $\eta$\\
\specialrule{1pt}{0pt}{0pt}
$1357A$ & $-e^{12}+e^{34}-e^{56}$ & $e^{135}+e^{136}-e^{145}+e^{146}+e^{235}+e^{246}$ & $e^7-2e^4$\\
\specialrule{1pt}{0pt}{0pt}
$1357B$ & $-e^{13}-e^{24}+\frac{1}{4}e^{45}-e^{56}$ & $2e^{126}+e^{145}+\frac{1}{2}e^{156}-e^{235}-\frac{1}{2}e^{346}$ & $e^7-4e^4$\\
\specialrule{1pt}{0pt}{0pt}
$1357C$ & $\begin{array}{c}
		-\frac{1}{2}e^{12}+2e^{15}-4e^{25}-6e^{35}\\[1.5pt]
		-\frac{1}{2}e^{36}+28e^{45}+e^{46}-2e^{56}
	\end{array}$
& $\begin{array}{c}
-2e^{124}-3e^{134}+4e^{145}+2e^{156}\\
-4e^{235}-2e^{246}-2e^{346}
\end{array}$ & $-2e^7+4e^5$\\
\specialrule{1pt}{0pt}{0pt}
$1357D$ & $e^{12}+e^{34}+e^{56}$ & $e^{136}+e^{145}-e^{146}-e^{235}
+e^{236}+e^{246}$ & $e^7-2e^3$\\
\specialrule{1pt}{0pt}{0pt}
$1357F$ & $\begin{array}{c}
		e^{12}+e^{13}+e^{35}\\
		-e^{46}+e^{56}
	\end{array}$ & $\begin{array}{c}
		e^{123}-e^{124}+e^{125}-2e^{126}\\
		+e^{136}+e^{145}-e^{234}+e^{236}\\
		+e^{245}-e^{246}-e^{256}
	\end{array}$ & $e^7+2e^5$\\
\specialrule{1pt}{0pt}{0pt}
$1357F_1$ & $e^{12}+e^{35}-e^{46}+e^{56}$ & $\begin{array}{c}
		-e^{134}+e^{136}+e^{145}-e^{146}\\
		-e^{234}-e^{245}-e^{246}-e^{256}
		\end{array}$ & $e^7+e^6$\\
\specialrule{1pt}{0pt}{0pt}
$1357G$ & $e^{12}-e^{34}-2e^{56}$ & $\begin{array}{c}
		e^{136}+e^{145}-e^{146}-e^{235}\\
		-e^{236}-e^{245}+\frac{9}{4}e^{246}
		\end{array}$ & $e^7+\frac{5}{2}e^3$\\
\specialrule{1pt}{0pt}{0pt}
$1357H$ & $-e^{12}+2e^{34}-e^{56}$ & $\begin{array}{c}
e^{135}+e^{145}+e^{146}\\
+e^{235}-e^{236}+2e^{245}
\end{array}$ & $e^7-\frac{1}{2}e^3$\\
\bottomrule[1pt]
\end{tabu}}
\end{table}
}

{\footnotesize 
\begin{table}[H]
\centering
\caption{Purely coclosed $\Gtwo$-structures on indecomposable 4-step NLAs $-$ 2}\label{table:4a}
\vspace{0.25 cm}
{\tabulinesep=1.2mm
\begin{tabu}{l|c|c|c}
\toprule[1.5pt]
NLA & $\omega$ & $\psi_-$ & $\eta$\\
\specialrule{1pt}{0pt}{0pt}
$1357I$ & $\begin{array}{c}
e^{15}+e^{23}+4e^{24}\\
+e^{36}+e^{56}
\end{array}$ & 
$e^{123}-e^{146}+e^{256}+e^{345}$ & $e^7+e^6+2e^5$\\
\specialrule{1pt}{0pt}{0pt}
$1357J$ & $\begin{array}{c}
-e^{13}+2e^{23}+e^{25}\\
-e^{46}+e^{56}
\end{array}$ & 
$\begin{array}{c}
2e^{123}+3e^{124}-2e^{125}+e^{136}\\
-e^{256}+e^{146}-e^{345}
\end{array}$ & $\begin{array}{c}
-\frac{2}{3}e^7+\frac{2}{3}e^6\\[1.5pt]
+\frac{4}{3}e^5+2e^3
\end{array}$\\
\specialrule{1pt}{0pt}{0pt}
$1357L$ & $e^{12}+e^{34}+3e^{56}$
& $\begin{array}{c}
2e^{135}-\frac{2}{3}e^{136}-e^{146}-e^{236}\\
-3e^{245}+e^{246}
\end{array}$ & $3e^7-3e^6-\frac{1}{2}e^3$\\
\specialrule{1pt}{0pt}{0pt}
\hspace{-0.2 cm}$\begin{array}{l}
1357M(\lambda)\\
\lambda<-1
\end{array}$ & $e^{12}-e^{34}-e^{56}$ & $\begin{array}{c}
e^{135}-e^{146}-e^{235}-\lambda e^{236}\\
-(\lambda+1)e^{245}+e^{246}
\end{array}$ & $\begin{array}{c}
e^7\\
-\frac{\lambda^3-\lambda-1}{\lambda(\lambda+1)}e^3
\end{array}$ \\
\specialrule{1pt}{0pt}{0pt}
$1357M(-1)$ & $-e^{12}-e^{34}+e^{56}$ & $e^{235}+2e^{145}-e^{136}+e^{246}$ & $e^7-\frac{3}{2}e^3$ \\
\specialrule{1pt}{0pt}{0pt}
\hspace{-0.2 cm}$\begin{array}{l}
1357M(\lambda)\\
-1<\lambda<0
\end{array}$ & $-e^{12}-e^{34}+e^{56}$ & $\begin{array}{c}
e^{135}+e^{146}+e^{235}-\lambda e^{236}\\
-(\lambda+1)e^{245}+e^{246}
\end{array}$ & $\begin{array}{c}
-e^7\\
-\frac{\lambda^3-\lambda-1}{\lambda(\lambda+1)}e^3
\end{array}$ \\
\specialrule{1pt}{0pt}{0pt}
\hspace{-0.2 cm}$\begin{array}{l}
1357M(\lambda)\\
\lambda>0
\end{array}$ & $e^{12}+e^{34}+e^{56}$ & $\begin{array}{c}
e^{135}-e^{146}-e^{235}-\lambda e^{236}\\
-(\lambda+1)e^{245}+e^{246}
\end{array}$ & $\begin{array}{c}
e^7\\
+\frac{\lambda^3-\lambda-1}{\lambda(\lambda+1)}e^3
\end{array}$ \\
\specialrule{1pt}{0pt}{0pt}
\hspace{-0.2 cm}$\begin{array}{l}
1357N(\lambda)\\
\lambda<-2
\end{array}$ & $\begin{array}{c}
\lambda e^{12}+e^{14}\\[1.5pt]
+\left(\lambda+\frac{5}{2}\right)e^{23}+e^{36}\\[1.5pt]
+(\lambda+1)e^{25}+e^{56}\end{array}$ & $\begin{array}{c}
-2(\lambda-2)(e^{126}-e^{134}-e^{146}\\
+e^{245})+4e^{135}+2e^{156}\\
-\lambda\sqrt{-(\lambda+2)}e^{246}\\
-2\lambda e^{236}+2e^{346}
\end{array}$ & $\begin{array}{c}
\sqrt{-\lambda-2}\left(-\frac{1}{\lambda+2}e^7\right.\\[1.5pt]
-\frac{3\lambda-10}{3\lambda}e^6\\[1.5pt]
\left.+\frac{7}{3}e^3\right)\end{array}$\\
\specialrule{1pt}{0pt}{0pt}
\hspace{-0.2 cm}$\begin{array}{l}
1357N(\lambda)\\
-2<\lambda<0
\end{array}$ & $\begin{array}{c}
(\lambda+2)^2e^{12}\\
+(\lambda+2)e^{34})\\
-e^{56}\end{array}$ & $\begin{array}{c}
-\frac{5}{2}e^{123}-e^{125}+e^{126}+2e^{135}\\
+e^{136}+e^{156}+e^{234}-\lambda e^{236}\\
-(\lambda+2)e^{245}+e^{346}
\end{array}$ & $\begin{array}{c}
\frac{3}{2(\lambda+2)}(e^7-e^5)\\[1.5pt]
-2\frac{3\lambda^2 + 8\lambda + 8}{\lambda(\lambda+2)}e^6\\[1.5pt]
-\frac{24\lambda^2 + 64\lambda + 61}{2(\lambda+2)}e^3\end{array}$\\
\specialrule{1pt}{0pt}{0pt}
$1357N(0)$ & $e^{14}+e^{23}+e^{56}$ & $\begin{array}{c}
e^{123}-e^{125}+2e^{126}-2 e^{135}\\
+e^{136}-e^{156}+2e^{245}+e^{346}
\end{array}$ & $\begin{array}{c}
e^7-\frac{16}{23}e^6\\[1.5pt]
+e^5-2e^3
\end{array}$\\
\specialrule{1pt}{0pt}{0pt}
\hspace{-0.2 cm}$\begin{array}{l}
1357N(\lambda)\\
\lambda>0
\end{array}$ & $\begin{array}{c}
2\lambda e^{12}+(\lambda+2)e^{14}\\
+\lambda e^{34}+e^{56}\end{array}$ & $\begin{array}{c}
-2\lambda(e^{126}-e^{134}-e^{146}-e^{236})\\
-4e^{135}-2e^{156}+2(\lambda+2)e^{245}\\
-e^{246}-2e^{346}
\end{array}$ & $\begin{array}{c}
\frac{4\lambda^4+15\lambda^3 + 32\lambda^2 + 68\lambda + 64}{\lambda(4\lambda^2 + 15\lambda + 16)}e^6\\[1.5pt]
+\frac{4\lambda^3 + 31\lambda^2 + 84\lambda + 64}{4\lambda^2 + 15\lambda + 16}e^3\\[1.5pt]
-e^7
\end{array} $\\
\specialrule{1pt}{0pt}{0pt}
$1357O$ & $e^{12}-e^{34}-e^{56}$
& $\begin{array}{c}
e^{136}+\frac{5}{4}e^{145}-\frac{1}{2}e^{146}\\
-e^{235}-\frac{1}{2}e^{245}+e^{246}
\end{array}$ & $e^7-\frac{13}{4}e^3$\\
\specialrule{1pt}{0pt}{0pt}
$1357P$ & $e^{12}+e^{34}+e^{56}$
& $\begin{array}{c}
-e^{135}+e^{146}-e^{235}\\
+e^{236}+2e^{245}+e^{246}
\end{array}$ & $e^7-2e^3$\\
\bottomrule[1pt]
\end{tabu}}
\end{table}
}

{\footnotesize 
\begin{table}[H]
\centering
\caption{Purely coclosed $\Gtwo$-structures on indecomposable 4-step NLAs $-$ 3}\label{table:5}
\vspace{0.25 cm}
{\tabulinesep=1.2mm
\begin{tabu}{l|c|c|c}
\toprule[1.5pt]
NLA & $\omega$ & $\psi_-$ & $\eta$\\
\specialrule{1pt}{0pt}{0pt}
$1357P_1$ & $e^{12}+2e^{34}+2e^{56}$
& $e^{145}+e^{136}+e^{235}-e^{246}$ & $-2e^7-e^3$\\
\specialrule{1pt}{0pt}{0pt}
$1357Q$ & $e^{12}+e^{34}+e^{56}$ & $\begin{array}{c}
-e^{135}+e^{146}-e^{235}\\
+e^{236}+e^{245}+e^{246}
\end{array}$ & $e^7-2e^3$\\
\specialrule{1pt}{0pt}{0pt}
$1357Q_1$ & $-e^{12}-e^{34}+2e^{56}$ & $\begin{array}{c}
-e^{135}+\frac{17}{4}e^{145}-e^{146}+e^{235}\\
-e^{236}-e^{245}+\frac{9}{4}e^{246}
\end{array}$ & $e^7+\frac{153}{8}e^3$\\
\specialrule{1pt}{0pt}{0pt}
\hspace{-0.2 cm}$\begin{array}{l}
1357QRS_1(\lambda)\\
\lambda>0
\end{array}$ & $e^{12}+\lambda e^{34}+\lambda e^{56}$ & $
\begin{array}{c}
-e^{135}+2e^{145}+\lambda e^{146}+e^{235}\\
+\lambda e^{236}+e^{245}-\lambda e^{246}
\end{array}$ & $\lambda e^7+\frac{1}{2}(3\lambda+1)e^3$ \\
\specialrule{1pt}{0pt}{0pt}
\hspace{-0.2 cm}$\begin{array}{l}
1357QRS_1(\lambda)\\
\lambda<0
\end{array}$ & $-e^{12}+\lambda e^{34}-\lambda e^{56}$ & $
\begin{array}{c}
-e^{135}+2e^{145}+\lambda e^{146}+e^{235}\\
+\lambda e^{236}+e^{245}-\lambda e^{246}
\end{array}$ & $-\lambda e^7+\frac{1}{2}(3\lambda+1)e^3$ \\
\specialrule{1pt}{0pt}{0pt}
$1357R$ & $-e^{12}-e^{34}+e^{56}$ & $\begin{array}{c}
-e^{135}-e^{136}+e^{145}\\
+e^{235}+e^{245}+e^{246}
\end{array}$ & $e^7+e^3$\\
\specialrule{1pt}{0pt}{0pt}
$1357S(0)$ & $\begin{array}{c}
-e^{12}-3e^{34}+e^{56}\\
-e^{35}+2e^{46}
\end{array}$ & $-e^{145}+e^{136}-e^{245}-e^{235}-2e^{246}$ & $e^7-\frac{19}{7}e^3$ \\
\specialrule{1pt}{0pt}{0pt}
\hspace{-0.2 cm}$\begin{array}{l}
1357S(\lambda)\\
\lambda\neq 0
\end{array}$ & $-e^{12}-e^{34}+e^{56}$ & $e^{136}-e^{145}-e^{235}-e^{245}-e^{246}$ & $\frac{1}{\lambda}e^7-\left(\frac{1}{\lambda}+\frac{4}{3}\right)e^3$ \\
\specialrule{1pt}{0pt}{0pt}
$2357A$ & $\begin{array}{c}
e^{13}-e^{24}+e^{57}\\
+2e^{14}-2e^{23}
\end{array}$ & $-e^{125}-e^{237}-e^{147}-e^{345}$ & $\frac{9}{2}e^6-\frac{9}{4}e^5$\\
\specialrule{1pt}{0pt}{0pt}
$2357B$ & $e^{13}-e^{24}+e^{57}$ & $-e^{125}-e^{147}-e^{237}-e^{345}$ & $-3e^6$\\
\specialrule{1pt}{0pt}{0pt}
$2357C$ & $e^{13}-e^{24}+e^{57}$ & $-e^{125}-e^{147}-e^{237}-e^{345}$ & $3e^6$\\
\specialrule{1pt}{0pt}{0pt}
$2357D$ & $\begin{array}{c}
3e^{13}-e^{24}+e^{57}+e^{12}\\
-e^{34}+e^{14}-e^{23}
\end{array}$ & 
$-e^{125}-e^{237}-e^{147}-e^{345}$ & $-e^7+3e^6-2e^4$\\
\specialrule{1pt}{0pt}{0pt}
$2357D_1$ & $\begin{array}{c}
e^{13}+e^{24}-e^{57}\\
+\frac{1}{2}(e^{15}-e^{37})
\end{array}$ & 
$e^{125}+e^{147}-e^{237}-e^{345}$ & $3e^6-\frac{1}{2}e^4$\\
\specialrule{1pt}{0pt}{0pt}
$2457A$ & $e^{15}+e^{24}+e^{36}$
& $e^{123}-e^{146}+e^{256}+e^{345}$ & $e^7$\\
\specialrule{1pt}{0pt}{0pt}
$2457B$ & $\begin{array}{c}
e^{15}-e^{24}-e^{36}\\
+e^{13}-e^{56}
\end{array}$ & $e^{123}-e^{146}-e^{256}-e^{345}$ & $e^7-e^6+e^4$\\
\specialrule{1pt}{0pt}{0pt}
$2457C$ & $e^{15}-e^{24}+e^{36}$ & $-e^{123}+e^{146}+e^{256}+2e^{345}$ & $e^7$\\
\specialrule{1pt}{0pt}{0pt}
$2457D$ & $\begin{array}{c}
2e^{15}-e^{23}+2e^{26}\\
+e^{34}+e^{36}+3e^{46}
\end{array}$
& $\begin{array}{c}
e^{124}+e^{126}+e^{136}\\
+e^{245}+e^{256}+e^{345}
\end{array}$ & $10e^7+6e^3$\\
\specialrule{1pt}{0pt}{0pt}
$2457E$ & $e^{15}-e^{23}+e^{47}$
& $e^{124}+e^{137}+e^{257}+e^{345}$ & $-e^6+e^4$\\
\specialrule{1pt}{0pt}{0pt}
$2457F$ & $-e^{14}+e^{23}-2e^{46}-e^{56}$
& $e^{124}+e^{125}+e^{136}-e^{256}-e^{345}$ & $-e^7$\\
\specialrule{1pt}{0pt}{0pt}
$2457G$ & $2e^{15}-e^{23}+e^{47}$
& $e^{124}+e^{137}+e^{257}+e^{345}$ & $-2e^6$\\
\bottomrule[1pt]
\end{tabu}}
\end{table}
}

{\footnotesize 
\begin{table}[H]
\centering
\caption{Purely coclosed $\Gtwo$-structures on indecomposable 4-step NLAs $-$ 4}\label{table:5a}
\vspace{0.25 cm}
{\tabulinesep=1.2mm
\begin{tabu}{l|c|c|c}
\toprule[1.5pt]
NLA & $\omega$ & $\psi_-$ & $\eta$\\

\specialrule{1pt}{0pt}{0pt}
$2457H$ & $e^{12}-e^{35}+e^{46}$
& $e^{136}+e^{145}+e^{234}+e^{256}$ & $-e^7+e^4$\\
\specialrule{1pt}{0pt}{0pt}
$2457I$ & $\begin{array}{c}
e^{15}+e^{23}-e^{46}-e^{56}\\
+\frac{1}{2}(e^{13}+e^{25})
\end{array} $
& $e^{124}+e^{125}+e^{136}-e^{256}-e^{345}$ & $\frac{1}{2}e^7+e^4$\\
\specialrule{1pt}{0pt}{0pt}
$2457J$ & $e^{15}+e^{23}-e^{46}-e^{56}$
& $e^{124}+e^{125}+e^{136}-e^{256}-e^{345}$ & $-e^7-e^4$\\
\specialrule{1pt}{0pt}{0pt}
$2457K$ & $-e^{12}+e^{35}-e^{46}$ & $
e^{136}+e^{145}+e^{234}+e^{256}$ & $-e^7+e^4+e^3$\\
\specialrule{1pt}{0pt}{0pt}
$2457L$ & $e^{15}+e^{24}+e^{36}$ & $e^{123}+e^{134}-e^{146}+e^{235}+3e^{256}+2e^{345}$ & $18e^7$\\
\specialrule{1pt}{0pt}{0pt}
$2457L_1$ & $-e^{14}+2e^{25}+e^{37}$ & $-e^{123}+e^{135}+e^{157}+e^{234}+e^{247}-2e^{345}$ & $\frac{2}{3}e^6$\\
\specialrule{1pt}{0pt}{0pt}
$2457M$ & $2e^{15}+e^{24}+e^{37}$ & $e^{123}-e^{147}+e^{257}+e^{345}$ & $-\frac{2}{3}e^6$\\
\bottomrule[1pt]
\end{tabu}}
\end{table}
}

Summarizing, the proofs of Theorems \ref{2step-indecomposable-pure}, \ref{3step-indecomposable-pure} 
and \ref{4step-indecomposable-pure} have two parts:
\begin{itemize}
\item Using Theorem \ref{thm:construction}, we exhibit an explicit purely coclosed $\Gtwo$-structure on each indecomposable NLA which is not mentioned in the statements of the theorems. These are given in Tables \ref{table:2}--\ref{table:5a}. 
For the convenience of the reader, we provide SageMath worksheets \cite{worksheets} which can be used as in Subsection \ref{subsec:sage}.
\item We use the obstructions of Section \ref{sec:coclosed} to prove that the Lie algebras in the statement of 
Theorems \ref{2step-indecomposable-pure}--\ref{4step-indecomposable-pure} do not admit any coclosed $\Gtwo$-structure.
\end{itemize}

	As a consequence of the results of this section, we get the following result:

\begin{corollary}
Every $7$-dimensional indecomposable nilpotent Lie algebra of nilpotency step $\leq 4$ admitting a coclosed $\Gtwo$-structure also admits a purely coclosed one.
\end{corollary}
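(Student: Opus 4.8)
The plan is to derive the statement as a direct logical consequence of Theorems \ref{2step-indecomposable-pure}, \ref{3step-indecomposable-pure}, and \ref{4step-indecomposable-pure}, combined with the key observation that the machinery of Section \ref{sec:coclosed} obstructs \emph{coclosed} $\Gtwo$-structures, not merely purely coclosed ones. Concretely, I would split the indecomposable nilpotent Lie algebras of step $\leq 4$ into those appearing in the exceptional lists of the three theorems and those that do not, and handle each group separately.

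For every algebra not in an exceptional list, the corresponding theorem already furnishes, via Theorem \ref{thm:construction} and the explicit tensors $\omega$, $\psi_-$, $\eta$ recorded in Tables \ref{table:1}--\ref{table:5a}, a purely coclosed $\Gtwo$-structure. Since a purely coclosed structure is by definition coclosed, such an algebra satisfies the asserted implication trivially, and no further work is needed here.

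The essential point is the treatment of the exceptional algebras: $27A$ and $27B$ in the $2$-step case; the seventeen algebras listed in Theorem \ref{3step-indecomposable-pure} in the $3$-step case; and $1357E$, $1457A$, $1457B$, $1357N(-2)$ in the $4$-step case. I would invoke the fact that each of these was excluded precisely because one of the obstructions of Section \ref{sec:coclosed} applies to it. Crucially, Corollary \ref{cor:obs3}, Corollary \ref{cor:lem2}, and Proposition \ref{prop:third} all conclude the nonexistence of a coclosed $\Gtwo$-structure, rather than of a merely purely coclosed one. Hence each exceptional algebra admits no coclosed $\Gtwo$-structure whatsoever, and the implication ``admits coclosed $\Rightarrow$ admits purely coclosed'' holds vacuously.

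Combining the two cases, every indecomposable NLA of step $\leq 4$ either carries a purely coclosed $\Gtwo$-structure or carries no coclosed one at all; the intermediate class of algebras admitting a coclosed structure but no purely coclosed structure is empty, which is exactly the claim. The only step requiring genuine (if modest) care, and the one I expect to be the main obstacle, is confirming that each exceptional algebra was in fact dispatched by an obstruction forbidding \emph{coclosed} structures, and not by a finer argument tailored to pure coclosedness; this is guaranteed by the architecture of the proofs of the three theorems, whose negative halves rely entirely on Section \ref{sec:coclosed}.
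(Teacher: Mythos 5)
Your proposal is correct and follows essentially the same route as the paper: the non-exceptional algebras carry the explicit purely coclosed structures of Tables \ref{table:1}--\ref{table:5a} via Theorem \ref{thm:construction}, while the exceptional algebras of Theorems \ref{2step-indecomposable-pure}, \ref{3step-indecomposable-pure} and \ref{4step-indecomposable-pure} are excluded in Section \ref{section:nococlosed} by obstructions (Corollary \ref{cor:obs3}, Corollary \ref{cor:lem2}, Proposition \ref{prop:third}) that forbid \emph{any} coclosed $\Gtwo$-structure, so the implication holds vacuously for them. Your one point of ``genuine care''---that the negative halves rely only on obstructions to coclosedness---is exactly how the paper organizes its argument, so nothing further is needed.
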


We expect that {\em all} indecomposable 7-dimensional nilpotent Lie algebras carrying a coclosed $\Gtwo$-structure also admit a purely coclosed one. Indeed, besides the {\em ad hoc} argument which allows to rule out $\fn_2$ from the list of Lie algebras admitting a purely coclosed $\Gtwo$-structure (see \cite[Corollary 4.3]{BMR}), no general obstruction to the existence of a purely coclosed  $\Gtwo$-structure is known. However, it is difficult to deduce the existence of a purely coclosed $\Gtwo$-structure directly from the existence of a coclosed one, under the indecomposability assumption.

A final remark concerns the existence of {\em exact} purely coclosed $\Gtwo$-structures: these are purely coclosed $\Gtwo$-structures whose 4-form $\phi=\ast_\varphi\varphi$ is exact. Notice that there are examples of exact coclosed $\Gtwo$-structures: it suffices to consider a nearly parallel $\Gtwo$-structure, for which $d\varphi=\ast_\varphi\varphi=\phi$. All the purely coclosed $\Gtwo$-structures we constructed in this paper turn out not to be exact. The same question has been asked in the context of closed $\Gtwo$-structures.  A $\Gtwo$-structure is {\em exact} if its defining 3-form $\varphi$ is exact. It was recently proved that no compact quotient of a Lie group by a discrete subgroup admits an exact $\Gtwo$-structure which is left-invariant (see \cite[Theorem 1.1]{FMMR}).

\begin{conjecture}
No compact quotient of a Lie group by a discrete subgroup admits an exact purely coclosed $\Gtwo$-structure.
\end{conjecture}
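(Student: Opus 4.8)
The plan is to follow the strategy of \cite{FMMR}, adapting it from the exact closed case to the purely coclosed case, and to work first with left-invariant structures, where the problem becomes algebraic. So let $M=\G\backslash G$ with $\fg$ the (necessarily unimodular) Lie algebra of $G$, and suppose $\varphi$ is a left-invariant purely coclosed $\Gtwo$-structure with $\phi=*_\varphi\varphi$ exact. Unimodularity gives that $d\colon\Lambda^6\fg^*\to\Lambda^7\fg^*$ is zero, so Stokes' theorem holds on the Chevalley--Eilenberg complex: $\int_M d\gamma=0$ for every left-invariant $6$-form $\gamma$. First I would reduce the exactness hypothesis to the invariant level: since $\phi$ is a closed invariant $4$-form, Nomizu's theorem (for nilmanifolds; the completely solvable Hattori theorem more generally) identifies de Rham with Lie algebra cohomology, so $[\phi]=0$ in $H^4(\Lambda(\fg^*))$ and $\phi=d\beta$ with $\beta\in\Lambda^3\fg^*$ left-invariant.

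The key reformulation I would establish is that, for a coclosed structure, exactness of $\phi$ is equivalent to the vanishing of the harmonic part of $\varphi$. Using $\phi\wedge z=\langle\varphi,z\rangle\,\vol$ for any $3$-form $z$, together with the perfect Poincar\'e pairing $H^3(\Lambda(\fg^*))\times H^4(\Lambda(\fg^*))\to H^7(\Lambda(\fg^*))=\RR$, one sees that $[\phi]=0$ if and only if $\int_M\langle\varphi,z\rangle\,\vol=0$ for every closed invariant $3$-form $z$; equivalently, with respect to the Laplacian of $g_\varphi$ on invariant forms, $\varphi$ has no harmonic component and is coexact. Thus the conjecture (in the left-invariant case) reduces to the clean statement: a purely coclosed $\Gtwo$-structure can never be coexact, i.e.\ $[*_\varphi\varphi]\neq0\in H^4(\Lambda(\fg^*))$.

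To try to produce the contradiction I would combine the volume normalization with the purely coclosed torsion equation $d\varphi=*_\varphi\tau_3$. Integrating by parts, $7\,\vol(M)=\int_M\varphi\wedge\phi=\int_M\varphi\wedge d\beta=\int_M d\varphi\wedge\beta=\int_M\langle\tau_3,\beta\rangle\,\vol$, so an exact purely coclosed structure would force the intrinsic torsion $\tau_3$ to pair nontrivially against a primitive $\beta$ of $\phi$. The main obstacle is precisely here, and it is what distinguishes the present case from \cite{FMMR}: in the closed case $\varphi$ is itself a closed test form and pairs with $\phi$ to recover the positive volume directly, whereas for a purely coclosed structure $\varphi$ is not closed and the identity above turns out to be self-consistent (choosing $\beta$ coexact reduces it to the tautology $\|d\beta\|^2=\|\phi\|^2$). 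A genuine uniform contradiction therefore seems to require extra input: either a representation-theoretic $\Gtwo$-identity relating $\|\tau_3\|^2$, the scalar curvature of $g_\varphi$ and $\vol(M)$ that is incompatible with $\varphi$ being coexact, or --- mirroring the obstruction arguments of Section \ref{sec:coclosed} --- an exhaustive verification across Gong's list that the closed (harmonic) projection of $\varphi$ is nonzero, which one could implement in the SageMath worksheets by checking $[*_\varphi\varphi]\neq0$ for each candidate. Finally, removing the left-invariance assumption, as literally stated in the conjecture, is harder still, since averaging a non-invariant $\Gtwo$-form need not preserve positivity nor the torsion type; handling that case is, I expect, the deepest part of the problem.
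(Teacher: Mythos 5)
The statement you are asked about is not a theorem of the paper: it is stated, and left, as an open \emph{conjecture}. The paper offers no proof, only circumstantial evidence (none of the explicit purely coclosed structures in its tables is exact, and the analogous statement for exact \emph{closed} $\Gtwo$-structures was proved in \cite{FMMR}). Your proposal is therefore being measured against nothing, and, to your credit, it does not claim to close the question; but as a proof it has a genuine, decisive gap exactly where you locate it. Your reductions are essentially sound on the invariant level: for a left-invariant coclosed structure on a nilmanifold, Nomizu lets you take an invariant primitive $\beta$ with $\ast_\varphi\varphi=d\beta$, Poincar\'e duality on the Chevalley--Eilenberg complex translates exactness of $\ast_\varphi\varphi$ into coexactness of $\varphi$, and the integration by parts $7\,\vol(M)=\int_M\varphi\wedge d\beta=\int_M\langle\tau_3,\beta\rangle\,\vol_\varphi$ is correct. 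But, as you yourself observe, this identity is self-consistent: it only shows $\tau_3\neq 0$, i.e.\ that an exact purely coclosed structure cannot be torsion-free, which is no contradiction. The mechanism that makes \cite{FMMR} work in the closed case --- $\varphi$ itself is a closed test form pairing with $\phi$ to produce the volume --- has no analogue here, and you supply no replacement. The ``extra input'' you gesture at (a $\Gtwo$-identity relating $\|\tau_3\|^2$, scalar curvature and volume) is not produced, so the core of the conjecture remains untouched.

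Two further points would need repair even as a program. First, the conjecture concerns arbitrary compact quotients of Lie groups, whereas Nomizu (or Hattori) applies only to nilmanifolds (or completely solvable solvmanifolds); to reduce exactness to the invariant complex in general you would instead need the symmetrization argument (averaging a de Rham primitive over the compact quotient with respect to a bi-invariant volume, valid since $G$ must be unimodular), which you do not invoke. Second, the proposed fallback of an ``exhaustive verification across Gong's list'' cannot prove the conjecture: checking $[\ast_\varphi\varphi]\neq 0$ for the finitely many explicit structures in the tables says nothing about the other purely coclosed structures on the same algebra (these form positive-dimensional families, so the check is not finite), and Gong's list \cite{Gong} covers only nilpotent algebras in dimension $7$, not general Lie groups. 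In short: your partial reductions are correct and match the natural first steps, but the statement is open in the paper and remains open after your attempt; the missing idea is a genuine obstruction to $\varphi$ being coexact, and nothing in the proposal supplies it.
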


\section{7D nilpotent Lie algebras with no coclosed $\Gtwo$-structures}\label{section:nococlosed}

We prove that the following algebras do not have coclosed $G_2$-structures:
\begin{align*}
& 27A, 27B \\
& 247E, 247E_1, 247G, 247H, 247H_1, 247K, 247R, 247R_1\\
& 257B, 257D, 257E, 257G, 257H, 257K, 257L\\
& 357B, 357C\\
& 1357E, 1357N(-2) \\
& 1457A, 1457B\,.
\end{align*}

In order to prove this, we use three different types of obstructions. We list in order:

%
%

\begin{itemize}
\item We use Corollary \ref{cor:obs3} to cover the families $27$, $247$, $257$, as well as $1357E$. 
For instance, for $27A$, we have the following worksheet

\smallskip

{\tiny
\noindent \hspace{-2mm} \begin{verb}
A.<x1,x2,x3,x4,x5,x6,x7> = GradedCommutativeAlgebra(QQ)
\end{verb}

\noindent \hspace{-2mm} \begin{verb}
M=A.cdg_algebra({x6:x1*x2,x7: x1*x5 +x2*x3})
\end{verb}

\noindent \hspace{-2mm} \begin{verb}
M.inject_variables()
\end{verb}

\noindent \hspace{-2mm} \begin{verb}
M.cohomology(4)
\end{verb}

\smallskip

\noindent \hspace{-2mm} \begin{verb}
Defining x1, x2, x3, x4, x5, x6, x7
\end{verb}

\noindent \hspace{-2mm} \begin{verb}
Free module generated by {[x1*x2*x3*x6],[x1*x2*x4*x6],[x1*x3*x4*x6],[x2*x3*x4*x6],[x1*x2*x5*x6], 
\end{verb}

\noindent \hspace{-2mm} \begin{verb}
[x1*x4*x5*x6],[x2*x4*x5*x6],[x3*x4*x5*x6-x2*x3*x5*x7],[x1*x3*x4*x7],[x2*x3*x4*x7],[x1*x3*x5*x7], 
\end{verb}

\noindent \hspace{-2mm} \begin{verb}
[x1*x4*x5*x7],[x2*x4*x5*x7],[x3*x4*x5*x7],[x1*x3*x6*x7],[x1*x5*x6*x7]} over Rational Field
\end{verb}
}
\smallskip

\noindent Recall that in the worksheet we write $xi$ for $e^i$. 
We use $X=e_6$, $Y=e_7$, and the space $U=\la e^{13} \ra$. Using
the list of representatives of the cohomology classes, we see that $\imath_{e_6}\imath_{e_7} z_\alpha \in U$. Also $\Lambda^2 U=0$, as required.


\item We use Corollary \ref{cor:lem2} for the algebras 357B and 357C. In the Sage worksheet, we find the closed $4$-forms by 
computing the degree $4$ cohomology and the exact $4$-forms, explicitly. 


\item We use Proposition \ref{prop:third} for the algebras $1357N(-2)$, $1457A$, and $1457B$.
The Sage worksheets are self-commented and constructive. Bases of the spaces of closed $2$-forms $\beta$ and closed $3$-forms $\tau$
are found. Then the conditions $\beta\wedge d\beta,\tau\wedge de^j \in W$, $1\leq j\leq 6$, are checked.
The computation of $\lambda (\sum a_\alpha z_\alpha)$ is done using parameters and the implementation of the 
condition $\sum a_\alpha z_\alpha \wedge de^7\in W$ gives linear equations among $(a_\alpha)$ that reduces the expression of
$\lambda (\sum a_\alpha z_\alpha)$ to a square in each case.

\end{itemize}

\appendix

\section{7D nilpotent Lie alegbras of nilpotency step $\leq 4$}\label{appendix}

In this appendix we list 7-dimensional nilpotent Lie algebras of nilpotency step $\leq 4$ using differentials. The Chevalley-Eilenberg differential $d\colon\fg^*\to\Lambda^2\fg^*$ is dual to the bracket $[\cdot,\cdot]\colon \Lambda^2\fg\to\fg$. The list of Gong \cite{Gong} is given in terms of a nilpotent frame $\{e_1,\ldots,e_7\}$ of $\fg$; we give the structure constants in terms of the nilpotent coframe $\{-e^1,\ldots,-e^7\}$ of $\fg^*$. 

\begin{itemize}
\item In the family $147E(\lambda)$, two Lie algebras $147E(\lambda_1)$ and $147E(\lambda_2)$ are isomorphic if and only if $\frac{\left(1-\lambda_1+\lambda_1^2\right)^3}{\lambda_1^2(\lambda_1-1)^2}=\frac{\left(1-\lambda_2+\lambda_2^2\right)^3}{\lambda_2^2(\lambda_2-1)^2}$;
\item in the family $1357QRS_1(\lambda)$, two Lie algebras $1357QRS_1(\lambda_1)$ and $1357QRS_1(\lambda_2)$ are isomorphic if and only if $\lambda_1+\lambda_1^{-1}=\lambda_2+\lambda_2^{-1}$, that is, if and only if $\lambda_2=\lambda_1$ or $\lambda_2=\frac{1}{\lambda_1}$.
\end{itemize}


{\small 
\begin{table}[H]
\centering
\caption{7-dimensional decomposable nilpotent Lie algebras  $-$ 1}\label{table:7Ddec}
\vspace{0.25 cm}
{\tabulinesep=1.2mm
\begin{tabu}{c|c||c|c}
\toprule[1.5pt]
NLA & structure equations & NLA & structure equations \\
\specialrule{1pt}{0pt}{0pt}
$\fn_1$ & $(0,0,0,0,0,0,0)$ & $\fn_2$ & $(0,0,0,0,0,12,0)$\\
\specialrule{1pt}{0pt}{0pt}
$\fn_3$ & $(0,0,0,0,0,12+34,0)$ & $\fn_4$ & $(0,0,0,0,12,13,0)$ \\
\specialrule{1pt}{0pt}{0pt}
$\fn_5$ & $(0,0,0,0,12,34,0)$ & $\fn_6$ & $(0,0,0,0,13-24,14+23,0)$\\
\specialrule{1pt}{0pt}{0pt}
$\fn_7$ & $(0,0,0,0,12,14+23,0)$ & $\fn_8$ & $(0,0,0,0,12,14+25,0)$ \\
\specialrule{1pt}{0pt}{0pt}
$\fn_9$ & $(0,0,0,0,12,15+34,0)$ & $\fn_{10}$ & $(0,0,0,12,13,23,0)$\\
\specialrule{1pt}{0pt}{0pt}
$\fn_{11}$ & $(0,0,0,12,13,24,0)$ & $\fn_{12}$ & $(0,0,0,12,13,14+23,0)$ \\
\specialrule{1pt}{0pt}{0pt}
$\fn_{13}$ & $(0,0,0,12,23,14+35,0)$ & $\fn_{14}$ & $(0,0,0,12,23,14-35,0)$\\
\specialrule{1pt}{0pt}{0pt}
$\fn_{15}$ & $(0,0,0,12,13,14+35,0)$ & $\fn_{16}$ & $(0,0,0,12,14,15,0)$\\
\specialrule{1pt}{0pt}{0pt}
$\fn_{17}$ & $(0,0,0,12,14,15+24,0)$ & $\fn_{18}$ & $(0,0,0,12,14,15+24+23,0)$\\
\specialrule{1pt}{0pt}{0pt}
$\fn_{19}$ & $(0,0,0,12,14,15+23,0)$ & $\fn_{20}$ & $(0,0,0,12,14-23,15+34,0)$\\
\specialrule{1pt}{0pt}{0pt}
$\fn_{21}$ & $(0,0,12,13,23,14+25,0)$ & $\fn_{22}$ & $(0,0,12,13,23,14-25,0)$\\
\specialrule{1pt}{0pt}{0pt}
$\fn_{23}$ & $(0^2,12,13,23,14,0)$ & $\fn_{24}$ & $(0^2,12,13,14+23,15+24,0)$\\
\specialrule{1pt}{0pt}{0pt}
$\fg_1$ & $(0,0,0,0,12,15,0)$ & $\fg_2$ & $(0,0,0,0,23,34,36)$\\
\specialrule{1pt}{0pt}{0pt}
$\fg_3$ & $(0,0,0,12,13,14,0)$ & $\fg_4$ & $(0,0,0,12,14,24,0)$\\
\specialrule{1pt}{0pt}{0pt}
$\fg_5$ & $(0,0,12,13,14,23+15,0)$ & $\fg_6$ & $(0,0,12,13,14,15,0)$\\
\specialrule{1pt}{0pt}{0pt}
$\fg_7$ & $(0,0,12,13,14,34-25,0)$ & $\fg_8$ & $(0,0,12,13,14+23,34-25,0)$\\
\specialrule{1pt}{0pt}{0pt}
$\lie{l}_1$ & $(0,0,0,12,13-24,14+23,0)$ & $\lie{l}_2$ & $(0,0,0,12,14,13-24,0)$\\
\specialrule{1pt}{0pt}{0pt}
$\lie{l}_3$ & $(0,0,0,12,13+14,24,0)$ & & \\
\bottomrule[1pt]
\end{tabu}}
\end{table}
}



{\footnotesize 
\begin{table}[H]
\centering
\caption{7-dimensional indecomposable 2-step nilpotent Lie algebras}\label{7d-ind-2step}
\vspace{0.25 cm}
{\tabulinesep=1.2mm
\begin{tabu}{c|c|c||c|c|c}
\toprule[1.5pt]
NLA & structure equations & center & NLA & structure equations & center\\
\specialrule{1pt}{0pt}{0pt}
$17$ & $\left(0^6,12+34+56\right)$ & $\langle e_7\rangle$ & $27A$ & $\left(0^5,12,14+35\right)$ & $\langle e_6,e_7\rangle$\\
\specialrule{1pt}{0pt}{0pt}
$27B$ & $\left(0^5,12+34,15+23\right)$ & $\langle e_6,e_7\rangle$ & $37A$ & $\left(0^4,12, 23,24\right)$ & $\langle e_5,e_6,e_7\rangle$\\
\specialrule{1pt}{0pt}{0pt}
$37B$ & $\left(0^4,12,23,34\right)$ & $\langle e_5,e_6,e_7\rangle$ & $37B_1$ & $\left(0^4,12-34,13+24,14\right)$ & $\langle e_5,e_6,e_7\rangle$\\
\specialrule{1pt}{0pt}{0pt}
$37C$ & $\left(0^4,12+34,23,24\right)$ & $\langle e_5,e_6,e_7\rangle$ & $37D$ & $\left(0^4,12+34,13,24\right)$ & $\langle e_5,e_6,e_7\rangle$ \\
\specialrule{1pt}{0pt}{0pt}
$37D_1$ & $\left(0^4,12-34,13+24,14-23\right)$ & $\langle e_5,e_6,e_7\rangle$\\
\bottomrule[1pt]
\end{tabu}}
\end{table}
}


{\footnotesize 
\begin{table}[H]
\centering
\caption{7-dimensional indecomposable 3-step nilpotent Lie algebras $-$ 1}\label{7d-ind-3step}
\vspace{0.25 cm}
{\tabulinesep=1.2mm
\begin{tabu}{l|c|c}
\toprule[1.5pt]
NLA & structure equations & center\\
\specialrule{1pt}{0pt}{0pt}
$137A$ & $\left(0^4,12,34,15+36\right)$ & $\langle e_7\rangle$\\
\specialrule{1pt}{0pt}{0pt}
$137A_1$ & $\left(0^4,13+24,14-23,15+26\right)$ & $\langle e_7\rangle$
\\
\specialrule{1pt}{0pt}{0pt}
$137B$ & $\left(0^4,12,34,15+24+36\right)$ & $\langle e_7\rangle$\\
\specialrule{1pt}{0pt}{0pt}
$137B_1$ & $\left(0^4,13+24,14-23,15+26+34\right)$ & $\langle e_7\rangle$ \\
\specialrule{1pt}{0pt}{0pt}
$137C$ & $\left(0^4,12,14+23,16-35\right)$ & $\langle e_7\rangle$\\
\specialrule{1pt}{0pt}{0pt}
$137D$ & $\left(0^4,12,14+23,16+24-35\right)$ & $\langle e_7\rangle$\\
\specialrule{1pt}{0pt}{0pt}
$147A$ & $\left(0^3,12,13,0,16+25+34\right)$ & $\langle e_7\rangle$\\
\specialrule{1pt}{0pt}{0pt}
$147A_1$ & $\left(0^3,12,13,0,16+24+35\right)$ & $\langle e_7\rangle$\\
\specialrule{1pt}{0pt}{0pt}
$147B$ & $\left(0^3,12,13,0,14+26+35\right)$ & $\langle e_7\rangle$\\
\specialrule{1pt}{0pt}{0pt}
$147D$ & $\left(0^3,12,23,-13,15+16+26+2\cdot 34\right)$ & $\langle e_7\rangle$\\
\specialrule{1pt}{0pt}{0pt}
$147E(\lambda)$ & $\left(0^3,12,23,-13,-15+\lambda\cdot 26+(1-\lambda)\cdot 34\right)$, $\lambda\neq0,1$ & $\langle e_7\rangle$\\
\specialrule{1pt}{0pt}{0pt}
$147E_1(\lambda)$ & $\left(0^3,12,23,-13,-\lambda\cdot 16+\lambda\cdot 25+2\cdot 26-2\cdot 34\right)$, $\lambda>1$ & $\langle e_7\rangle$\\
\specialrule{1pt}{0pt}{0pt}
$157$ & $\left(0^2,12,0,0,0,13+24+56\right)$ & $\langle e_7\rangle$\\
\specialrule{1pt}{0pt}{0pt}
$247A$ & $\left(0^3,12,13,14,15\right)$ & $\langle e_6,e_7\rangle$\\
\specialrule{1pt}{0pt}{0pt}
$247B$ & $\left(0^3,12,13,14,35\right)$ & $\langle e_6,e_7\rangle$\\
\specialrule{1pt}{0pt}{0pt}
$247C$ & $\left(0^3,12,13,14+35,15\right)$ & $\langle e_6,e_7\rangle$\\
\specialrule{1pt}{0pt}{0pt}
$247D$ & $\left(0^3,12,13,14,25+34\right)$ & $\langle e_6,e_7\rangle$ \\
\specialrule{1pt}{0pt}{0pt}
$247E$ & $\left(0^3,12,13,14+15,25+34\right)$ & $\langle e_6,e_7\rangle$ \\
\specialrule{1pt}{0pt}{0pt}
$247E_1$ & $\left(0^3,12,13,14,24+35\right)$ & $\langle e_6,e_7\rangle$ \\
\specialrule{1pt}{0pt}{0pt}
$247F$ & $\left(0^3,12,13,24+35,25+34\right)$ & $\langle e_6,e_7\rangle$ \\
\specialrule{1pt}{0pt}{0pt}
$247F_1$ & $\left(0^3,12,13,24-35,25+34\right)$ & $\langle e_6,e_7\rangle$ \\
\specialrule{1pt}{0pt}{0pt}
$247G$ & $\left(0^3,12,13,14+15+24+35,25+34\right)$ & $\langle e_6,e_7\rangle$ \\
\specialrule{1pt}{0pt}{0pt}
$247H$ & $\left(0^3,12,13,14+24+35,25+34\right)$ & $\langle e_6,e_7\rangle$ \\
\specialrule{1pt}{0pt}{0pt}
$247H_1$ & $\left(0^3,12,13,14+24-35,25+34\right)$ & $\langle e_6,e_7\rangle$ \\
\specialrule{1pt}{0pt}{0pt}
$247I$ & $\left(0^3,12,13,25+34,35\right)$ & $\langle e_6,e_7\rangle$ \\
\specialrule{1pt}{0pt}{0pt}
$247J$ & $\left(0^3,12,13,15+35,25+34\right)$ & $\langle e_6,e_7\rangle$ \\
\specialrule{1pt}{0pt}{0pt}
$247K$ & $\left(0^3,12,13,14+35,25+34\right)$ & $\langle e_6,e_7\rangle$ \\
\specialrule{1pt}{0pt}{0pt}
$247L$ & $\left(0^3,12,13,14+23,15\right)$ & $\langle e_6,e_7\rangle$ \\
\specialrule{1pt}{0pt}{0pt}
$247M$ & $\left(0^3,12,13,14+23,35\right)$ & $\langle e_6,e_7\rangle$ \\
\specialrule{1pt}{0pt}{0pt}
$247N$ & $\left(0^3,12,13,15+24,23\right)$ & $\langle e_6,e_7\rangle$ \\
\specialrule{1pt}{0pt}{0pt}
$247O$ & $\left(0^3,12,13,14+35,15+23\right)$ & $\langle e_6,e_7\rangle$\\
\specialrule{1pt}{0pt}{0pt}
$247P$ & $\left(0^3,12,13,23,25+34\right)$ & $\langle e_6,e_7\rangle$\\
\bottomrule[1pt]
\end{tabu}}
\end{table}
}


{\footnotesize 
\begin{table}[H]
\centering
\caption{7-dimensional indecomposable 3-step nilpotent Lie algebras $-$ 2}\label{7d-ind-3step-2}
\vspace{0.25 cm}
{\tabulinesep=1.2mm
\begin{tabu}{l|c|c}
\toprule[1.5pt]
NLA & structure equations & center\\
\specialrule{1pt}{0pt}{0pt}
$247P_1$ & $\left(0^3,12,13,23,24+35\right)$ & $\langle e_6,e_7\rangle$ \\
\specialrule{1pt}{0pt}{0pt}
$247Q$ & $\left(0^3,12,13,14+23,25+34\right)$ & $\langle e_6,e_7\rangle$\\
\specialrule{1pt}{0pt}{0pt}
$247R$ & $\left(0^3,12,13,14+15+23,25+34\right)$ & $\langle e_6,e_7\rangle$\\
\specialrule{1pt}{0pt}{0pt}
$247R_1$ & $\left(0^3,12,13,14+23,24+35\right)$ & $\langle e_6,e_7\rangle$\\
\specialrule{1pt}{0pt}{0pt}
$257A$ & $\left(0^2,12,0,0,13+24,15\right)$ & $\langle e_6,e_7\rangle$\\
\specialrule{1pt}{0pt}{0pt}
$257B$ & $\left(0^2,12,0,0,13,14+25\right)$ & $\langle e_6,e_7\rangle$\\
\specialrule{1pt}{0pt}{0pt}
$257C$ & $\left(0^2,12,0,0,13+24,25\right)$ & $\langle e_6,e_7\rangle$ \\
\specialrule{1pt}{0pt}{0pt}
$257D$ & $\left(0^2,12,0,0,13+24,14+25\right)$ & $\langle e_6,e_7\rangle$ \\
\specialrule{1pt}{0pt}{0pt}
$257E$ & $\left(0^2,12,0,0,13+45,24\right)$ & $\langle e_6,e_7\rangle$ \\
\specialrule{1pt}{0pt}{0pt}
$257F$ & $\left(0^2,12,0,0,23+45,24\right)$ & $\langle e_6,e_7\rangle$ \\
\specialrule{1pt}{0pt}{0pt}
$257G$ & $\left(0^2,12,0,0,13+45,15+24\right)$ & $\langle e_6,e_7\rangle$ \\
\specialrule{1pt}{0pt}{0pt}
$257H$ & $\left(0^2,12,0,0,13+24,45\right)$ & $\langle e_6,e_7\rangle$ \\
\specialrule{1pt}{0pt}{0pt}
$257I$ & $\left(0^2,12,0,0,13+14,15+23\right)$ & $\langle e_6,e_7\rangle$ \\
\specialrule{1pt}{0pt}{0pt}
$257J$ & $\left(0^2,12,0,0,13+24,15+23\right)$ & $\langle e_6,e_7\rangle$ \\
\specialrule{1pt}{0pt}{0pt}
$257J_1$ & $\left(0^2,12,0,0,13+14+25,15+23\right)$ & $\langle e_6,e_7\rangle$ \\
\specialrule{1pt}{0pt}{0pt}
$257K$ & $\left(0^2,12,0,0,13,23+45\right)$ & $\langle e_6,e_7\rangle$ \\
\specialrule{1pt}{0pt}{0pt}
$257L$ & $\left(0^2,12,0,0,13+24,23+45\right)$ & $\langle e_6,e_7\rangle$ \\
\specialrule{1pt}{0pt}{0pt}
$357A$ & $\left(0^2,12,0,13,24,14\right)$ & $\langle e_5,e_6,e_7\rangle$\\
\specialrule{1pt}{0pt}{0pt}
$357B$ & $\left(0^2,12,0,13,23,14\right)$ & $\langle e_5,e_6,e_7\rangle$ \\
\specialrule{1pt}{0pt}{0pt}
$357C$ & $\left(0^2,12,0,13+24,23,14\right)$ & $\langle e_5,e_6,e_7\rangle$ \\
\bottomrule[1pt]
\end{tabu}}
\end{table}
}

{\footnotesize 
\begin{table}[H]
\centering
\caption{7-dimensional indecomposable 4-step nilpotent Lie algebras $-$ 1}\label{7d-ind-4step-1}
\vspace{0.25 cm}
{\tabulinesep=1.2mm
\begin{tabu}{l|c|c}
\toprule[1.5pt]
NLA & structure equations & center\\
\specialrule{1pt}{0pt}{0pt}
$1357A$ & $\left(0^3,12,14+23,0,15+26-34\right)$ & $\langle e_7\rangle$
\\
\specialrule{1pt}{0pt}{0pt}
$1357B$ & $\left(0^3,12,14+23,0,15-34+36\right)$ & $\langle e_7\rangle$\\
\specialrule{1pt}{0pt}{0pt}
$1357C$ & $\left(0^3,12,14+23,0,15+24-34+36\right)$ & $\langle e_7\rangle$\\
\specialrule{1pt}{0pt}{0pt}
$1357D$ & $\left(0^2,12,0,23,24,16+25+34\right)$ & $\langle e_7\rangle$\\
\specialrule{1pt}{0pt}{0pt}
$1357E$ & $\left(0^2,12,0,23,24,25+46\right)$ & $\langle e_7\rangle$\\
\specialrule{1pt}{0pt}{0pt}
$1357F$ & $\left(0^2,12,0,23,24,13+25-46\right)$ & $\langle e_7\rangle$\\
\bottomrule[1pt]
\end{tabu}}
\end{table}
}

{\footnotesize 
\begin{table}[H]
\centering
\caption{7-dimensional indecomposable 4-step nilpotent Lie algebras $-$ 2}\label{7d-ind-4step-2}
\vspace{0.25 cm}
{\tabulinesep=1.2mm
\begin{tabu}{l|c|c}
\toprule[1.5pt]
NLA & structure equations & center\\
\specialrule{1pt}{0pt}{0pt}
$1357F_1$ & $\left(0^2,12,0,23,24,13+25+46\right)$ & $\langle e_7\rangle$\\
\specialrule{1pt}{0pt}{0pt}
$1357G$ & $\left(0^2,12,0,23,14,16+25\right)$ & $\langle e_7\rangle$\\
\specialrule{1pt}{0pt}{0pt}
$1357H$ & $\left(0^2,12,0,23,14,16+25+26-34\right)$ & $\langle e_7\rangle$\\
\specialrule{1pt}{0pt}{0pt}
$1357I$ & $\left(0^2,12,0,23,14,25+46\right)$ & $\langle e_7\rangle$\\
\specialrule{1pt}{0pt}{0pt}
$1357J$ & $\left(0^2,12,0,23,14,13+25+46\right)$ & $\langle e_7\rangle$\\
\specialrule{1pt}{0pt}{0pt}
$1357L$ & $\left(0^2,12,0,13+24,14,15+23+\frac{1}{2}\cdot26+\frac{1}{2}\cdot34\right)$ & $\langle e_7\rangle$\\
\specialrule{1pt}{0pt}{0pt}
$1357M(\lambda)$ & $\left(0^2,12,0,13+24,14,15+\lambda\cdot26+(1-\lambda)\cdot34\right)$, $\lambda\neq 0$ & $\langle e_7\rangle$\\
\specialrule{1pt}{0pt}{0pt}
$1357N(\lambda)$ & $\left(0^2,12,0,13+24,14,15+\lambda\cdot23+34+46\right)$ & $\langle e_7\rangle$\\
\specialrule{1pt}{0pt}{0pt}
$1357O$ & $\left(0^2,12,0,13+24,23,16+25\right)$ & $\langle e_7\rangle$\\
\specialrule{1pt}{0pt}{0pt}
$1357P$ & $\left(0^2,12,0,13+24,23,15+26+34\right)$ & $\langle e_7\rangle$\\
\specialrule{1pt}{0pt}{0pt}
$1357P_1$ & $\left(0^2,12,0,13+24,23,15-26+34\right)$ & $\langle e_7\rangle$\\
\specialrule{1pt}{0pt}{0pt}
$1357Q$ & $\left(0^2,12,0,13,23+24,15+26\right)$ & $\langle e_7\rangle$\\
\specialrule{1pt}{0pt}{0pt}
$1357Q_1$ & $\left(0^2,12,0,13,23+24,15-26\right)$ & $\langle e_7\rangle$\\
\specialrule{1pt}{0pt}{0pt}
$1357QRS_1(\lambda)$ & $\left(0^2,12,0,13+24,14-23,15+\lambda\cdot26+(1-\lambda)\cdot34\right)$, $\lambda\neq 0$ & $\langle e_7\rangle$\\
\specialrule{1pt}{0pt}{0pt}
$1357R$ & $\left(0^2,12,0,13,23+24,16+25+34\right)$ & $\langle e_7\rangle$\\
\specialrule{1pt}{0pt}{0pt}
$1357S(\lambda)$ & $\left(0^2,12,0,13,23+24,15+16+25+\lambda\cdot 26+34\right)$, $\lambda\neq 1$ & $\langle e_7\rangle$\\
\specialrule{1pt}{0pt}{0pt}
$1457A$ & $\left(0^2,12,13,0,0,14+56\right)$ & $\langle e_7\rangle$\\
\specialrule{1pt}{0pt}{0pt}
$1457B$ & $\left(0^2,12,13,0,0,14+23+56\right)$ & $\langle e_7\rangle$\\
\specialrule{1pt}{0pt}{0pt}
$2357A$ & $\left(0^3,12,14+23,23,15-34\right)$ & $\langle e_6,e_7\rangle$\\
\specialrule{1pt}{0pt}{0pt}
$2357B$ & $\left(0^3,12,14+23,13,15-34\right)$ & $\langle e_6,e_7\rangle$\\
\specialrule{1pt}{0pt}{0pt}
$2357C$ & $\left(0^3,12,14+23,24,15-34\right)$ & $\langle e_6,e_7\rangle$\\
\specialrule{1pt}{0pt}{0pt}
$2357D$ & $\left(0^3,12,14+23,13+24,15-34\right)$ & $\langle e_6,e_7\rangle$\\
\specialrule{1pt}{0pt}{0pt}
$2357D_1$ & $\left(0^3,12,14+23,13-24,15-34\right)$ & $\langle e_6,e_7\rangle$\\
\specialrule{1pt}{0pt}{0pt}
$2457A$ & $\left(0^2,12,13,0,14,15\right)$ & $\langle e_6,e_7\rangle$\\
\specialrule{1pt}{0pt}{0pt}
$2457B$ & $\left(0^2,12,13,0,25,14\right)$ & $\langle e_6,e_7\rangle$\\
\specialrule{1pt}{0pt}{0pt}
$2457C$ & $\left(0^2,12,13,0,14+25,15\right)$ & $\langle e_6,e_7\rangle$\\
\specialrule{1pt}{0pt}{0pt}
$2457D$ & $\left(0^2,12,13,0,14+23+25,15\right)$ & $\langle e_6,e_7\rangle$\\
\specialrule{1pt}{0pt}{0pt}
$2457E$ & $\left(0^2,12,13,0,23+25,14\right)$ & $\langle e_6,e_7\rangle$\\
\specialrule{1pt}{0pt}{0pt}
$2457F$ & $\left(0^2,12,13,0,14+23,15\right)$ & $\langle e_6,e_7\rangle$\\
\specialrule{1pt}{0pt}{0pt}
$2457G$ & $\left(0^2,12,13,0,15+23,14\right)$ & $\langle e_6,e_7\rangle$\\
\specialrule{1pt}{0pt}{0pt}
$2457H$ & $\left(0^2,12,13,0,23,14+25\right)$ & $\langle e_6,e_7\rangle$\\
\specialrule{1pt}{0pt}{0pt}
$2457I$ & $\left(0^2,12,13,0,14+23,25\right)$ & $\langle e_6,e_7\rangle$\\
\bottomrule[1pt]
\end{tabu}}
\end{table}
}

{\footnotesize 
\begin{table}[H]
\centering
\caption{7-dimensional indecomposable 4-step nilpotent Lie algebras $-$ 3}\label{7d-ind-4step-3}
\vspace{0.25 cm}
{\tabulinesep=1.2mm
\begin{tabu}{l|c|c}
\toprule[1.5pt]
NLA & structure equations & center\\
\specialrule{1pt}{0pt}{0pt}
$2457I$ & $\left(0^2,12,13,0,14+23,25\right)$ & $\langle e_6,e_7\rangle$\\
\specialrule{1pt}{0pt}{0pt}
$2457J$ & $\left(0^2,12,13,0,14+23,23+25\right)$ & $\langle e_6,e_7\rangle$\\
\specialrule{1pt}{0pt}{0pt}
$2457K$ & $\left(0^2,12,13,0,15+23,14+25\right)$ & $\langle e_6,e_7\rangle$\\
\specialrule{1pt}{0pt}{0pt}
$2457L$ & $\left(0^2,12,13,23,14+25,15+24\right)$ & $\langle e_6,e_7\rangle$\\
\specialrule{1pt}{0pt}{0pt}
$2457L_1$ & $\left(0^2,12,13,23,14-25,15+24\right)$ & $\langle e_6,e_7\rangle$\\
\specialrule{1pt}{0pt}{0pt}
$2457M$ & $\left(0^2,12,13,23,15+24,14\right)$ & $\langle e_6,e_7\rangle$\\
\bottomrule[1pt]
\end{tabu}}
\end{table}
}

\bibliographystyle{plain}
\bibliography{bibliografia} 

\end{document}